\theoremstyle{plain}
\newtheorem{theorem}{Theorem}[section]
\newtheorem*{theorem*}{Theorem}
\newtheorem{prop}[theorem]{Proposition}
\newtheorem{cor}[theorem]{cor}
\newtheorem{lemma}[theorem]{Lemma}
\newtheorem*{lemma*}{Lemma}
\theoremstyle{definition}
\newtheorem{definition}[theorem]{Definition}
\theoremstyle{remark}
\newtheorem{remark}[theorem]{Remark}
\newcommand{\bi}{\begin{itemize}}
\newcommand{\iii}{\item}
\newcommand{\ei}{\end{itemize}}
\newcommand{\bd}{\begin{description}}
\newcommand{\ed}{\end{description}}
\newcommand{\bdeff}{\begin{definition}}
\newcommand{\edeff}{\end{definition}}
\newcommand{\bc}{\begin{cor}}
\newcommand{\ec}{\end{cor}}
\newcommand{\bl}{\begin{lemma}}
\newcommand{\el}{\end{lemma}}
\newcommand{\bp}{\begin{prop}}
\newcommand{\ep}{\end{prop}}
\newcommand{\bqn}{\begin{eqnarray}}
\newcommand{\eqn}{\end{eqnarray}}
\newcommand{\la}{\langle}
\newcommand{\ra}{\rangle}
\newcommand{\g}{\gamma}
\newcommand{\eps}{\varepsilon}			
\newcommand{\lam}{\lambda}
\newcommand{\wt}[1]{\widetilde{#1}}
\newcommand{\mc}[1]{\mathcal{#1}}
\newcommand{\distr}{\mathcal{D}}				% distribuzione
\newcommand{\ver}{\mathcal{V}}					% verticale
\newcommand{\J}{\mathcal{J}}					% jacopo
\newcommand{\R}{\mathbb{R}}						% reals
\newcommand{\DD}{\mathcal{F}}					% flag of geodesic
\newcommand{\tanf}{\mathsf{T}}					% horizontal extension
\newcommand{\y}{D}								% Young diagram
\DeclareMathOperator{\spn}{\mathrm{span}}		% span
\newcommand{\bbI}{{\mathbb{I}}}
\newcommand{\bbN}{{\mathbb{N}}}
\newcommand{\bbR}{{\mathbb{R}}}
\newcommand{\bbu}{{\mathbb{U}}}
\newcommand{\tr}{{\mathrm {tr}}}
\newcommand{\diver}{{\mathrm {div}}}
\newcommand{\calD}{{\mathcal{D}}}
\newcommand{\calF}{{\mathcal{F}}}
\newcommand{\calI}{{\mathcal{I}}}
\newcommand{\calL}{{\mathcal{L}}}
\newcommand{\calN}{{\mathcal{N}}}
\newcommand{\calQ}{{\mathcal{Q}}}
\newcommand{\calR}{{\mathcal{R}}}
\newcommand{\V}{{\mathcal{V}}}
\newcommand{\tx}[1]{\mathrm{#1}}
\newcommand{\vol}{\mathrm{vol}}
\newcommand{\ad}{\tx{ad}}
\numberwithin{equation}{section}
\author{Andrei A. Agrachev}
\address{SISSA, Via Bonomea 265, Trieste, Italy \& Steklov Math. Inst., Moscow}
\email{agrachev@sissa.it}
\author{Davide Barilari}
\address{Univ. Paris Diderot, IMJ-PRG, CNRS, Sorbonne Universit\'e.
Campus des Grands Moulins, bât. Sophie-Germain, case 7012, 75205 Paris,France}
 \email{davide.barilari@imj-prg.fr}
\author{Elisa Paoli}
\address{SISSA, Via Bonomea 265, Trieste, Italy}
\email{epaoli@sissa.it}
\date{\today}
\title{Volume geodesic distortion and Ricci curvature for Hamiltonian dynamics}
\begin{document} 
	
	\begin{abstract}
		We study the variation of a smooth volume form along extremals of a variational problem with nonholonomic constraints and an action-like Lagrangian. We introduce a new invariant, called volume geodesic derivative, describing the interaction of the volume with the dynamics and we study its basic properties. We then show how this invariant, together with curvature-like invariants of the dynamics introduced in \cite{curvature}, appear in the asymptotic expansion of the volume. This generalizes the well-known expansion of the Riemannian volume in terms of Ricci curvature to a wide class of Hamiltonian flows, including all sub-Riemannian geodesic flows.
	\end{abstract}
	
	\maketitle
	
	\setcounter{tocdepth}{1}
	\tableofcontents
	
	\newcommand{\contr}{\Omega^{T}_{x_{0}}}
	\newcommand{\K}{\mathcal{K}}

\maketitle

\section{Introduction}

One of the possible ways of introducing Ricci curvature in Riemannian geometry is by computing  the variation of the Riemannian volume under the geodesic flow. 

Given a point $x$ on a Riemannian manifold $(M,g)$ and a tangent unit vector $v\in T_xM$, it is well-known that the asymptotic expansion of the Riemannian volume $\vol_{g}$
%\bqn
%=\sqrt{\det g_{ij}} dx_{1}\ldots dx_{n},
%\eqn
in the direction of $v$ depends on the Ricci curvature at $x$. 
More precisely, let us consider a geodesic $\g(t)=\exp_x(tv)$ starting at $x$ with initial tangent vector $v$. In normal coordinates,  the volume element is written as $\vol_{g}=\sqrt{\det g_{ij}} dx_{1}\ldots dx_{n}$ and satisfies the following expansion for $t\to 0$ 
\begin{equation}
\sqrt{\det g_{ij}(\exp_x(tv))}=1-\frac{1}{6}\mathrm{Ric}^{g}(v,v)t^2+O(t^3),
\label{eq:volume R}
\end{equation}
where $\mathrm{Ric}^{g}$ is the Ricci curvature tensor associated with $g$ (see for instance \cite[Chapter 3]{GHL} or \cite[Chapter 14]{V}). 

The left hand side of \eqref{eq:volume R} has a clear geometric interpretation. Indeed, fix an orthonormal basis $e_1,\ldots,e_n$ in $T_xM$ and let 
$$\left.\partial_i\right|_{\g(t)}:=(d_{tv}\exp_x)(e_i)=\left.\frac{\partial}{\partial s}\right|_{s=0}\exp_x(tv+s e_i), \qquad1\leq i\leq n,$$ 
be the image of $e_i$ through the differential
%\footnote{here and in what follows we implicitly use the identification of a vector space with its tangent space at a point.} 
of the Riemannian exponential map $\exp_x:T_{x}M\to M$ at $tv$. Once we take a set of normal coordinates centered at $x$, the vector fields $\partial_{i}$ are the coordinate vector fields at $\gamma(t)$. Then the left hand side of \eqref{eq:volume R} measures the Riemannian volume of the parallelotope with edges $\partial_i$ at the point $\g(t)$, more explicitly
$$\sqrt{\det g_{ij}(\g(t))}=\mathrm{vol}_{g}\left(\bigwedge_{i=1}^n \left.\partial_i\right|_{\g(t)}\right).$$

%\textcolor{gray}{The left hand side of Eq.\ \eqref{eq:volume R} measures the Riemannian volume of the parallelotope with edges $\partial_i$ at the point $\g(t)$, more explicitly
%$$\mathrm{vol}_{g}\left(\bigwedge_{i=1}^n \left.\partial_i\right|_{\g(t)}\right)=\sqrt{\det g_{ij}(\g(t))}.$$}

The purpose of this paper is to study the variation of a smooth volume form along extremals of a variational problem with nonholonomic constraints and an action-like Lagrangian. To this aim, let us first consider the case of a weighted Riemannian 
manifold $(M,g,\mu)$ endowed with a smooth volume $\mu=e^{\psi}\vol_g$, where $\psi$ is a smooth function on $M$. 
Let $\exp_x(t,v)$ denote the exponential map defined at time $t$ starting from $x$, i.e., set $\exp_x(t,v):=\exp_x(tv)$. Then
\begin{equation}
\left(d_v\exp_x(t,\cdot)\right)(e_i)=\left.\frac{\partial}{\partial s}\right|_{s=0}\exp_x(t(v+s e_i))=t\left.\partial_i\right|_{\g(t)}.
\label{eq:exp2}
\end{equation}
The volume of the parallelotope $Q_t$ with edges $t\left.\partial_i\right|_{\g(t)}$ has the following expansion for $t\to 0$, \begin{equation}
\mu\left(Q_{t}\right)=t^n e^{\psi(\g(t))}\left(1-\frac{1}{6}\mathrm{Ric}^{g}(v,v)t^2+O(t^3)\right),
\label{eq:volume R3}
\end{equation}
as a direct consequence of \eqref{eq:volume R} (see also Figure \ref{vol Riem}). By writing $$\psi(\g(t))=\psi(x)+\int_{0}^{t}g( \nabla \psi(\gamma(s)),\dot \gamma(s)) \,ds$$ we reduce the previous identity to tensorial quantities as follows
\begin{equation}
\mu\left(Q_{t}\right)=c_{0}t^n e^{\int_{0}^{t}\rho(\dot\g(s))ds}\left(1-\frac{1}{6}\mathrm{Ric}^{g}(v,v)t^2+O(t^3)\right),
\label{eq:volumeR5}
\end{equation}
where we defined $\rho(w)=g(\nabla\psi(x),w)$ for every $w\in T_{x}M$ and $c_{0}=e^{\psi(x_{0})}$.

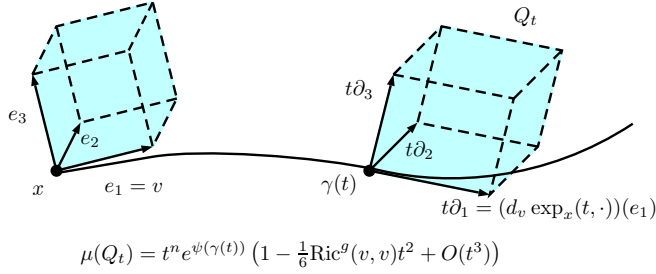
\begin{figure}[t]%
\begin{center}
\scalebox{0.8} % Change this value to rescale the drawing.
{
\begin{pspicture}(0,-2.8)(11.59,2.8)
\definecolor{colour0}{rgb}{0.627451,1.0,1.0}
\pscustom[linecolor=black, linewidth=0.04]
{
\newpath
\moveto(4.4,-3.6)
}
\pscustom[linecolor=black, linewidth=0.04]
{
\newpath
\moveto(8.4,-2.8)
}
\pscustom[linecolor=black, linewidth=0.04]
{
\newpath
\moveto(9.6,-3.6)
}
\pscustom[linecolor=black, linewidth=0.04]
{
\newpath
\moveto(12.8,-4.8)
}
\pspolygon[linecolor=colour0, linewidth=0.04, strokeopacity=0.6392157, fillstyle=solid,fillcolor=colour0, opacity=0.6392157](8.0,1.2)(7.2,0.4)(6.8,-1.2)(8.8,-1.6)(9.6,-0.8)(10.0,0.8)
\pspolygon[linecolor=colour0, linewidth=0.04, strokeopacity=0.6392157, fillstyle=solid,fillcolor=colour0, opacity=0.6392157](1.6,1.2)(1.2,0.4)(1.6,-1.2)(3.2,-0.8)(3.6,0.0)(3.2,1.6)(3.2,1.6)
\rput{0.69267696}(-0.012355353,-0.07737066){\pscustom[linecolor=black, linewidth=0.04]
{
\newpath
\moveto(1.602345,-1.181362)
\lineto(3.195185,-0.9306427)
\curveto(3.9916053,-0.8052832)(5.896999,-0.94483304)(7.005973,-1.2097428)
\curveto(8.114946,-1.4746524)(9.714139,-1.423832)(11.184796,-0.47664222)
}}
\psdots[linecolor=black, dotsize=0.2](1.6,-1.2)
\psline[linecolor=black, linewidth=0.04, arrowsize=0.05291666666666668cm 2.0,arrowlength=1.4,arrowinset=0.0]{->}(1.6,-1.2)(3.2,-0.8)
\psline[linecolor=black, linewidth=0.04, arrowsize=0.05291666666666668cm 2.0,arrowlength=1.4,arrowinset=0.0]{->}(1.6,-1.2)(2.0,-0.4)
\psline[linecolor=black, linewidth=0.04, arrowsize=0.05291666666666668cm 2.0,arrowlength=1.4,arrowinset=0.0]{->}(1.6,-1.2)(1.2,0.4)
\rput{3.3891926}(-0.09249012,-0.0737401){\rput[bl](1.2,-1.6){$x$}}
\rput[bl](2.4,-1.6){$e_1=v$}
\rput[bl](2.0,-0.8){$e_2$}
\rput[bl](0.8,-0.4){$e_3$}
\rput[bl](9.2,1.2){$Q_t$}
\rput[bl](8.0,-2.0){$t\partial_1=(d_v\exp_x(t,\cdot))(e_1)$}
\rput[bl](6.4,0.0){$t\partial_3$}
\rput[bl](7.4,-1.0){$t\partial_2$}
\rput[bl](6.0,-1.6){$\gamma(t)$}
\rput[bl](2.0,-2.8){$\mu(Q_t)=t^n e^{\psi(\gamma(t))} \left(1-\frac{1}{6}\mathrm{Ric}^{g}(v,v)t^2+O(t^3)\right)$}
\psline[linecolor=black, linewidth=0.04, linestyle=dashed, dash=0.17638889cm 0.10583334cm](1.2,0.4)(2.8,0.8)(3.2,-0.8)(3.6,0.0)(3.2,1.6)(1.6,1.2)(1.2,0.4)(1.2,0.4)
\psline[linecolor=black, linewidth=0.04, linestyle=dashed, dash=0.17638889cm 0.10583334cm](1.6,1.2)(2.0,-0.4)(3.6,0.0)
\psline[linecolor=black, linewidth=0.04, linestyle=dashed, dash=0.17638889cm 0.10583334cm](3.2,1.6)(2.8,0.8)
\psdots[linecolor=black, dotsize=0.2](6.8,-1.2)
\psline[linecolor=black, linewidth=0.04, arrowsize=0.05291666666666668cm 2.0,arrowlength=1.4,arrowinset=0.0]{->}(6.8,-1.2)(8.8,-1.6)
\psline[linecolor=black, linewidth=0.04, arrowsize=0.05291666666666668cm 2.0,arrowlength=1.4,arrowinset=0.0]{->}(6.8,-1.2)(7.6,-0.4)
\psline[linecolor=black, linewidth=0.04, arrowsize=0.05291666666666668cm 2.0,arrowlength=1.4,arrowinset=0.0]{->}(6.8,-1.2)(7.2,0.4)
\psline[linecolor=black, linewidth=0.04, linestyle=dashed, dash=0.17638889cm 0.10583334cm](7.2,0.4)(8.0,1.2)(10.0,0.8)(9.6,-0.8)(8.8,-1.6)(9.2,0.0)(7.2,0.4)
\psline[linecolor=black, linewidth=0.04, linestyle=dashed, dash=0.17638889cm 0.10583334cm](8.0,1.2)(7.6,-0.4)(9.6,-0.8)(9.6,-0.8)
\psline[linecolor=black, linewidth=0.04, linestyle=dashed, dash=0.17638889cm 0.10583334cm](9.2,0.0)(10.0,0.8)(10.0,0.8)
\end{pspicture}
}
\caption{Volume distortion on a weighted Riemannian manifold with volume $\mu=e^{\psi}\vol_g$}%
\label{vol Riem}%
\end{center}
\end{figure}

To understand the general case, it is convenient to reinterpret the last variation of volume from an Hamiltonian viewpoint. Indeed the Riemannian exponential map on $M$ can be written in terms of the Hamiltonian flow associated with the smooth function $H:T^{*}M\to \R$ given in coordinates by
$$H(p,x)=\frac12\sum_{i,j=1}^{n}g^{ij}(x)p_{i}p_{j},\qquad (p,x)\in T^{*}M,$$
where $g^{ij}$ is the inverse matrix of the metric $g$. More geometrically, the Riemannian metric $g$ induces a canonical linear isomorphism $\mathfrak{i}:T_{x}M\to T_{x}^{*}M$ between each tangent space $T_xM$ and its dual $T_x^*M$. The function $H$ is then (one half of the square of) the cometric, i.e., the metric $g$ read as a function on covectors. If $\lambda=\mathfrak{i}(v)$ denotes the element in $T_x^*M$ corresponding to $v\in T_{x}M$ under the above isomorphism, the exponential map satisfies
\begin{equation}
%\exp_x(tv)=
\gamma(t)=\exp_x(t,v)=\pi( e^{t\vec{H}}(\lambda)),
\label{eq:exp}
\end{equation}
where $\pi:T^{*}M\to M$ is the canonical projection and $\vec{H}$ is the Hamiltonian vector field on $T^{*}M$ associated with $H$, whose coordinate expression is
\begin{equation}
\vec{H}=\sum_{j=1}^{n}\frac{\partial H}{\partial p_{j}}\frac{\partial }{\partial x_{j}}-\frac{\partial H}{\partial x_{j}}\frac{\partial }{\partial p_{j}}.
\end{equation}
Denote now $E_i:=\mathfrak{i}(e_{i})$ the frame of cotangent vectors in $T_x^*M$ associated with the orthonormal frame $\{e_{i}\}_{i=1}^{n}$ of $T_{x}M$. Then, combining \eqref{eq:exp2} and \eqref{eq:exp}, we have $t\partial_i= (\pi\circ e^{t\vec{H}})_*E_i$ and the left hand side of \eqref{eq:volume R3} can be written as
\begin{equation}
\begin{split}
\mathrm{\mu}(Q_t)&=\langle\mu_{\g(t)},(t\partial_1,\ldots,t\partial_n)\rangle\\
&=\langle\mu_{\pi( e^{t\vec{H}}(\lambda))},\left((\pi\circ e^{t\vec{H}})_*E_1,\ldots,(\pi\circ e^{t\vec{H}})_*E_n\right)\rangle\\
&=\langle (\pi\circ e^{t\vec{H}})^*\mu|_{\lambda},(E_1,\ldots,E_n)\rangle.
%&=t^n\left(1-\frac{1}{6}\mathrm{Ric}(v,v)t^2+O(t^3)\right) \qquad \mbox{ for }t\searrow 0,
\label{eq:volume cotangent}
\end{split}
\end{equation}
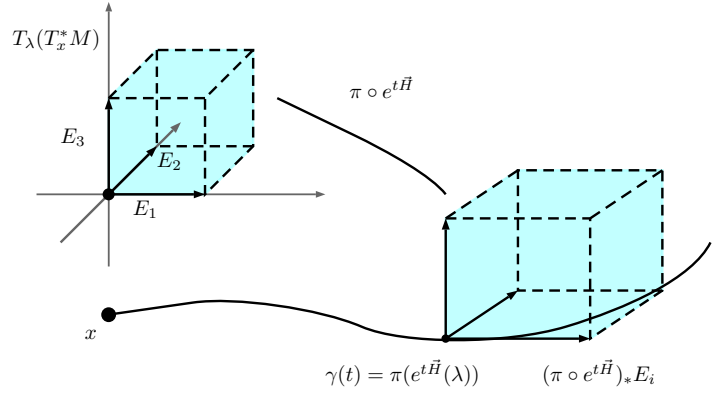
\begin{figure}[t]%
\scalebox{0.8}
{
\begin{pspicture}(0,-3.8)(12.017892,3.8)
\definecolor{colour0}{rgb}{0.627451,1.0,1.0}
\definecolor{colour1}{rgb}{0.4,0.4,0.4}
\pspolygon[linecolor=colour0, linewidth=0.04, strokeopacity=0.6392157, fillstyle=solid,fillcolor=colour0, opacity=0.6392157](7.6,-1.0)(8.8,-0.2)(11.2,-0.2)(11.2,-2.2)(10.0,-3.0)(7.6,-3.0)
\pspolygon[linecolor=colour0, linewidth=0.04, strokeopacity=0.6392157, fillstyle=solid,fillcolor=colour0, opacity=0.6392157](2.0,1.0)(2.0,-0.6)(3.6,-0.6)(4.4,0.2)(4.4,1.8)(2.8,1.8)(2.8,1.8)
\psline[linecolor=colour1, linewidth=0.03, arrowsize=0.05291666666666668cm 2.0,arrowlength=1.4,arrowinset=0.0]{->}(0.8,-0.6)(5.6,-0.6)
\psline[linecolor=colour1, linewidth=0.03, arrowsize=0.05291666666666668cm 2.0,arrowlength=1.4,arrowinset=0.0]{->}(2.0,-1.8)(2.0,2.6)
\psline[linecolor=colour1, linewidth=0.04, arrowsize=0.05291666666666668cm 2.0,arrowlength=1.4,arrowinset=0.0]{->}(1.2,-1.4)(3.2,0.6)
\psline[linecolor=black, linewidth=0.04, arrowsize=0.05291666666666668cm 2.0,arrowlength=1.4,arrowinset=0.0]{->}(2.0,-0.6)(3.6,-0.6)
\psline[linecolor=black, linewidth=0.04, arrowsize=0.05291666666666668cm 2.0,arrowlength=1.4,arrowinset=0.0]{->}(2.0,-0.6)(2.8,0.2)
\psline[linecolor=black, linewidth=0.04, arrowsize=0.05291666666666668cm 2.0,arrowlength=1.4,arrowinset=0.0]{->}(2.0,-0.6)(2.0,1.0)
\psline[linecolor=black, linewidth=0.04, linestyle=dashed, dash=0.17638889cm 0.10583334cm](2.0,1.0)(2.8,1.8)(4.4,1.8)(4.4,0.2)(3.6,-0.6)(3.6,1.0)(4.4,1.8)(4.4,1.8)
\psline[linecolor=black, linewidth=0.04, linestyle=dashed, dash=0.17638889cm 0.10583334cm](3.6,1.0)(2.0,1.0)(2.0,1.0)
\psline[linecolor=black, linewidth=0.04, linestyle=dashed, dash=0.17638889cm 0.10583334cm](2.8,1.8)(2.8,0.2)(4.4,0.2)(4.4,0.2)
\psdots[linecolor=black, dotsize=0.2](2.0,-0.6)

\rput[bl](0.4,1.8){$T_{\lambda}(T^*_{x}M)$}
\rput[bl](2.4,-1.0){$E_1$}
\rput[bl](2.8,-0.2){$E_2$}
\rput[bl](1.2,0.2){$E_3$}
\rput[bl](6.0,1.0){$\pi\circ e^{t\vec{H}}$}
\rput[bl](9.2,-3.8){$(\pi\circ e^{t\vec{H}})_*E_i$}
\psdots[linecolor=black, dotsize=0.14](7.6,-3.0)
\rput[bl](5.6,-3.8){$\gamma(t)=\pi (e^{t\vec{H}}(\lambda))$}
\rput[bl](1.6,-3.0){$x$}
\psline[linecolor=black, linewidth=0.04, linestyle=dashed, dash=0.17638889cm 0.10583334cm](7.6,-1.0)(10.0,-1.0)(10.0,-3.0)(11.2,-2.2)(11.2,-0.2)(8.8,-0.2)(8.8,-2.2)(11.2,-2.2)
\psline[linecolor=black, linewidth=0.04, arrowsize=0.05291666666666668cm 2.0,arrowlength=1.4,arrowinset=0.0]{->}(7.6,-3.0)(8.8,-2.2)
\psline[linecolor=black, linewidth=0.04, arrowsize=0.05291666666666668cm 2.0,arrowlength=1.4,arrowinset=0.0]{->}(7.6,-3.0)(7.6,-1.0)
\pscustom[linecolor=black, linewidth=0.04]
{
\newpath
\moveto(2.0,-2.6)
\lineto(3.4,-2.4)
\curveto(4.1,-2.3)(5.5,-2.5)(6.2,-2.8)
\curveto(6.9,-3.1)(8.6,-3.1)(9.6,-2.8)
\curveto(10.6,-2.5)(11.7,-2.0)(12.0,-1.4)
}
\psdots[linecolor=black, dotsize=0.24641976](2.0,-2.6)
\psline[linecolor=black, linewidth=0.04, linestyle=dashed, dash=0.17638889cm 0.10583334cm](8.8,-0.2)(7.6,-1.0)
\psline[linecolor=black, linewidth=0.04, linestyle=dashed, dash=0.17638889cm 0.10583334cm](11.2,-0.2)(10.0,-1.0)
\pscustom[linecolor=black, linewidth=0.04]
{
\newpath
\moveto(4.8,1.0)
\lineto(6.0444446,0.4)
\curveto(6.666667,0.1)(7.366667,-0.3)(7.6,-0.6)
}
\psline[linecolor=black, linewidth=0.04, arrowsize=0.05291666666666667cm 2.0,arrowlength=1.4,arrowinset=0.0]{->}(7.6,-3.0)(10.0,-3.0)
\end{pspicture}
}
\caption{Volume distortion under the Hamiltonian flow}%
\label{subRiem}%
\end{figure}

We stress that in the last formula $E_{i}$, which is an element of $T^{*}_{x}M$ is treated as a tangent vector to the fiber, i.e., an element of $T_{\lam}(T^{*}_{x}M)$ (see Figure \ref{subRiem}). Indeed the pull-back $(\pi \circ e^{t\vec{H}})^*\mu$ defines an $n$-form on $T^*M$, that has dimension $2n$, and the quantity that we compute is the restriction of this form to the $n$-dimensional vertical space $\V_{\lam}\simeq T_{\lam}(T^*_{x}M)$. 

To write a coordinate-independent formula, we compare this volume with the volume $\mu^{*}_\lam$ defined naturally on the fiber $\V_{\lam}$ by the restriction of $\mu$ at $x$. 
%\textcolor{Blue}{, i.e., $\mu_x^*(\mathfrak{i}(v_1),\ldots,\mathfrak{i}(v_n)):=\mu_x(v_1,\ldots,v_n)$ for any $n$-tuple of vectors in $T_xM$.} \textcolor{gray}{Indeed $\mu_{x}\in \Lambda^{n}(T_{x}M)$ and we define $\mu_{x}^{*}:=\theta(\mu_{x})$ through the canonical isomorphism $\theta:\Lambda^{n}(T_{x}M)\to \Lambda^{n}(T^{*}_{x}M)$.} 
Recall that given a smooth volume form $\mu$ on $M$ its value $\mu_{x}$ at a point is a nonzero element of the one-dimensional vector space $\Lambda^{n}(T_{x}M)$. We can associate with it the unique element $\mu_{x}^{*}$ in $\Lambda^{n}(T_{x}M)^{*}=\Lambda^{n}(T^{*}_{x}M)$ satisfying $\mu_{x}^{*}(\mu_{x})=1$. This defines a volume form on the fiber $T^{*}_{x}M$. By the canonical identification $T^{*}_{x}M\simeq T_{\lam}(T^{*}_{x}M)$ of a vector space with its tangent space to a point, this induces a volume form $\mu^{*}_{\lam}$ on $\V_{\lambda}$.

With this interpretation, the Riemannian asymptotics \eqref{eq:volumeR5} computes the asymptotics in $t$ of $(\pi\circ e^{t\vec{H}})^*\mu$ restricted to the fiber $\V_{\lam}$, with respect to the volume $\mu^{*}_\lambda$, i.e.,
\begin{equation}\label{eq:Riem33}
\left.(\pi\circ e^{t\vec{H}})^*\mu\right|_{\V_{\lambda}}=t^n\; e^{\int_0^t\rho(\dot \g(s))ds}\left(1-\frac{1}{6}\mathrm{Ric}^{g}(v,v)t^2+O(t^3)\right)\mu^{*}_\lambda.
\end{equation}
The constant $c_{0}$ appearing in \eqref{eq:volumeR5} is reabsorbed in the volume $\mu_\lambda^{*}$.

%Eq.\ \eqref{eq:Riem33} splits geometric properties of the variation of the volume, meaning those depending on curvature, with respect to those related to the interaction between the volume and the geometry, that are encoded in $\rho$. 

%It is not difficult to see that the function $\rho$ can also be characterized in an equivalent way as follows 
%\begin{equation}\label{eq:rho000}
%\rho(v)=(\mathrm{div}_{\mu}\tanf) |_x-(\mathrm{div}_{\vol_g}\tanf) |_x,
%\end{equation}
%where $\mathrm{div}_{\mu}X$ denotes the divergence of a vector field $X$ with respect to the volume $\mu$ and $\tanf$ denotes any smooth vector field such that $\tanf|_{\gamma(t)}=\dot \gamma(t)$.

%Indeed, we see that the order term $t^n$ depends only on the dimension of the manifold. The asymptotics in the brackets contains only geometric information, that depend on the metric $g$ on $M$. The constant term $e^{\psi(x)}$ depending on $\mu$ at the initial point is contained in the associated volume $\mu_\lambda$. Finally, the measure information is encoded in the exponential term. Indeed it represents the variation of $\mu$ along the geodesic and is equal to the exponential of $\int_0^t\psi'(\tau)d\tau=\int_0^t\la\mathrm{grad}\psi,\dot{\g}(\tau)\ra d\tau$. In particular, 

\begin{remark} As it follows from \eqref{eq:Riem33}, 
the quantity $\rho$  can be equivalently characterized as follows 
\begin{equation}\label{eq:rhoriem}
\rho(v)\mu^{*}_\lambda=\left.\frac{d}{dt}\right|_{t=0}\log\left(t^{-n}\left.(\pi\circ e^{t\vec{H}})^*\mu\right|_{\V_{\lam}}\right).
\end{equation}
where $\lambda=\mathfrak{i}(v)$. The last formula inspires indeed the definition of $\rho$ in the general case.
\end{remark} 

Identity \eqref{eq:Riem33} can be generalized to every Hamiltonian that is quadratic and convex on fibers, giving a suitable meaning to the terms in the right hand side.
More precisely, we consider Hamiltonians $H:T^{*}M\to \R$ of the following form 
\begin{equation}\label{eq:hamintro}
H(p,x)=\frac{1}{2}\sum_{i=1}^{k} \la p, X_{i}(x)\ra^{2}+\la p,X_0(x)\ra+\frac{1}{2}Q(x),\qquad (p,x)\in T^{*}M.
\end{equation}
where $X_{0},X_{1},\ldots,X_{k}$ are smooth vector fields on $M$ and $Q$ is a smooth function. We assume that 
\begin{itemize}
\iii[(H0)]
$X_1,\ldots,X_k$ are everywhere linearly independent,
\iii[(H1)] $\mathrm{Lie}\{(\ad \,X_{0})^{j}X_{i}\,|\, i=1,\ldots,k, j\geq 0\}\big|_{x}=T_{x}M$ for every $x\in M$.
\end{itemize}
where $(\ad \,Y)X=[Y,X]$ and $\mathrm{Lie}\, \mathcal{F}$ denotes the smallest Lie algebra containing a family of vector fields $\mathcal{F}$.
 The Hamitonian \eqref{eq:hamintro} is naturally associated with the optimal control problem where the dynamics  is affine in the control 
\begin{equation}\label{eq:controlsyst0}
\dot{x}(t)=X_0(x(t))+\sum_{i=1}^k u_i(t) X_i(x(t)), \qquad x\in M, %u\in L^\infty(\bbR;\bbR^k).
\end{equation}
and one wants to minimize a quadratic cost with potential (here $x_{u}$ denotes the solution of \eqref{eq:controlsyst0} associated with $u\in L^{\infty}([0,T],\R^{k})$)
\begin{equation}\label{eq:J}
J_T(u):=\frac{1}{2}\int_0^T \|u(s)\|^2-Q(x_{u}(s))ds.
\end{equation}
We stress that when $X_{0}=0$, $Q=0$ and $k=n$, the optimal control problem described above is the geodesic problem associated with the Riemannian metric defined by the orthonormal frame $X_{1},\ldots,X_{n}$ on $M$ and $H$ is the corresponding Hamiltonian. The case $X_{0}=0$, $Q=0$ and $k<n$ includes the geodesic problem in sub-Riemannian geometry.

Consider the projections on $M$ of integral curves of the Hamiltonian vector field $\vec H$ in $T^*M$. Under our assumptions, short pieces of these curves are minimizers for the optimal control problem (i.e., geodesics in the case of Riemannian or sub-Riemannian geometry). However, in general, not all minimizers can be obtained in this way. This is due to the possible presence of the so-called abnormal minimizers \cite{montgomeryabnormal}. The projected trajectories, as solutions of an Hamiltonian system in $T^{*}M$, are smooth and parametrized by the initial covector in the cotangent bundle. 

If the initial covector $\lambda$ corresponds to an \emph{ample} and \emph{equiregular} trajectory (cf.\
 Section \ref{s:gfyd} for precise definitions) then the exponential map $\pi\circ e^{t\vec H}$ is a local diffeomorphism at $\lambda$ and it is possible to compute the variation of a smooth volume $\mu$ under the exponential map, as in the Riemannian case.

Let us stress that in the Riemannian case all $\lambda\in T^{*}M$ satisfy these assumptions, while in the sub-Riemannian case one can prove that there exists a non-empty open and dense set of covectors $\mathcal{A}\subset T^{*}M$ such that the corresponding geodesic is ample and equiregular (see Proposition \ref{p:aequi}). If the initial point $x$ is fixed, then there exists a non-empty Zariski open set of covectors in $T^{*}_{x}M$ such that the corresponding geodesic is ample, but the existence of equiregular geodesics is not guaranteed. On the other hand, on any ample geodesic, there exists an open dense set of $t$ such that the germ of the geodesic at $\gamma(t)$ is equiregular (cf.\ Section \ref{s:sr}). 
 
 %\textcolor{gray}{by looking at the measure as an $n$-form in the cotangent space $T^{*}M$, which has dimension $2n$, restricted to the fiber $T^*_xM$.}%\textcolor{Blue}{its pull-back through the Hamiltonian flow, that is an $n$-form in the cotangent space $T^{*}M$, which has dimension $2n$, restricted to the fiber $T^*_xM$.}  %This expansion, shows that the correction (?) is given by the sub-Riemannian curvature introduced in \cite{}. In the Riemannian case this reduces to the classical Ricci curvature. 

\medskip 
The main result of this paper is the generalization of the asymptotics \eqref{eq:Riem33} to any flow arising from an  Hamiltonian that is quadratic and convex on fibers, along any trajectory satisfying our regularity assumptions. In particular we give a geometric characterization of the terms appearing in the asymptotic expansion of a smooth volume $\mu$ under the Hamiltonian flow $\pi\circ e^{t\vec H}$ and we interpret every coefficient as the generalization of the corresponding Riemannian element. 

  %If the curve $\g$ is sufficiently regular, the approach of Eq. \eqref{eq:volume cotangent} involving the pull-back of the Hamiltonian flow can be generalized to a geometry induced by an affine control system (see Fig. \ref{subRiem}). %It is now clear that the distortion of the volume $\mu$ taken in Theorem \ref{th:main} is reasonable and Eq. \eqref{eq:volume R3} suggests that its asymptotics must reveal some geometric invariants of the structure.
%Let $M$ be a smooth manifold and let $\vec{H}$ denote a quadratic, possibly degenerate, Hamiltonian. A special class of dynamicss is given by the Hamiltonian, whose restriction to a fiber $T_x^*M$ is a degenerate homogeneous quadratic form (i.e., without linear or constant terms). Then this case recovers the sub-Riemannian structures on the manifold $M$.
%In this paper we perform in details this construction for structures arising from affine control problems with quadratic cost. 

Fix $x\in M$ and $\lambda\in T_x^*M$. Let $\gamma(t)=\pi(e^{t\vec{H}}(\lambda))$ be the associated geodesic on $M$ and assume that it is ample and equiregular. With such a geodesic it is possible to associate an integer $\calN(\lambda)$  which is defined through the structure of the Lie brackets of the controlled vector fields $X_{1},\ldots,X_{k}$ along $\g$ (cf.\ Definition \ref{def:geodesic dimension}). This is in an invariant that depends only on the Lie algebraic structure of the drift field $X_{0}$ and the distribution $\distr=\text{span}\{X_{1},\ldots,X_{k}\}$ \emph{along the trajectory} and not on the particular frame (that induces the metric) on $\distr$. The notation stresses that this integer can a priori depend on the trajectory.

The results obtained in \cite[Section 6.5]{curvature} imply that there exists $C_\lambda>0$ such that, for $t\to 0$
\begin{equation}\label{eq:SRiem33}
\left.(\pi\circ e^{t\vec{H}})^*\mu\right|_{\V_{\lam}}=t^{\calN(\lambda)}\left(C_\lambda+O(t)\right)\mu^{*}_\lam.
\end{equation} 
Once the order of the asymptotics is determined, one can introduce the \emph{volume geodesic derivative} 
as follows 
\bqn
\rho(\lam)\mu^{*}_{\lam}:=\frac{d}{dt}\bigg|_{t=0} \log \left(t^{-\mathcal{N}(\lambda)}\left.(\pi\circ e^{t\vec H})^{*}\mu\right|_{\V_{\lam}}\right),
\eqn
This is the natural generalization of the quantity introduced in \eqref{eq:rhoriem}.  In Section \ref{s:formula} we show an explicit formula to compute $\rho$, in terms of the \emph{symbol of the structure along the geodesic} (cf.\ Definition~\ref{def:symbol}) and we compute it explicitly for contact manifolds endowed with Popp's volume. 

The Riemannian Ricci tensor appearing in \eqref{eq:Riem33} is  replaced by the trace of a curvature operator in the direction of $\lambda$. This curvature operator $\mc{R}_\lambda:\distr_{x}\to \distr_{x}$, is a generalization of the sectional curvature and is defined in \cite{curvature} for the wide class of geometric structures arising from affine control systems. In the Riemannian case $\mc{R}_\lambda(w)=R^{g}(w,v)v$, where $R^{g}$ is the Riemann tensor associated with $g$, $\lambda=\mathfrak{i}(v)$ and $w\in \distr_{x}=T_{x}M$. Notice that in this case $\mathrm{tr} \, \mc{R}_\lambda=\text{Ricci}^{g}(v,v)$.
 
All the geometric invariants are rational functions in the initial covector $\lambda$. The precise statement of our theorem reads as follows.
\begin{theorem}\label{th:main} Let $\mu$ be a smooth volume form on $M$ and $\g(t)=\pi(\lam(t))=\pi(e^{t\vec H}(\lam))$ be an ample and equiregular trajectory, with $\lambda\in T^*_xM$.  Then we have the following asymptotic expansion
\begin{equation} \label{eq:volume2}
\left.(\pi\circ e^{t\vec H})^{*}\mu\right|_{\V_{\lam}}=C_{\lam}t^{\mc{N}(\lambda)}\;e^{\int_0^t \rho(\lambda(s))ds}\;\left(1-t^2\frac{\mathrm{tr} \mc{R}_\lambda}{6}+o(t^2) \right)\mu^*_{\lam}.
\end{equation}
where $\mu_\lam^{*}$ is the canonical volume induced by $\mu$ on $\V_{\lam}\simeq T_{\lam}(T^{*}_{x}M)$.
%where $\mc{N}$ is the geodesic dimension associated with $\gamma$, $\mc{R}_\lambda$ is the $k\times k$ curvature matrix associated with $\lambda$ and $\rho(\lambda(t))$ is a rational operator in $\lambda(t)$ depending on $\mu$ and the symbol of the curve $\g$. 
\end{theorem}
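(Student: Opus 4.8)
The plan is to reduce the statement to a linear-algebra computation on the vertical subspace $\V_\lam$ by trivializing the pullback form along the extremal, following the approach of \cite{curvature}. First I would fix a smooth frame $e_1(t),\dots,e_n(t)$ of vertical vectors along the curve $\lam(t)=e^{t\vec H}(\lam)$ obtained by choosing a basis of $\V_\lam\simeq T_x^*M$ and pushing it forward in a controlled way (e.g.\ by the flow, or by a parallel frame for the canonical Ehresmann connection of \cite{curvature}); this represents $(\pi\circ e^{t\vec H})^*\mu|_{\V_\lam}$ as $\det$ of the matrix whose columns are the images $(\pi\circ e^{t\vec H})_* e_i(t)$, measured against $\mu^*_\lam$. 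The key point, already established in \cite[Section 6.5]{curvature}, is that the leading order is $t^{\calN(\lambda)}$ with nonzero coefficient $C_\lam$ (this is \eqref{eq:SRiem33}), so it suffices to expand the normalized quantity $f(t):=t^{-\calN(\lambda)}\,(\pi\circ e^{t\vec H})^*\mu|_{\V_\lam}/\mu^*_\lam$ to order $t^2$ and identify the coefficients.

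Next I would write $f(t)=C_\lam\exp\!\big(g(t)\big)$ and compute $g(t)=\log f(t)$ by taking a logarithmic derivative. The first-order term $g'(0)$ is exactly $\rho(\lambda)$ by its very definition, and more is true: running the same analysis with the base point shifted to $\gamma(s)$ and the covector to $\lambda(s)$ shows $\frac{d}{dt}\log f_s(t)\big|_{t=0}=\rho(\lambda(s))$, which upon integration produces the factor $e^{\int_0^t\rho(\lambda(s))ds}$; this is the step where one must be careful that the ``measure invariant'' $\rho$ is genuinely a function on $T^*M$ evaluated along the extremal, not merely a number attached to $\lambda$. After pulling out this exponential factor, the remaining factor $h(t):=f(t)\,e^{-\int_0^t\rho(\lambda(s))ds}/C_\lam$ satisfies $h(0)=1$, $h'(0)=0$, and I must show $h''(0)=-\tfrac13\trace\mc R_\lambda$. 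This is where the curvature enters: differentiating the frame $(\pi\circ e^{t\vec H})_* e_i(t)$ twice and using the structural equations of the canonical connection from \cite{curvature}, the second variation of the volume is governed precisely by the trace of the curvature operator $\mc R_\lambda$, with the universal constant $-1/6$ in the quadratic coefficient of $h$ coming from the Taylor expansion of the solution of the Jacobi-type equation $\ddot J + \mc R_\lambda J = 0$ (the same mechanism, and the same $1/6$, as in the Riemannian expansion \eqref{eq:volume R}).

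The main obstacle is the second-order computation in the non-Riemannian setting: because the trajectory is only ample and equiregular, the vertical frame degenerates at different orders in $t$ along a flag of subspaces, so one cannot use a single Jacobi equation of the naive form. The correct bookkeeping is the \emph{splitting of $\V_\lam$ according to the growth vector} and the use of the normal moving frame of \cite{curvature}, in which the $\mc N(\lambda)$-dependence is absorbed into a diagonal scaling $\diag(t^{a_1},\dots,t^{a_n})$ and the curvature operator $\mc R_\lambda$ acts on the top stratum $\distr_x$. Concretely I would: (i) invoke the normal frame and the resulting block-triangular form of the matrix of $(\pi\circ e^{t\vec H})_* e_i(t)$; (ii) factor its determinant as $t^{\mc N(\lambda)}$ times a convergent series; (iii) isolate the $O(t)$ contribution and match it to $\rho$; (iv) isolate the $O(t^2)$ contribution, observe that all cross-terms between different strata are of too high order to contribute, so that only the diagonal Jacobi dynamics on $\distr_x$ survives, and conclude via $\trace\mc R_\lambda$. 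The rationality of all invariants in $\lambda$, stated before the theorem, guarantees that these expansions are legitimate and that the $o(t^2)$ remainder is uniform, closing the argument.
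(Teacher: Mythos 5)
Your overall architecture coincides with the paper's: express the pulled-back volume as the determinant of the matrix $N(t)$ relating the propagated vertical frame to the horizontal part of the canonical frame, extract the order $t^{\calN(\lambda)}$ by a diagonal rescaling indexed by the Young diagram, obtain the factor $e^{\int_0^t\rho(\lambda(s))ds}$ by the time-shift argument $g_\lambda(t+s)=g_{\lambda(t)}(s)$, and read the curvature off the $t^2$ coefficient. Steps (i)--(iii) are sound and match Lemma \ref{lemma:g} and the first part of the proof of Proposition \ref{prop:main}.

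The gap is in step (iv). You assert that the coefficient $-\tfrac16\tr\calR_\lambda$ arises ``by the same mechanism, and the same $1/6$, as in the Riemannian expansion'' from a Jacobi equation of the form $\ddot J+\calR_\lambda J=0$, and that ``only the diagonal Jacobi dynamics on $\distr_x$ survives.'' Neither claim holds for a general Young diagram. The Jacobi system is the first-order system \eqref{eq:Jacobicoord} with the matrices $C_1,C_2$, and the curvature operator is \emph{not} the canonical curvature matrix $R(0)$: they are related by the nontrivial weights $\Omega(n_a,n_b)$ of \eqref{eq:ROmega}, e.g.\ $(\calR_\lambda)_{aa}=3\tfrac{n_a}{4n_a^2-1}R_{a1,a1}(0)$. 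What the expansion of $\det N(t)$ actually produces is $-\tr(\widetilde N^{-1}G)$, where $\widetilde N_{ai,bj}=(-1)^{j-1}\delta_{ab}/(i+j-1)!$ and $G_{ai,bj}=(-1)^{j-1}R_{a1,b1}(0)/(i+j+1)!$ are Hilbert-type matrices; the trace involves \emph{all} entries $i,j=1,\ldots,n_a$ within each row of the diagram, not only the top stratum, so nothing ``drops out by order considerations'' beyond the block-diagonality of $\widetilde N$ in the row index $a$. The identification of this coefficient with $\tfrac16\tr\calR_\lambda$ is exactly the combinatorial identity $\tr(\widehat N^{-1}\widehat G)=\tfrac12\tfrac{n}{4n^2-1}$ of Lemma \ref{th:conti1}, whose proof occupies Appendix B (inverses of generalized Hilbert matrices and two binomial identities), and which is precisely tuned to cancel the weights $\Omega(n_a,n_a)$. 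Your analogy with the Riemannian case only covers rows of length $1$, where $\Omega(1,1)=1/3$ and the identity is trivial; for $n_a\ge 2$ the quantitative step is missing and cannot be supplied by the second-order ODE heuristic. (A similar, lighter omission: the nonvanishing of $C_\lambda=\det\widetilde N$, which you import from \cite[Section 6.5]{curvature}, is also needed in explicit form if one wants the constant in \eqref{eq:c1}.)
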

Again, in the asymptotics \eqref{eq:volume2}, the choice of the volume form $\mu$ affects only the volume geodesic derivative $\rho$. Notice also that the constant $C_{\lambda}$ and the main order $t^{\calN(\lam)}$ depend only on the Young diagram associated with the curve $\g$, while the term $\mc{R}_\lambda$ (and actually the whole asymptotic expansion contained in the parentheses) depends only on the curvature like-invariants of the cost of the optimal control problem \eqref{eq:controlsyst0}-\eqref{eq:J}, i.e., on the Hamiltonian \eqref{eq:hamintro}. 

We stress that in the Riemannian case every trajectory is ample and equiregular and the constant $C_{\lam}$ is always equal to one and $\calN(\lam)=n=\dim M$ (cf.\ formula \eqref{eq:Riem33}).

In other words, the asymptotics \eqref{eq:volume2} ``isolates'' the contribution given by the volume form with respect to the contribution given by the dynamics/geometry.

\begin{remark}
 Another geometric interpretation of the variation of the volume is given in Figure \ref{figurabellissima}. Let $\Omega\subset T^*_xM$ be an infinitesimal neighborhood of $\lambda$ and let $\Omega_{x,t}:=\pi\circ e^{t\vec{H}}(\Omega)$ be its image on $M$ with respect to the Hamiltonian flow. For every $t$ the set $\Omega_{x,t}\subset M$ is a neighborhood of $\g(t)$. By construction
$$\mu(\Omega_{x,t})=\int_{\Omega}(\pi\circ e^{t\vec{H}})^*\mu,$$
and \eqref{eq:volume2} represents exactly the variation of the volume element along $\g$.
\end{remark}

\begin{figure}[h]
% Generated with LaTeXDraw 2.0.8
% Sun Oct 18 10:55:08 CEST 2015
\begin{center}
\scalebox{0.8} % Change this value to rescale the drawing.
{
\begin{pspicture}(0,-4.763445)(15.632034,4.763445)
\definecolor{colour1}{rgb}{0.627451,1.0,1.0}
\definecolor{colour2}{rgb}{0.6,0.6,0.6}
\psbezier[linecolor=black, linewidth=0.02, linestyle=dashed, dash=0.17638889cm 0.10583334cm, fillstyle=solid,fillcolor=colour1, opacity=0.29803923](7.617892,-0.20155518)(7.617892,-0.55155516)(8.455439,-0.8613052)(8.517892,-0.55155516)(8.580345,-0.24180518)(8.292892,-0.4348885)(8.509455,-0.087805174)(8.726017,0.25927815)(7.9864087,0.21020953)(7.8849225,0.14844483)(7.783436,0.08668012)(7.617892,0.14844483)(7.617892,-0.20155518)
\psbezier[linecolor=black, linewidth=0.02, linestyle=dashed, dash=0.17638889cm 0.10583334cm, fillstyle=solid,fillcolor=colour1, opacity=0.29803923](5.457892,-0.4391385)(5.4812255,-0.69385517)(5.889274,-0.73394686)(5.9359407,-0.49155518)(5.9826074,-0.24916351)(5.947892,-0.57882184)(6.070392,-0.33643016)(6.192892,-0.09403851)(5.6795588,-0.10322184)(5.6037254,-0.15155518)(5.527892,-0.19988851)(5.434559,-0.18442184)(5.457892,-0.4391385)
\psbezier[linecolor=black, linewidth=0.02, linestyle=dashed, dash=0.17638889cm 0.10583334cm, fillstyle=solid,fillcolor=colour1, opacity=0.29803923](9.577892,-0.23855518)(9.612654,-0.6167552)(10.377416,-0.9339552)(10.446939,-0.5740552)(10.516463,-0.21415518)(10.307892,-0.4459552)(10.490392,-0.086055174)(10.672892,0.27384484)(10.017892,0.4484448)(9.795154,0.18844482)(9.572415,-0.071555175)(9.54313,0.13964483)(9.577892,-0.23855518)
\rput{-9.303995}(-0.25771296,0.4131096){\psellipse[linecolor=black, linewidth=0.02, fillstyle=solid,fillcolor=colour1, opacity=0.29803923, dimen=outer](2.4095588,1.7901115)(0.34166667,0.85833335)}
\psbezier[linecolor=black, linewidth=0.04](0.017891997,-2.851555)(2.8874571,-2.0515552)(9.774414,-2.0515552)(13.217892,-2.851555)
\psbezier[linecolor=black, linewidth=0.04](13.217892,-2.851555)(14.017892,-0.85155517)(15.217892,0.34844482)(15.617892,0.7484448)
\psdots[linecolor=black, dotsize=0.16](2.417892,-0.85155517)
\psline[linecolor=black, linewidth=0.03](1.217892,2.0484447)(1.217892,-2.0515552)
\psline[linecolor=black, linewidth=0.028](3.217892,4.748445)(3.217892,0.5284448)
\psline[linecolor=black, linewidth=0.03](1.217892,2.0484447)(3.217892,4.748445)(3.217892,4.748445)
\psline[linecolor=black, linewidth=0.04](1.217892,-4.751555)(2.917892,-2.4515553)
\psbezier[linecolor=black, linewidth=0.04, linestyle=dashed, dash=0.17638889cm 0.10583334cm](1.217892,-0.85155517)(1.817892,0.108444825)(2.217892,0.62844485)(2.517892,0.94844484)
\psbezier[linecolor=black, linewidth=0.04](0.017891997,-2.851555)(0.517892,-1.8515552)(1.217892,-0.85155517)(1.217892,-0.85155517)
\psbezier[linecolor=black, linewidth=0.04](3.217892,1.1484448)(7.557892,1.5484449)(13.757892,1.1484448)(15.617892,0.7484448)
\psbezier[linecolor=black, linewidth=0.04, linestyle=dashed, dash=0.17638889cm 0.10583334cm](2.517892,0.94844484)(2.817892,1.0484449)(3.0845587,1.1484448)(3.217892,1.1484448)
\psbezier[linecolor=black, linewidth=0.02, arrowsize=0.05291666666666668cm 2.0,arrowlength=1.4,arrowinset=0.0]{->}(2.417892,1.7484448)(5.3282523,1.7484448)(8.74627,0.8140698)(9.417892,0.4484448)
\rput[bl](2.217892,1.4484448){$\lambda$}
\rput[bl](2.017892,-1.2515552){$x$}
\rput[bl](9.797892,-0.13155517){$\gamma(t)$}
\rput[bl](12.517892,-2.4515553){$M$}
\rput[bl](2.277892,3.0684447){$T_{x}^*M$}
\psdots[linecolor=black, dotsize=0.16](10.117892,-0.25155517)
\psbezier[linecolor=black, linewidth=0.04, linestyle=dashed, dash=0.17638889cm 0.10583334cm](2.417892,-0.85155517)(4.617892,-0.45155516)(9.617892,0.14844483)(12.017892,-0.45155516)
\psdots[linecolor=black, dotsize=0.1](8.034492,-0.22815518)
\psdots[linecolor=black, dotsize=0.1](5.7345586,-0.3915552)
\psbezier[linecolor=colour2, linewidth=0.02, linestyle=dashed, dash=0.17638889cm 0.10583334cm](2.417892,-0.8265552)(3.817892,-0.25155517)(8.017892,0.24844483)(10.217892,0.34844482)
\psbezier[linecolor=colour2, linewidth=0.02, linestyle=dashed, dash=0.17638889cm 0.10583334cm](2.355392,-0.8265552)(5.417892,-0.6515552)(9.192892,-0.61405516)(10.117892,-0.7515552)
\rput[bl](8.815392,0.7409448){$\pi\circ e^{t\vec{H}}$}
\psline[linecolor=black, linewidth=0.03, linestyle=dashed, dash=0.17638889cm 0.10583334cm](2.617892,-2.851555)(3.217892,-2.026555)(3.217892,-1.9322695)(3.217892,0.40719482)(3.217892,0.4484448)
\rput[bl](12.137892,-0.81155515){$\gamma$}
\psdots[linecolor=black, dotsize=0.1](2.417892,1.7484448)
\psline[linecolor=black, linewidth=0.03](1.217892,-2.651555)(1.217892,-4.751555)
\psline[linecolor=black, linewidth=0.03, linestyle=dashed, dash=0.17638889cm 0.10583334cm](1.217892,-2.5515552)(1.217892,-2.0515552)
\rput[bl](1.817892,2.0484447){$\Omega$}
\rput[bl](9.817892,-1.1515552){$\Omega_{x,t}$}
\end{pspicture}
}
\end{center}
\caption{Infinitesimal variation of the volume along a geodesic}
\label{figurabellissima}
\end{figure}
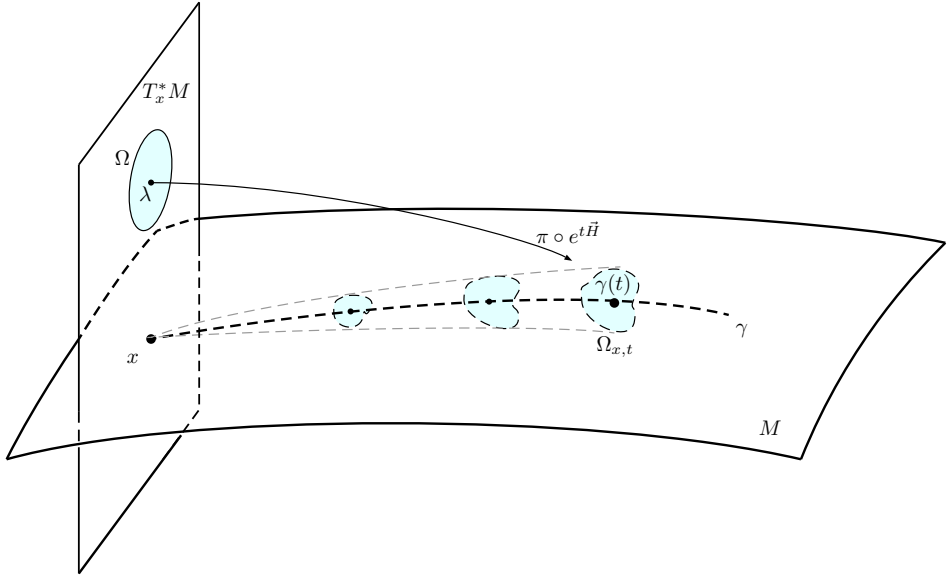
%{\red 
%Hence by integrating the asymptotic expansion \eqref{eq:volume2}, under some uniformity assumption with respect to $\lambda$, one can compute the asymptotic expansion of measure of sets under geodesic contraction. This is strictly related to the so called measure contraction properties (MCP), where, roughly speaking, one wants to control the measure $\mu(\Omega_{x,t})$ of the geodesic contraction for every Borel set on $M$ and every $t\in [0,1]$.}
%
%A natural conjecture is that by bounding the corresponding invariants that give control on the asymptotic behavior at higher order, one could obtain indeed a global control. 
%

\subsection{The invariant $\rho$ in the Riemannian case}

In the Riemannian case, for $\mu=e^{\psi}\vol_{g}$, one has $\rho(v)=g(\nabla \psi,v)$ for every tangent vector $v$. 
If one writes explicitly the expansion of the exponential term in  \eqref{eq:Riem33} at order 2 one finds, for $\gamma(t)=\exp_{x}(tv)$
\begin{align}\label{eq:Riem334}
\left.(\pi\circ e^{t\vec{H}})^*\mu\right|_{\V_{\lambda}}=
t^n&\left(1+  \rho( v)t-\left(\frac{1}{6}\mathrm{Ric}^{g}(v,v)-\frac{1}{2}\rho( v)^{2}-\frac12\nabla\rho( v, v)\right)t^2+O(t^3)\right)\mu^{*}_\lambda.
\end{align}
In particular for $X,Y$ vector fields on $M$
\begin{gather}
\rho^{2}=\nabla \psi\otimes \nabla \psi,\qquad \rho(X)^2= g(\nabla \psi,X)^2, \\
\nabla\rho=\mathrm{Hess}\,\psi,\qquad \nabla\rho(X,Y)=Y(X\psi)-(\nabla_{Y}X) \psi,
\end{gather}
and summing over a local orthonormal basis $X_{1},\ldots,X_{n}$  for $g$ one can compute the traces 
\begin{gather}
\mathrm{tr}\, \rho^{2}
=%\sum_{i=1}^n \rho(X_i)^2=\sum_{i=1}^n g(\nabla \psi,X_i)^2=\sum_{i=1}^n (X_i \psi)^2
\|\nabla \psi\|^2 ,\qquad
\mathrm{tr}\, \nabla\rho=
%\sum_{i=1}^n \nabla\rho(X_i,X_i)=\sum_{i=1}^nX_i(X_i\psi)-\Gamma^j_{ii}X_j \psi
\Delta_g \psi.
\end{gather}
%in terms of an orthonormal frame $X_{1},\ldots,X_{n}$ and the Christoffel symbols $\Gamma_{ij}^{k}$ of the Levi-Civita connection. 

\subsection{On the relation with the small time heat kernel asymptotics} The volume geodesic derivative $\rho$ introduced in this paper, together with the curvature-like invariants of the dynamics, characterize in the Riemannian case the small time heat kernel expansion on the diagonal associated with a weighted Laplacian $\Delta_{\mu}=\text{div}_{\mu}\nabla$.

Indeed let us consider a weighted Riemannian manifold $(M,g,\mu)$ with $\mu=e^{\psi}\text{vol}_{g}$, and denote by $p_{\mu}(t,x,y)$  the fundamental solution of the heat equation $\partial_{t}-\Delta_{\mu}=0$ associated with $\Delta_{\mu}$.  Recall that $\Delta_{\mu}=\Delta_{g}+g(\nabla \psi,\nabla\cdot)$, where $\Delta_{g}$ is the Laplace-Beltrami operator of $(M,g)$. One has the following small time asymptotics (see for instance \cite{bismutbook})
$$p_\mu(t,x,x)=\frac{c}{t^{n/2}}\left[1+t\left(\frac{S(x)}{12}-\frac{\|\nabla\psi(x)\|^2}{8}-\frac{\Delta_g(\psi)}{4}\right)+o(t)\right]$$
for some $c>0$. 

Hence the terms appearing in the heat kernel expansion are exactly the trace of the invariants that determine the expansion of the exponential in  \eqref{eq:Riem33} at order 2. 

A natural conjecture is that the same three coefficients describe the heat kernel small time asymptotics expansion also in the sub-Riemannian case. This conjecture is true in the 3D contact case  as it follows by the results obtained in \cite{article:Barilari}, since on a 3D contact manifold if one chooses $\mu$ equal to Popp volume one has $\rho=0$ (cf.\ Remark \ref{rem:popprho3d}). See also \cite{barilaripaoli,paoli,barboa} for some results about small time heat kernel expansion for H\"ormander operators with drift.
 
\subsection{Structure of the paper}
In Section \ref{s:setting} we describe the general setting, while in Section \ref{s:gfyd} and \ref{s:cfjf} we introduce some preliminaries needed for the proof of the main result. Section \ref{s:rho} is devoted to the definition of the volume geodesic derivative $\rho$ and the analysis of its properties, while in Section \ref{s:formula} we give a formula that permits us to compute it through an adapted frame. Section \ref{s:sr} specifies the whole construction to sub-Riemannian case. Finally, Section \ref{s:proof} contains the proof of the main result.

%%%%%%%%%%%%%%%%%%%%%%%%%%%%%%%%%%%%%%%%%%%%%%%%%%%%%%%%%%%%%%%%%%%%%%%%%%%%%%%%%%%%%%%%%%%%%%%%%%%%%%%%%%%%%%%%%%%%%%%%%%%%%%%%%%%%%%%%%%%%%%%
\section{General setting} \label{s:setting}
Let $M$ be an $n$-dimensional connected manifold and let $X_0,X_1\ldots,X_k\in \mathrm{Vec}(M)$ be smooth vector fields, with $k\leq n$. We consider the following \emph{affine control system} on $M$
\begin{equation}\label{eq:controlsyst}
\dot{x}(t)=X_0(x(t))+\sum_{i=1}^k u_i(t) X_i(x(t)), \qquad x\in M,
\end{equation}
where $u\in L^{\infty}([0,T],\R^{k})$ is a measurable and essentially bounded function called \emph{control}. In what follows we assume that 
\begin{itemize}
\iii[(H0)]
$X_1,\ldots,X_k$ are everywhere linearly independent,
\iii[(H1)] $\mathrm{Lie}\,\{(\ad \,X_{0})^{j}X_{i}\,|\, i=1,\ldots,k, j\geq 0\}\big|_{x}=T_{x}M$ for every $x\in M$.
\end{itemize}
where $(\ad \,Y)X=[Y,X]$ and $\mathrm{Lie}\,\mathcal{F}$ denotes the smallest Lie algebra containing a family of vector fields $\mathcal{F}$).

A Lipschitz curve $\g:[0,T]\to M$ is said to be \emph{admissible} for the system \eqref{eq:controlsyst} if there exists a control $u\in L^{\infty}([0,T],\R^k)$ such that $\g$ satisfies \eqref{eq:controlsyst} for a.e.\ $t\in[0,T]$. The pair $(\g,u)$ of an admissible curve $\g$ and its control $u$ is called \emph{admissible pair}.

\begin{remark}
The affine control system can be defined more generally as a pair $(\bbu, f)$, where $\bbu$ is a smooth vector bundle of rank $k$ with base $M$ and fiber $\bbu_x$, and $f:\bbu\to TM$ is a smooth affine morphism of vector bundles such that $\pi\circ f(u)=x$, for every $u\in\bbu_x$. Locally, by taking a local trivialization of $\bbu$, we can write $f(u)=X_0+\sum_{i=1}^k u_i X_i$ for $u\in\bbu$. For more details about this approach see \cite{nostrolibro,curvature}.
\end{remark}

We denote by $\calD\subset TM$ the \emph{distribution}, that is the family of subspaces  spanned by the linear part of the control problem at a point, i.e.,
$$\calD=\{\calD_x\}_{x\in M}, \quad \mbox{ where } \calD_x:=\mathrm{span}\{X_1,\ldots,X_k\}\big|_x\subset T_{x}M.$$
The distribution $\calD$ has constant rank by assumption (H0), and we endow $\calD$ with the inner product such that the fields $X_1,\ldots,X_k$ are orthonormal. 
We denote by $\Gamma(\calD)$ the set of smooth sections of $\calD$, also called \emph{horizontal} vector fields. 
%\begin{remark} 
%Notice that, by our assumption, the distribution $\calD$ has constant rank equal to $k$. Actually we could relax this assumption by requiring only that $\calD$ is regular, i.e. $\dim \calD_{x}$ does not depend on the point $x$.
%\end{remark}
Among all admissible trajectories that join two fixed points in time $T>0$, we want to minimize the \emph{quadratic cost functional}
$$J_T(u):=\frac{1}{2}\int_0^T \|u(s)\|^2-Q(x_{u}(s))ds,$$
where $Q$ is a smooth function on $M$, playing the role of a potential. Here $x_{u}$ denotes the solution of \eqref{eq:controlsyst} associated with $u$.
\begin{definition}
For $x_0,x_1\in M$ and  $T>0$, we define the \emph{value function}
\begin{equation}
S_T(x_0,x_1):=\inf\left\{J_T(u)\,|\, (\g,u) \mbox{ admissible pair}, \g(0)=x_0, \g(T)=x_1\right\}.
\label{eq:value}
\end{equation}
\end{definition}

The assumption (H1) implies, by Krener's theorem  (see \cite[Theorem 3.10]{notejak} or
 \cite[Chapter 3]{Book:Jurdjevic}), that the attainable set in time $T>0$ from a fixed point $x_{0}\in M$, that is the set  $$A_{x_{0},T}=\{x_{1}\in M: S_{T}(x_{0},x_{1})<+\infty\}$$ has non-empty interior for all $T>0$. This is a necessary assumption to the existence of ample geodesics.

%
%\noindent In the following we will assume that  there always exists an admissible curve joining $x_1$ and $x_2$ in time $T$, namely 
%\begin{itemize}
%\iii[(H2)]
%$S_T(x_1,x_2)<+\infty$ for every $x_1,x_2\in M$ and $T>0$.
%\end{itemize}
%In the terminology of control theory, this says that the system \eqref{eq:controlsyst} is controllable.

\medskip

Important examples of affine control problems are sub-Riemannian structures. These are  triples $(M,\distr,g)$, where $M$ is a smooth manifold, $\distr$ is a smooth, completely non-integrable vector sub-bundle of $TM$ and $g$ is a smooth inner product on $\distr$. In our context, these are included in the case $X_{0}=0$ and $Q=0$.  The value function in this case coincides with (one half of the square of) the sub-Riemannian distance, i.e., the infimum of the length of absolutely continuous admissible curves joining two points. 
In this case the assumption (H1) on $\distr$ implies, by the Rashevskii-Chow theorem, that the sub-Riemannian distance is finite on $M$. Moreover the metric topology coincides with the one of $M$.  A more detailed introduction on sub-Riemannian geometry can be found in \cite{nostrolibro,montgomerybook}.%,noterifford,notejean}.

\medskip

%
%In Riemannian geometry, it is well-known that the geodesic flow can be seen as a Hamiltonian flow on the cotangent bundle $T^{*}M$, associated with the Hamiltonian 
%$$H(p,x)=\frac{1}{2}\sum_{i=1}^{n} \la p, f_{i}(x)\ra^{2},\qquad (p,x)\in T^{*}M,$$
%where $X_{1},\ldots,X_{n}$ is any local orthonormal frame for the Riemannian structure. 

For an affine optimal control system, the associated Hamiltonian is defined as follows 
\bqn\label{eq:classH}
H(p,x)=\frac{1}{2}\sum_{i=1}^{k} \la p, X_{i}(x)\ra^{2}+\la p,X_0(x)\ra+\frac{1}{2}Q(x),\qquad (p,x)\in T^{*}M.
\eqn 
Hamilton's equations are written as follows
\begin{equation}\label{eq:PMP0}
\dot x = \frac{\partial H}{\partial p},\qquad  \dot p = - \frac{\partial H}{\partial x}\qquad (p,x)\in T^{*}M,
\end{equation}
%where $\vec{H}$ is the Hamiltonian vector field associated with $H$. %This system cannot be written as a second order equation on $M$ as in \eqref{eq:geo0}. 
\begin{theorem}[PMP, \cite{Book:Agrachev,pontbook}]\label{t:pmp} \index{Pontryagin Maximum Principle (PMP)} 
Consider a solution $\lam(t)=(p(t),\gamma(t))$ defined on $[0,T]$ of the Hamilton equations \eqref{eq:PMP0} on $T^{*}
M$. Then short pieces of the trajectory $\gamma(t) = \pi(\lam(t))$ minimize the cost between their endpoints.
\end{theorem}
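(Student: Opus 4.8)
The plan is to prove the sufficiency part of the Pontryagin Maximum Principle: every normal extremal, i.e.\ every projection $\gamma=\pi\circ\lam$ of a solution $\lam$ of \eqref{eq:PMP0}, is locally optimal for the cost $J_{T}$. The starting observation is purely algebraic: the Hamiltonian \eqref{eq:classH} is the fibrewise Legendre transform of the Lagrangian $L(x,u)=\tfrac12\|u\|^{2}-\tfrac12 Q(x)$ along the control-affine dynamics $f(x,u)=X_{0}(x)+\sum_{i}u_{i}X_{i}(x)$. Since $L$ is strictly convex in $u$, the maximizer
\[
u^{*}(p,x)=\big(\la p,X_{1}(x)\ra,\ldots,\la p,X_{k}(x)\ra\big)
\]
is smooth, $\partial H/\partial p=f(\cdot,u^{*})$, and one has the pointwise Young-type inequality $L(x,u)\ge \la p,f(x,u)\ra-H(p,x)$ for every $u$, with equality precisely when $u=u^{*}(p,x)$. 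The idea is then to realize $\gamma$ inside a field of extremals and to produce a verification (calibration) function along the lines of classical Weierstrass field theory.

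After a time translation suppose the reference solution is $\lam(s)=e^{s\vec H}(\lam_{0})$, $s\in[0,\tau]$, with $\gamma=\pi\circ\lam$. Consider the vertical fibre $\Lambda_{0}=T^{*}_{\gamma(0)}M$, a Lagrangian submanifold of $T^{*}M$, and its flow-outs $\Lambda_{s}=e^{s\vec H}(\Lambda_{0})$, which are again Lagrangian since $e^{s\vec H}$ is a symplectomorphism. The first and main technical step is to show that, after shrinking $\tau$, the projection $\pi|_{\Lambda_{s}}$ is a diffeomorphism onto an open neighbourhood $U_{s}$ of $\gamma(s)$ for all $s\in(0,\tau]$, so that the extremals issued from $\gamma(0)$ sweep out the region $\{(s,x):s\in(0,\tau],\,x\in U_{s}\}$ without conjugate points. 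In the Riemannian case this is immediate, as $d\exp$ behaves like $s\cdot\mathrm{Id}$ (compare \eqref{eq:exp2}); for a general affine control system it follows from the small-time analysis of the exponential map, and in any case it is available from \cite{Book:Agrachev,pontbook}. Letting $p_{s}:U_{s}\to\Lambda_{s}$ be the inverse section, define $a(s,x)$ by integrating the tautological $1$-form of $T^{*}M$ along paths in $\Lambda_{s}$ (plus the appropriate time contribution); then $d_{x}a(s,\cdot)=p_{s}$ has graph $\Lambda_{s}=e^{s\vec H}(\Lambda_{0})$, which is exactly the assertion that $a$ solves the Hamilton--Jacobi equation $\partial_{s}a(s,x)+H(d_{x}a(s,x),x)=0$.

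Granting the field, the conclusion follows by a direct computation. Let $(\xi,v)$ be any admissible pair with $\xi(0)=\gamma(0)$ and $\xi(\tau)=\gamma(\tau)$; for $\tau$ small we may assume $\xi(s)\in U_{s}$ for all $s$ (competitors escaping the tube have strictly larger cost, or one shrinks $\tau$ further). Setting $p_{\xi}(s):=d_{x}a(s,\xi(s))$ and using first the pointwise inequality and then Hamilton--Jacobi,
\[
J_{\tau}(v)=\int_{0}^{\tau}L(\xi,v)\,ds\ \ge\ \int_{0}^{\tau}\!\big(\la p_{\xi},\dot\xi\ra-H(p_{\xi},\xi)\big)\,ds=\int_{0}^{\tau}\frac{d}{ds}a(s,\xi(s))\,ds,
\]
and the last integral equals $a(\tau,\gamma(\tau))-\lim_{s\to0^{+}}a(s,\xi(s))$, a quantity depending only on the common endpoints. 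Along the reference curve the control is $v(s)=u^{*}(\lam(s))$ and $p(s)=d_{x}a(s,\gamma(s))$, so every inequality above is an equality; hence $\gamma|_{[0,\tau]}$ attains the minimum of $J_{\tau}$ among admissible competitors with the same endpoints, which is the claim. The chief obstacle is the field-of-extremals step --- nondegeneracy of $\pi|_{\Lambda_{s}}$ for small $s>0$ and the treatment of curves that leave the neighbourhood; once this geometric input is in place, the Hamilton--Jacobi calibration yields the optimality automatically.
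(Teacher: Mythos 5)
The paper does not prove this statement: it is quoted from \cite{Book:Agrachev,pontbook}. Your calibration/Weierstrass-field strategy is the standard route to it, and the Legendre-transform identity, the pointwise inequality $L(x,u)\ge \la p,f(x,u)\ra-H(p,x)$ with equality at $u=u^{*}(p,x)$, and the final Hamilton--Jacobi computation are all correct. The problem is the step you yourself single out as the ``chief obstacle'': with $\Lambda_{0}=T^{*}_{\gamma(0)}M$ the vertical fibre, the claim that $\pi|_{\Lambda_{s}}$ is a diffeomorphism onto a neighbourhood of $\gamma(s)$ for all small $s>0$ is \emph{false} for a general normal extremal of an affine control system. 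The nondegeneracy of $\pi_{*}\circ e^{s\vec H}_{*}$ on the vertical subspace is precisely the kind of regularity the paper isolates under the name \emph{ampleness}, and the paper itself points out that not every covector yields an ample trajectory (only a Zariski-open set in each fibre does, and equiregularity can fail entirely). Theorem \ref{t:pmp}, however, is asserted for \emph{every} solution of \eqref{eq:PMP0}. Deferring this step back to \cite{Book:Agrachev,pontbook} is circular, since those are the sources for the whole theorem. There is also a secondary issue at $s=0$: even when the fibre flow-out is nondegenerate, $U_{s}$ collapses to the point $\gamma(0)$ and $a(s,x)$ blows up as $s\to0^{+}$, so the claim that $\lim_{s\to0^{+}}a(s,\xi(s))$ exists and is the same for all competitors needs an argument you do not give.

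The standard repair keeps your calibration intact but changes the initial Lagrangian submanifold: choose any smooth function $a_{0}$ on a neighbourhood of $\gamma(0)$ with $d_{\gamma(0)}a_{0}=\lam_{0}$, and take $\Lambda_{0}=\{(d_{x}a_{0},x)\}$, the graph of $da_{0}$. This is a Lagrangian submanifold containing $\lam_{0}$ on which $\pi$ is already a diffeomorphism, so by continuity $\pi|_{e^{s\vec H}(\Lambda_{0})}$ remains a local diffeomorphism near $\lam(s)$ for all $|s|\le\tau$ with $\tau$ small, with no ampleness hypothesis and no degeneration at $s=0$. The generating function $a(s,x)$ is then smooth down to $s=0$ with $a(0,\cdot)=a_{0}$, the boundary term becomes $a(\tau,\gamma(\tau))-a_{0}(\gamma(0))$, which manifestly depends only on the endpoints, and the rest of your argument (including the equality case along $\gamma$, since $\lam(s)\in e^{s\vec H}(\Lambda_{0})$) goes through verbatim. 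The treatment of competitors leaving the tube still needs a separate, if routine, estimate.
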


From now on, using a slight abuse of notation, we call \emph{geodesic} \index{geodesic} any projection $\gamma:[0,T]\to M$ of an integral line of the Hamiltonian vector field. In the general case, some minimizers of the cost might not satisfy this equation.  These are the so-called strictly abnormal minimizers \cite{montgomeryabnormal}, and they are related with hard open problems in control theory and in sub-Riemannian geometry. In what follows we will focus on those minimizers that come from the Hamilton equations (also called normal) and that satisfy a suitable regularity assumption. Notice that normal geodesics are smooth.

%%%%%%%%%%%%%%%%%%%%%%%%%%%%%%%%%%%%%%%%%%%%%%%%%%%%%%%%%%%%%%%%%%%%%%%%%%%%%%%%%%%%%%%%%%%%%%%%%%%%%%%%%%%%%%%%%%%%%%%%%%%%%%%%%%%%%%%%%%%%%%%
\section{Geodesic flag and symbol}\label{s:gfyd}
In this section we define the flag and the symbol of a geodesic, that are elements carrying information about the germ of the distribution and the drift along the trajectory. The symbol is the graded vector space associated with the flag and is endowed with an inner product induced by the metric on the distribution.

\subsection{The class of symbols} We start by defining the class of objects we deal with.
\bdeff A \emph{symbol} $S$ is a pair $(V,L)$ where 
\bi 
\iii[(i)] $V$ is a graded vector space $V=\oplus_{i=1}^{m}V_{i}$, endowed with an inner product $\la\cdot,\cdot\ra$ on its first layer $V_{1}$, 
\iii[(ii)] $L=\{L_{i}\}_{i=1}^{m}$ is a family of surjective linear maps $L_{i}:V_{1}\to V_{i}$.
\ei
\edeff

\begin{remark}
Through the surjective linear maps $L_{i}:V_{1}\to V_{i}$, the inner product on $V_{1}$ naturally induces a norm of $v\in  V_{i}$ as follows
$$\|v\|_{V_{i}}:=\min \left\{\| u\|_{V_{1}} \mbox{ s.t. } L_{i}(u)=v  \right\}.$$\
It is easy to check that, since $\|\cdot\|_{V_{1}}$ is induced by an inner product, then $\|\cdot\|_{V_{i}}$ is induced by an inner product as well. Hence the family of surjective maps endows $V$ with a global inner product by defining the subspaces $\{V_{i}\}_{i=1}^{m}$ to be mutually orthogonal.
%The graded space $\mathrm{gr}_{\g(t)}(\mc{F})$  as the graded inner product space
%$$S_{\g(t)}:=\left(\mathrm{gr}_{\g(t)}(\mc{F})\;,\;\Vert\cdot \Vert_{\g(t)}\right)$$
%where $\Vert\cdot \Vert_{\g(t)}$ is the inner product induced on the graded space  $\mathrm{gr}_{\g(t)}(\mc{F})$ by the inner product on every layer $\Vert\cdot\Vert_{\mc{F}^{i+1}_{\g(t)}/\mc{F}^{i}_{\g(t)}}$.
%\edeff
%
\end{remark}

\bdeff\label{def:equivalencesymbols} We say that the symbols $S=(V,L)$ and $S'=(V',L')$ are \emph{isomorphic} if there exists an invertible linear map  $\phi:V\to V'$ such that $\phi|_{V_{1}}:V_{1}\to V'_{1}$ is an isometry  and $L'_{i}\circ \phi=\phi\circ L_{i}$ for $i\geq1$.
%\bi
%\iii[(a)] $\phi(\gamma(s))=\gamma'(\sigma(s))$ for every $s$ in a neighborhood of $t$,
%\iii[(b)] $\phi_{*}:\distr\to \distr'$ is an isometry in a neighborhood of $\g(t)$ and $\g'(t')$, 
%\iii[(c)] $\calL_{\tanf'}\circ \phi_{*}=\phi_{*}\circ \calL_{\tanf}$. 
%\ei
\edeff

\begin{lemma}
If two symbols $S$ and $S'$ are isomorphic, then they are isometric as inner product spaces.
\end{lemma}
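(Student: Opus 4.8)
The plan is to show that the linear isomorphism $\phi:V\to V'$ witnessing the isomorphism of symbols (Definition~\ref{def:equivalencesymbols}) is automatically compatible with the grading and preserves, layer by layer, the induced norms, hence by polarization the induced inner products; the claim then follows by taking orthogonal direct sums.

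First I would observe that $\phi$ respects the grading. Since each $L_i:V_1\to V_i$ is surjective, the intertwining relation $L_i'\circ\phi=\phi\circ L_i$ gives $\phi(V_i)=\phi(L_i(V_1))=L_i'(\phi(V_1))=L_i'(V_1')=V_i'$, so $\phi$ restricts to a linear isomorphism $\phi_i:=\phi|_{V_i}:V_i\to V_i'$ and $\phi=\bigoplus_i\phi_i$. In particular $\phi(\ker L_i)=\ker L_i'$: the inclusion $\subseteq$ follows from $L_i'(\phi(u))=\phi(L_i(u))=0$ for $u\in\ker L_i$, and $\supseteq$ from injectivity of $\phi$ applied to $\phi(L_i(u))=L_i'(\phi(u))=0$.

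Next I would check that each $\phi_i$ preserves the norm $\|\cdot\|_{V_i}$ introduced in the Remark after the definition of symbol. Fix $v\in V_i$. Because $\phi|_{V_1}:V_1\to V_1'$ is a bijection and $\phi$ is injective, the map $u\mapsto\phi(u)$ carries $\{u\in V_1: L_i(u)=v\}$ bijectively onto $\{u'\in V_1': L_i'(u')=\phi_i(v)\}$ (using $L_i'(\phi(u))=\phi(L_i(u))$ together with injectivity of $\phi$ on $V_i$), and it preserves the $V_1$-norm since $\phi|_{V_1}$ is an isometry. Taking the minimum over each set, which by definition equals $\|v\|_{V_i}$ and $\|\phi_i(v)\|_{V_i'}$ respectively, gives $\|\phi_i(v)\|_{V_i'}=\|v\|_{V_i}$. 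Since, again by that Remark, $\|\cdot\|_{V_i}$ and $\|\cdot\|_{V_i'}$ are induced by inner products, the polarization identity $\langle x,y\rangle=\tfrac14\bigl(\|x+y\|^2-\|x-y\|^2\bigr)$ shows that the norm-preserving linear bijection $\phi_i$ preserves the inner product, i.e.\ $\phi_i:V_i\to V_i'$ is an isometry of inner product spaces.

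Finally I would assemble these: the global inner product on $V$ (resp.\ $V'$) is the one for which the decomposition $V=\bigoplus_i V_i$ (resp.\ $V'=\bigoplus_i V_i'$) is orthogonal and restricts on each layer to $\langle\cdot,\cdot\rangle_{V_i}$ (resp.\ $\langle\cdot,\cdot\rangle_{V_i'}$); since $\phi=\bigoplus_i\phi_i$ sends $V_i$ to $V_i'$ and each $\phi_i$ is an isometry, $\phi$ is a global isometry, so $S$ and $S'$ are isometric as inner product spaces. There is no genuine obstacle here — the argument is a routine verification. The only points deserving care are that $\phi$ is honestly bijective (so the two minimization sets are in bijection and $\ker L_i$ maps onto $\ker L_i'$) and that the norm on each layer is genuinely quadratic, and both are guaranteed by the definition of symbol isomorphism and the cited Remark.
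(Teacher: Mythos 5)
Your argument is correct and follows essentially the same route as the paper's proof: both first use the intertwining relation and surjectivity of the $L_i$ to get $\phi(V_i)=V_i'$, and then deduce that the induced map on each layer is an isometry from the minimization definition of $\|\cdot\|_{V_i}$ together with the fact that $\phi|_{V_1}$ is an isometry. You merely spell out the details (bijection of preimage sets, polarization) that the paper leaves implicit.
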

\begin{proof}
Let $V=\oplus_{i=1}^{m}V_{i}$ and $V'=\oplus_{i=1}^{m'}V'_{i}$ and let $\phi$ be the map given in Definition~\ref{def:equivalencesymbols}. Let $v\in V_{1}$ and $v'=\phi (v) \in V'_{1}$. By the commutation property satisfied by $\phi$ one has
$$L'_{i}(v')=L'_{i}\left(\phi (v)\right)=\phi\left(L_{i} (v)\right),$$
therefore $V'_{i}=\phi(V_{i})$ for every $i\geq 1$ and in particular $m=m'$. As a consequence the map $\phi$ descends to a family of maps between every layer of the stratification as follows: for $v_{i}\in V_{i}$ write  $v_{i}=L_{i}(v)$ for some $v\in V_{1}$ and define $\phi_i:V_{i}\to V'_{i}$ by $\phi_{i}(v_{i}):=L'_{i}(\phi(v))$. Since $\phi$ is an isometry on $V_{1}$, then the map $ \phi|_{V_{i}}: V_{i}\to V'_{i}$  is an isometry on each layer.
\end{proof}

\subsection{The symbol of a geodesic}
Let $\gamma:[0,T]\to M$ be a  geodesic and consider a smooth admissible extension of its tangent vector, namely a vector field $\tanf=X_0+X$, with $X\in\Gamma(\calD)$, such that $\tanf(\g(t))=\dot{\g}(t)$ for every $t\in[0,T]$.

\begin{definition}\label{d:flag}
The \emph{flag of the geodesic} $\gamma:[0,T]\to M$ is the one-parameter family of subspaces
\begin{equation}
\DD_{\gamma(t)}^i :=  \spn\{\calL_\tanf^j (X)|_{\gamma(t)} \mid  X \in \Gamma(\calD),\, j \leq i-1\} \subseteq T_{\gamma(t)} M, \qquad \forall\, i \geq 1,
\end{equation}
for any fixed $t\in[0,T]$, where $\calL_{\tanf}$ denotes the Lie derivative in the direction of $\tanf$.
\end{definition}
Definition~\ref{d:flag} is well posed, namely it does not depend on the choice of the admissible extension $\tanf$ (see \cite[Sec. 3.4]{curvature}). By construction, the flag is a filtration of $T_{\gamma(t)}M$, i.e., $\DD_{\gamma(t)}^i \subseteq \DD_{\gamma(t)}^{i+1}$, for all $i \geq 1$. Moreover, $\DD_{\gamma(t)}^1 = \distr_{\gamma(t)}$. 
The \emph{growth vector} of the geodesic $\gamma(t)$ is the one-parameter family of sequences of integers 
\begin{equation}
\mathcal{G}_{\gamma(t)} := \{\dim \DD_{\gamma(t)}^1,\dim \DD_{\gamma(t)}^2,\ldots\}.
\end{equation}
A geodesic $\gamma(t)$, with growth vector $\mathcal{G}_{\gamma(t)}$, is said to be
\begin{itemize}
\item[(i)] \emph{equiregular} if, for every $i \geq 1$, the integer $\dim \DD_{\gamma(t)}^i$ does not depend on $t$,
\item[(ii)] \emph{ample} if for every $t$ there exists $m=m(t) \geq 1$ such that $\dim \DD_{\gamma(t)}^{m} = \dim T_{\gamma(t)}M$.
\end{itemize}
We define the integers $d_i(t):= \dim \DD_{\gamma(t)}^i - \dim \DD_{\gamma(t)}^{i-1}$, for $i\geq 0$, with the convention $\dim\mathcal{F}_{\g(t)}^0=0$. 

Ample (resp.\ equiregular) geodesics are the microlocal counterpart of bracket-generating  (resp.\ equiregular) distributions. 
 For an equiregular and ample geodesic the integers $d_{i}$ are independent on $t$ and represent the increment of dimension of the flag of the geodesic at each step. The following result is proved in  \cite[Lemma 3.5]{curvature}.
\begin{lemma}\label{l:decreasing}
For any ample, equiregular geodesic, it holds $d_1 \geq d_2 \geq \ldots \geq d_m$.
\end{lemma}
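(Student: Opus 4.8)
The inequality follows once we exhibit, for each $i\ge 1$, a surjective linear map
\[
\bar L_i\colon\ \DD_{\gamma(t)}^{i}/\DD_{\gamma(t)}^{i-1}\ \longrightarrow\ \DD_{\gamma(t)}^{i+1}/\DD_{\gamma(t)}^{i},
\]
for then $d_i=\dim\big(\DD_{\gamma(t)}^{i}/\DD_{\gamma(t)}^{i-1}\big)\ge\dim\big(\DD_{\gamma(t)}^{i+1}/\DD_{\gamma(t)}^{i}\big)=d_{i+1}$. The map is induced by $\calL_{\tanf}$, and the heart of the matter is that $\calL_\tanf$, although not tensorial, still descends to the fibrewise quotients; this is the step where equiregularity is used.

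First I would record that $\calL_\tanf$ restricts to a well-defined first-order operator on vector fields along $\gamma$: if a vector field $W$ on $M$ vanishes on $\gamma$, then $[\tanf,W]|_{\gamma(t)}=0$ for every $t$, since in coordinates $[\tanf,W]^a|_{\gamma(t)}=\frac{d}{dt}\big(W^a\circ\gamma\big)-W^b(\gamma(t))\,\partial_b\tanf^a(\gamma(t))$ (using $\tanf|_\gamma=\dot\gamma$) and both terms vanish. Hence for a smooth vector field $V$ along $\gamma$ the quantity $(\calL_\tanf V)(t):=[\tanf,\tilde V]|_{\gamma(t)}=\dot V(t)-A(t)V(t)$ is independent of the extension $\tilde V$, where $A(t)=\big(\partial_b\tanf^a\big)(\gamma(t))$; iterating, $\calL_\tanf^qX_l|_{\gamma(t)}$ depends only on $X_l|_\gamma$, and Definition~\ref{d:flag} becomes $\DD_{\gamma(t)}^i=\spn\{\calL_\tanf^qX_l|_{\gamma(t)}\mid 0\le q\le i-1,\ 1\le l\le k\}$.

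By equiregularity the family $\DD_\gamma^i:=\bigcup_t\DD_{\gamma(t)}^i$ has constant dimension $r_i$ and, being spanned along $\gamma$ by the smooth fields $\calL_\tanf^qX_l|_\gamma$ ($q\le i-1$), is a smooth vector bundle over $[0,T]$; locally $r_i$ of these fields form a frame, so the module $\Gamma(\DD_\gamma^i)$ of smooth sections is exactly their $C^\infty$-span, and $\calL_\tanf$ maps $\Gamma(\DD_\gamma^i)$ into $\Gamma(\DD_\gamma^{i+1})$. Now set, for $v\in\DD_{\gamma(t)}^i$, $\ \bar L_i([v]):=[(\calL_\tanf V)(t)]\in\DD_{\gamma(t)}^{i+1}/\DD_{\gamma(t)}^i$ for any $V\in\Gamma(\DD_\gamma^i)$ with $V(t)=v$. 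I expect the \textbf{well-posedness of $\bar L_i$ to be the main obstacle}: if $V,V'\in\Gamma(\DD_\gamma^i)$ agree at $t$, then $W:=V-V'$ is a section of $\DD_\gamma^i$ vanishing at $t$, so $\dot W(t)\in\DD_{\gamma(t)}^i$ (write $W$ in a local frame of $\DD_\gamma^i$), and $A(t)W(t)=0$, hence $(\calL_\tanf W)(t)=\dot W(t)\in\DD_{\gamma(t)}^i$; thus the class $[(\calL_\tanf V)(t)]$ does not change. Likewise, if $v\in\DD_{\gamma(t)}^{i-1}$ one may choose $V\in\Gamma(\DD_\gamma^{i-1})$, giving $(\calL_\tanf V)(t)\in\DD_{\gamma(t)}^{i}$; so $\bar L_i$ descends to $\DD_{\gamma(t)}^i/\DD_{\gamma(t)}^{i-1}$ and is linear.

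Finally, $\DD_{\gamma(t)}^{i+1}=\DD_{\gamma(t)}^i+\spn\{\calL_\tanf^iX_l|_{\gamma(t)}\mid 1\le l\le k\}$, and $\calL_\tanf^iX_l|_{\gamma(t)}=(\calL_\tanf V_l)(t)$ with $V_l:=\calL_\tanf^{i-1}X_l|_\gamma\in\Gamma(\DD_\gamma^i)$, so every generator of $\DD_{\gamma(t)}^{i+1}/\DD_{\gamma(t)}^i$ lies in the image of $\bar L_i$; hence $\bar L_i$ is onto and $d_i\ge d_{i+1}$. Ampleness is used only to guarantee that the flag stabilizes at a finite step $m$, so that $d_1\ge d_2\ge\cdots\ge d_m$ is the complete list of increments. (This is the argument of \cite[Lemma~3.5]{curvature}.)
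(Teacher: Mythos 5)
Your proof is correct and follows the same route as the paper's source: the paper defers to \cite[Lemma 3.5]{curvature}, and both that proof and the machinery the paper itself sets up in Section \ref{s:gfyd} rest on exactly the surjective quotient maps $\calL_\tanf:\calF^i_{\g(t)}/\calF^{i-1}_{\g(t)}\to \calF^{i+1}_{\g(t)}/\calF^i_{\g(t)}$ that you construct, with equiregularity used precisely where you use it (constant rank makes the flag a smooth bundle along $\g$, so the Lie derivative descends to the quotients). Your verification of well-posedness and surjectivity is sound, so nothing further is needed.
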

\begin{definition}\label{def:geodesic dimension}
Given an ample and equiregular geodesic with initial covector $\lam\in T_{x}^{*}M$ we define
\begin{equation}\label{eq:Nlambda}
\calN(\lam):=\sum_{i=1}^m (2i-1) d_i.
\end{equation}
\end{definition}

Fix an ample and equiregular geodesic $\gamma:[0,T]\to M$ and let $\tanf$ be an admissible extension of its tangent vector. For every vector $v\in\calF^i_{\g(t)}$, consider a smooth vector field $X$ such that $X|_{\g(t)}=v$ and $X|_{\g(s)}\in\calF^i_{\g(s)}$ for every $s\in[0,T]$. The Lie derivative $\calL_{\tanf}$ in the direction of $\tanf$ induces a well defined linear map
\begin{equation}\label{eq:linearissime}
\calL_{\tanf}:\calF^i_{\g(s)} \to \calF^{i+1}_{\g(t)}/\calF^i_{\g(t)},\qquad v\mapsto [\tanf,X]\big|_{\g(t)} \mod \calF^i_{\g(t)}.
\end{equation}
Indeed a direct computation shows that this map does not depend on the admissible extension $\tanf$ and on the extension of $X$, under the equiregularity assumption on $\g$. Moreover the kernel of \eqref{eq:linearissime} is given by $\calF^{i-1}_{\g(t)}$. So one obtains  well-defined  linear surjective maps (denoted by the same symbol)
\begin{equation}\label{eq:calL}
\calL_\tanf:\calF^i_{\g(t)}/\calF^{i-1}_{\g(t)}\to \calF^{i+1}_{\g(t)}/\calF^i_{\g(t)}, \qquad i\geq 1.
\end{equation}
In particular $\calL_\tanf^i:\calD_{\g(t)}\to \calF^{i+1}_{\g(t)}/\calF^i_{\g(t)}$ are surjective linear maps defined on the distribution $\calD_{\g(t)}=\calF^{1}_{\g(t)}$.
\bdeff \label{def:symbol}
Given an ample and equiregular geodesic $\g:[0,T]\to M$ we define its \emph{symbol} at $\g(t)$, denoted by $S_{\g(t)}$, as the pair
\bi
\iii[(i)] the graded vector space: $\mathrm{gr}_{\g(t)}(\mc{F}):=\bigoplus_{i=0}^{m-1} \mc{F}^{i+1}_{\g(t)}/\mc{F}^{i}_{\g(t)}$,
\iii[(ii)] the  family of operators: $\calL^{i}_{\tanf}:\distr_{\g(t)}\to \mc{F}^{i+1}_{\g(t)}/\mc{F}^{i}_{\g(t)}$ for $i\geq 1$,
\ei
where $\tanf$ denotes any admissible extension of $\dot \g$.
\edeff

\begin{remark}
Notice that, for the symbol $(V,L)$ associated with an ample and equiregular geodesic, the family of maps $L=\{L_{i}\}_{i=1}^{m}$ where $L_{i}=\calL^{i}_{T}$ satisfies the factorization property $\ker L_{i}\subset \ker L_{i+1}$ for all $i\geq 1$.
\end{remark}

%%%%%%%%%%%%%%%%%%%%%%%%%%%%%%%%%%%%%%%%%%%%%%%%%%%%%%%%%%%%%%%%%%%%%%%%%%%%%%%%%%%%%%%%%%%%%%%%%%%%%%%%%%%%%%%%%%%%%%%%%%%%%%%%%%%%%%%%%%%%%%%%
\section{Young diagram, canonical frame and Jacobi fields} \label{s:cfjf}
In this section we briefly recall how to define the canonical frame that can be associated with any ample and equiregular geodesic, introduced in \cite{lizel}. We follow the approach contained in \cite{curvature,barilaririzzicomp}, where the interested reader can find more details.

For an ample, equiregular geodesic
%, the sequence of dimension stabilises, namely $\dim\DD_\gamma^m = \dim\DD_\gamma^{m+j} = n$ for $j\geq 0$, and we write $\mathcal{G}_\gamma = \{\dim\DD_\gamma^1,\ldots,\dim\DD_\gamma^m\}$. Thus, we associate with any ample, equiregular geodesic its Young diagram as follows. Recall that $d_{i}=\dim\DD_\gamma^i-\dim\DD_\gamma^{i-1}$ defines a decreasing sequence by Lemma~\ref{l:decreasing}. Then 
we can build a tableau $\y$ with $m$ columns of length $d_{i}$, for $i=1,\ldots,m$, as follows:
%\begin{figure}[h!]
%\centering
\begin{center}
\begin{tikzpicture}[x=0.26mm, y=0.26mm, inner xsep=0pt, inner ysep=0pt, outer xsep=0pt, outer ysep=0pt]
\path[line width=0mm] (61.05,70.00) rectangle +(127.98,100.00);
\draw(120.00,159.00) node[anchor=base]{\fontsize{9.39}{11.27}\selectfont $\ldots$};
\draw(120.00,139.00) node[anchor=base]{\fontsize{9.39}{11.27}\selectfont $\ldots$};
\draw(80.00,115.00) node[anchor=base]{\fontsize{9.39}{11.27}\selectfont $\vdots$};
\draw(100.00,115.00) node[anchor=base]{\fontsize{9.39}{11.27}\selectfont $\vdots$};
\definecolor{L}{rgb}{0,0,0}
\path[line width=0.30mm, draw=L] (130.00,170.00) -- (130.00,130.00) -- (150.00,130.00) -- (150.00,150.00) -- (170.00,150.00) -- (170.00,170.00) -- cycle;
\path[line width=0.30mm, draw=L] (150.00,170.00) -- (150.00,150.00);
\path[line width=0.30mm, draw=L] (130.00,150.00) -- (150.00,150.00);
\draw(122.00,95.00) node[anchor=base west]{\fontsize{9}{10.24}\selectfont \# boxes = $d_i$};
\definecolor{F}{rgb}{0.565,0.933,0.565}
\path[line width=0.30mm, draw=L, fill=F] (90.00,170.00) [rotate around={270:(90.00,170.00)}] rectangle +(40.00,20.00);
\path[line width=0.30mm, draw=L] (90.00,170.00) -- (70.00,170.00) -- (70.00,130.00) -- (90.00,130.00);
\path[line width=0.30mm, draw=L] (70.00,150.00) -- (110.00,150.00);
\path[line width=0.30mm, draw=L, fill=F] (90.00,110.00) [rotate around={270:(90.00,110.00)}] rectangle +(20.00,20.00);
\path[line width=0.30mm, draw=L] (90.00,70.00) [rotate around={90:(90.00,70.00)}] rectangle +(20.00,20.00);
\path[line width=0.30mm, draw=L] (70.00,90.00) -- (70.00,110.00) -- (90.00,110.00);
\end{tikzpicture}%

\end{center}
The total number of boxes in $\y$ is $n=\dim M=\sum_{i=1}^m d_i$. %Let us discuss some examples.

Consider an ample, equiregular geodesic, with Young diagram $\y$, with $k$ rows, and denote the length of the rows by $n_1,\ldots,n_k$. Indeed $n_1+\ldots+n_k = n$. We are going to introduce a moving frame on $T_{\lambda(t)}(T^*M)$ indexed by the boxes of the Young diagram. The notation $ai \in \y$ denotes the generic box of the diagram, where $a=1,\ldots,k$ is the row index, and $i=1,\ldots,n_a$ is the progressive box number, starting from the left, in the specified row. We employ letters $a,b,c,\dots$ for rows, and $i,j,h,\dots$ for the position of the box in the row. 

\begin{figure}[ht]
\centering
\begin{tikzpicture}[x=0.30mm, y=0.30mm, inner xsep=0pt, inner ysep=0pt, outer xsep=0pt, outer ysep=0pt]
\path[line width=0mm] (-75.00,91.40) rectangle +(300.63,71.04);
\definecolor{L}{rgb}{0,0,0}
\definecolor{F}{rgb}{0.565,0.933,0.565}
\path[line width=0.60mm, draw=L, fill=F] (-53.50,91.50) rectangle +(17.92,70.94);
\path[line width=0.15mm, draw=L] (-53.30,127.00) -- (-35.58,127.00);
\path[line width=0.15mm, draw=L] (-53.30,109.29) -- (-35.58,109.29);
\path[line width=0.15mm, draw=L] (-53.30,91.57) -- (-35.58,91.57);
\path[line width=0.15mm, draw=L] (-53.30,144.72) -- (-35.58,144.72);
\path[line width=0.60mm, draw=L, fill=F] (88.32,91.67) rectangle +(17.82,70.76);
\definecolor{F}{rgb}{1,0,0}
\path[line width=0.60mm, draw=L, fill=F] (88.43,144.72) rectangle +(35.43,17.72);
\path[line width=0.60mm, draw=L] (106.14,162.44) -- (106.14,144.72);
\path[line width=0.15mm, draw=L] (88.43,127.00) -- (106.14,127.00);
\path[line width=0.15mm, draw=L] (88.43,109.29) -- (106.14,109.29);
\path[line width=0.15mm, draw=L] (88.43,91.57) -- (106.14,91.57);
\definecolor{F}{rgb}{0.686,0.933,0.933}
\path[line width=0.60mm, draw=L, fill=F] (247.87,91.55) [rotate around={0:(247.87,91.55)}] rectangle +(17.71,17.73);
\definecolor{F}{rgb}{0.565,0.933,0.565}
\path[line width=0.60mm, draw=L, fill=F] (247.87,109.29) rectangle +(35.43,35.43);
\definecolor{F}{rgb}{1,0,0}
\path[line width=0.60mm, draw=L, fill=F] (247.87,144.72) rectangle +(70.86,17.72);
\path[line width=0.60mm, draw=L] (265.58,162.44) -- (265.58,109.29);
\path[line width=0.15mm, draw=L] (247.87,127.00) -- (283.30,127.00);
\path[line width=0.15mm, draw=L] (247.87,91.57) -- (265.58,91.57);
\path[line width=0.60mm, draw=L] (283.30,162.44) -- (283.30,144.72);
\path[line width=0.60mm, draw=L] (301.01,162.44) -- (301.01,144.72);
\draw(-20.85,124.71) node[anchor=base west]{\fontsize{9.39}{11.27}\selectfont level 1};
\path[line width=0.15mm, draw=L] (128.00,162.00) -- (133.00,162.00) -- (133.00,145.00) -- (128.00,145.00);
\path[line width=0.15mm, draw=L] (128.00,145.00) -- (133.00,145.00) -- (133.00,92.00) -- (128.00,92.00);
\draw(139.00,150.00) node[anchor=base west]{\fontsize{9.39}{11.27}\selectfont level 1};
\draw(139.00,116.00) node[anchor=base west]{\fontsize{9.39}{11.27}\selectfont level 2};
\path[line width=0.15mm, draw=L] (324.00,162.00) -- (329.00,162.00) -- (329.00,145.00) -- (324.00,145.00);
\draw(334.00,150.00) node[anchor=base west]{\fontsize{9.39}{11.27}\selectfont level 1};
\path[line width=0.15mm, draw=L] (324.00,145.00) -- (329.00,145.00) -- (329.00,110.00) -- (324.00,110.00);
\draw(334.00,123.00) node[anchor=base west]{\fontsize{9.39}{11.27}\selectfont level 2};
\path[line width=0.15mm, draw=L] (325.00,110.00) -- (329.00,110.00) -- (329.00,92.00) -- (324.00,92.00);
\draw(334.00,98.00) node[anchor=base west]{\fontsize{9.39}{11.27}\selectfont level 3};
\draw(68.00,125.00) node[anchor=base west]{\fontsize{9}{10.24}\selectfont (b)};
\draw(227.00,125.00) node[anchor=base west]{\fontsize{9}{10.24}\selectfont (c)};
\draw(-75.00,124.00) node[anchor=base west]{\fontsize{9}{10.24}\selectfont (a)};
\path[line width=0.15mm, draw=L] (-33.00,162.40) -- (-28.00,162.40) -- (-28.00,91.40) -- (-33.00,91.40);
\end{tikzpicture}%
\caption{Levels (shaded regions) and superboxes (delimited by bold lines) for different Young diagrams: (a) Riemannian, (b) contact, (c) a more general structure.}\label{f:Yd2}
\end{figure}
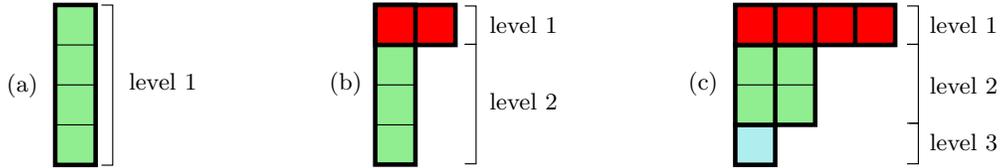
We collect the rows with the same length in $\y$, and we call them \emph{levels} of the Young diagram.  In particular, a level is the union of $r$ rows $\y_1,\ldots,\y_r$, and $r$ is called the \emph{size} of the level. The set of all the boxes $ai \in\y$ that belong to the same column and the same level of $\y$ is called \emph{superbox}. We use Greek letters $\alpha,\beta,\dots$ to denote superboxes. Notice that two boxes $ai$, $bj$ are in the same superbox if and only if $ai$ and $bj$ are in the same column of $\y$ and in possibly distinct rows but with same length, i.e.~if and only if $i=j$ and $n_a = n_b$ (see Fig.~\ref{f:Yd2}).

In what follows, for $V(t)$ a vector field along an integral line $\lambda(t)$ of the Hamiltonian flow, we denote by 
\begin{equation}
\dot{V}(t) := \left.\frac{d}{d\eps}\right|_{\eps=0} e^{-\eps \vec{H}}_* V(t+\eps).
\end{equation}
the Lie derivative of $V$  in the direction of $\vec{H}$. The following theorem is proved in \cite{lizel}.
\begin{theorem}\label{p:can} Assume $\lam(t)$ is the lift of an ample and equiregular geodesic $\g(t)$ with Young diagram $\y$. Then
there exists a smooth moving frame $\{E_{ai},F_{ai}\}_{ai \in \y}$ along $\lambda(t)$ such that
\begin{itemize}
\item[(i)] $\pi_{*}E_{ai}|_{\lambda(t)}=0$.
\item[(ii)] It is a Darboux basis, namely
\[
\sigma(E_{ai},E_{bj}) = \sigma(F_{ai},F_{bj}) = \sigma(E_{ai},F_{bj}) -\delta_{ab}\delta_{ij}=0, \qquad ai,bj \in \y.
\]
\item[(iii)] The frame satisfies the structural equations
\begin{equation}\label{zelframe}
\displaystyle\begin{cases}	
\dot{E}_{ai} = E_{a(i-1)} & a = 1,\dots,k,\quad i = 2,\dots, n_a,\\[0.1cm]
\dot{E}_{a1} = -F_{a1} & a= 1,\dots,k, \\[0.1cm]
\dot{F}_{ai} = \sum_{bj \in \y} R_{ai,bj}(t) E_{bj} - F_{a(i+1)} & a=1,\dots,k,\quad i = 1,\dots,n_a-1,\\[0.1cm]
\dot{F}_{an_a} = \sum_{bj \in \y} R_{an_a,bj}(t) E_{bj}  & a = 1, \dots,k,
\end{cases}
\end{equation}
for some smooth family of $n\times n$ symmetric matrices $R(t)$, with components $R_{ai,bj}(t) = R_{bj,ai}(t)$, indexed by the boxes of the Young diagram $\y$. The matrix $R(t)$ is \emph{normal}  in the sense of \cite{lizel}.% (see Appendix~\ref{s:appendixnormal}). 
\end{itemize}
If $\{\wt{E}_{ai},\wt{F}_{ai}\}_{ai \in \y}$ is another smooth moving frame along $\lambda(t)$ satisfying (i)-(iii), with some normal matrix $\wt{R}(t)$, then for any superbox $\alpha$ of size $r$ there exists an orthogonal constant $r\times r$ matrix $O^\alpha$ such that
\begin{equation}
\wt{E}_{ai} = \sum_{bj \in \alpha} O^\alpha_{ai,bj} E_{bj}, \qquad \wt{F}_{ai} = \sum_{bj \in \alpha} O^\alpha_{ai,bj} F_{bj}, \qquad ai \in \alpha.
\end{equation}
\end{theorem}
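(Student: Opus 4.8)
The plan is to rephrase the statement as a normalization result for the \emph{Jacobi curve} attached to $\lam(t)$ in a Lagrange Grassmannian, and then to build the frame by an inductive symplectic Gram--Schmidt procedure. First I would set $\Sigma:=T_{\lam}(T^{*}M)$, the $2n$-dimensional symplectic space with canonical form $\sigma$, and consider the smooth curve of Lagrangian subspaces $J(t):=e^{-t\vec H}_{*}\V_{\lam(t)}\subset\Sigma$, with $J(0)=\V_{\lam}$. The subspaces spanned by the first $i$ derivatives of $J(t)$ form a flag whose successive quotients have dimensions $d_{1}\geq d_{2}\geq\cdots$ as in Lemma~\ref{l:decreasing}; this is the Lagrange--Grassmannian counterpart of the flag of Definition~\ref{d:flag} (cf.\ \cite{curvature,barilaririzzicomp}). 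Ampleness means the derivatives of $J$ eventually span all of $\Sigma$, equiregularity means these dimensions are $t$-independent, and the Young diagram $\y$ records precisely the resulting combinatorial type. So the theorem reduces to the existence, and uniqueness up to the stated orthogonal freedom, of a \emph{normal moving frame} adapted to $J(t)$, which is the content of \cite{lizel}.

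For existence I would exploit the rigidity of the structural equations \eqref{zelframe}: the whole frame is generated by a few ``seed'' fields. Starting from vertical vector fields $E_{an_{a}}$ whose values span a subspace realizing the deepest layer of the filtration inside $\V_{\lam(t)}$, the relations $\dot E_{ai}=E_{a(i-1)}$ produce all remaining $E_{ai}$ by iterated Lie differentiation along $\vec H$, and then $\dot E_{a1}=-F_{a1}$ together with $\dot F_{ai}=\sum_{bj}R_{ai,bj}E_{bj}-F_{a(i+1)}$ determine all the $F_{ai}$ once the symmetric matrix $R(t)$ is fixed. I would then pin down the seeds and $R(t)$ by imposing, in order: verticality, which is automatic for the $E_{ai}$; the Darboux relations, a symplectic orthonormalization that is solvable at each $t$ because ampleness makes $\sigma$ nondegenerate on the relevant graded spaces $\calF^{i+1}_{\g(t)}/\calF^{i}_{\g(t)}$; and the normality pattern of zeros of $R(t)$, which kills the residual gauge and is obtained by a triangular recursion. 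Equiregularity guarantees that all the subspaces and ranks involved depend smoothly on $t$, so the frame is smooth.

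For uniqueness, given a second frame $\{\wt E_{ai},\wt F_{ai}\}$ satisfying (i)--(iii) with normal matrix $\wt R$, I would write $\wt E_{ai}=\sum_{bj}(A_{ai,bj}E_{bj}+B_{ai,bj}F_{bj})$. Verticality of the $\wt E_{ai}$ forces $B\equiv0$, and the Darboux condition forces the transition matrix of the $\wt F_{ai}$ to be $A^{-\top}$, so $A(t)$ is symplectic-orthogonal. Differentiating $\wt E_{ai}=\sum_{bj}A_{ai,bj}E_{bj}$ and substituting the structural equations for both frames yields $\dot A\equiv0$ and forces $A$ to preserve the grading, hence to map each superbox to itself; thus $A=\bigoplus_{\alpha}O^{\alpha}$ with each $O^{\alpha}$ constant orthogonal, and correspondingly $\wt R=O^{\alpha}R(O^{\alpha})^{\top}$ blockwise.

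The hard part will be the existence step: showing that the Darboux and normality conditions can be met simultaneously and leave exactly the superbox freedom, with all choices smooth in $t$. I expect to handle it by the inductive scheme of \cite{lizel}, reducing at each stage to the nondegeneracy of $\sigma$ on the graded spaces $\calF^{i+1}_{\g(t)}/\calF^{i}_{\g(t)}$, which is exactly where ampleness enters decisively.
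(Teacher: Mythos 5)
The paper does not actually prove this theorem: it is stated as a recalled result, with the proof attributed entirely to \cite{lizel} (and the presentation following \cite{curvature,barilaririzzicomp}). So there is no internal argument to compare yours against; the benchmark is whether your sketch stands on its own. Your framing is the right one --- the reduction to a normalization problem for the Jacobi curve $J(t)=e^{-t\vec H}_{*}\V_{\lam(t)}$ in the Lagrange Grassmannian of $(T_{\lam}(T^{*}M),\sigma)$, with ampleness and equiregularity translating into the combinatorics of the Young diagram --- and this is indeed how \cite{lizel} and \cite{curvature} set things up.

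As a proof, however, the proposal has a genuine gap precisely where the content of the theorem lies. The existence step --- showing that verticality, the Darboux relations, the structural equations and the normality conditions on $R(t)$ can be satisfied simultaneously, smoothly in $t$, and that together they cut the gauge freedom down to exactly one constant orthogonal matrix per superbox --- is not carried out; you explicitly defer it to ``the inductive scheme of \cite{lizel}''. That \emph{is} the theorem, so what you have is a correct road map to the citation rather than a proof. There is also a smaller gap in the uniqueness argument: the Darboux relations alone do not force $\wt F_{ai}$ to be a combination of the $F_{bj}$ with transition matrix $A^{-\top}$ --- they leave an arbitrary symmetric $E$-component free --- and differentiating the structural equations without using normality of both $R$ and $\wt R$ does not yield $\dot A\equiv 0$; normality is exactly the condition that kills the residual non-constant gauge. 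Finally, the reason the invariance blocks are superboxes (same column \emph{and} rows of equal length) rather than merely columns comes from the chain relations $\dot E_{ai}=E_{a(i-1)}$ coupling boxes along a row; your sketch asserts the superbox structure but does not extract it from the equations.
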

The explicit condition for the matrix $R(t)$ to be \emph{normal} can be found in \cite[Appendix F]{curvature} (cf.\ also the original definition in \cite{lizel}).
\begin{remark}\label{rmk:notation}
For $a=1,\dots,k$, we denote by $E_a$ the $n_a$-dimensional row vector
$
E_a = (E_{a1},\dots,E_{an_a}),
$
with analogous notation for $F_a$. Denote then by $E$  the $n$-dimensional row vector
$
E = (E_1,\dots,E_k),
$
and similarly for $F$. Then, we rewrite the system \eqref{zelframe} as follows %(compare with~\eqref{eq:Jacobiframe})
\begin{equation}\label{eq:Jacobiframe2}
\begin{pmatrix}
\dot{E}^* \\
\dot{F}^*
\end{pmatrix} = 
\begin{pmatrix}
C^*_1 & -C_2 \\
R(t) & -C_1
\end{pmatrix} \begin{pmatrix}
E^*\\
F^*
\end{pmatrix},
\end{equation}
%For $a=1,\dots,k$, we denote by $E_a$ the $n_a$-dimensional column vector
%$
%E_a = (E_{a1},E_{a2},\dots,E_{an_a})^*,
%$
%with  analogous notation for $F_a$. Similarly, $E$ denotes the $n$-dimensional column vector
%$
%E = (E_1^*,\dots,E_k^*)^*,
%$
%and similarly for $F$. Then, we rewrite the system \eqref{zelframe} as follows %(compare with~\eqref{eq:Jacobiframe})
%\begin{equation}\label{eq:Jacobiframe2}
%\begin{pmatrix}
%\dot{E} \\
%\dot{F}
%\end{pmatrix} = 
%\begin{pmatrix}
%C^*_1 & -C_2 \\
%R(t) & -C_1
%\end{pmatrix} \begin{pmatrix}
%E\\
%F
%\end{pmatrix},
%\end{equation}
where $C_1 = C_1(\y)$, $C_2=C_2(\y)$ are $n\times n$ matrices, depending on the Young diagram $\y$, defined as follows:
 for $a,b = 1,\dots,k$, $i=1,\dots,n_a$, $j=1,\dots,n_b$:
\begin{equation}
[C_1]_{ai,bj} := \delta_{ab}\delta_{i,j-1}, \label{eq:G1},\qquad
[C_2]_{ai,bj} := \delta_{ab}\delta_{i1}\delta_{j1}. %\label{eq:G2} .
\end{equation}
It is convenient to see $C_1$ and $C_2$ as block diagonal matrices:
\begin{equation}
C_i(\y) := \begin{pmatrix} 
C_i(\y_1) &  &    \\
 &  \ddots & \\
 &   & C_i(\y_k)
\end{pmatrix}, \qquad i =1,2,
\end{equation}
the $a$-th block being the $n_a\times n_a$ matrices
\begin{equation}%\label{eq:Gamma}
C_1(\y_a) := \begin{pmatrix}
0 & \mathbb{I}_{n_a-1} \\
0 & 0
\end{pmatrix} , 
\qquad C_2(\y_a) := \begin{pmatrix}
1 & 0 \\
0 & 0_{n_a-1}
\end{pmatrix},
\end{equation}
where $\mathbb{I}_{m}$ is the $m \times m$ identity matrix and $0_{m}$ is the $m \times m$ zero matrix. %Notice that the matrices $C_{1},C_{2}$ satisfy the  rank condition
%\begin{equation}\label{eq:Kalman}
%\rank\{C_{2},C_{1}C_{2},\ldots,C_{1}^{n-1}C_{2}\}=n.
%\end{equation}
%Analogously, the matrices $C_{i}(D_{a})$ satisfy \eqref{eq:Kalman} with $n=n_{a}$.
\end{remark}

\subsection{The Jacobi equation}
%For any vector field $V(t)$ along an extremal $\lambda(t)$ of the Hamiltonian flow, we denote by 
%\begin{equation}
%\dot{V}(t) := \left.\frac{d}{d\eps}\right|_{\eps=0} e^{-\eps \vec{H}}_* V(t+\eps).
%\end{equation}
% the Lie derivative in the direction of $\vec{H}$.
A vector field $\J(t)$ along $\lam(t)$ is called a \emph{Jacobi field} if it satisfies 
\begin{equation}\label{eq:defJF}
\dot{\J} = 0.
\end{equation}
The space of solutions of \eqref{eq:defJF} is a $2n$-dimensional vector space. The projections $J=\pi_{*}\J$ are vector fields on $M$ corresponding to one-parameter variations of $\g(t)=\pi(\lam(t))$ through geodesics; in the Riemannian case (without drift field) they coincide with the classical Jacobi fields.

We intend to write \eqref{eq:defJF} using the natural symplectic structure $\sigma$ of $T^{*}M$ and the canonical frame.  First, observe that on $T^*M$ there is a natural smooth sub-bundle of Lagrangian\footnote{A Lagrangian subspace $L \subset \Sigma$ of a symplectic vector space $(\Sigma,\sigma)$ is a subspace with $2\dim L = \dim\Sigma$ and $\sigma|_{L} = 0$.} spaces:
\begin{equation}
\ver_{\lambda} := \ker \pi_*|_{\lambda} = T_\lambda(T^*_{\pi(\lambda)} M).
\end{equation}
We call this the \emph{vertical subspace}. Then, let $\{E_i(t),F_i(t)\}_{i=1}^{n}$ be a canonical frame along $\lambda(t)$.  The fields $E_1,\ldots,E_n$ belong to the vertical subspace. In terms of this frame, $\J(t)$ has components $(p(t),x(t)) \in \R^{2n}$:
\begin{equation}
\J(t) = \sum_{i=1}^n p_{i}(t) E_{i}(t) + x_{i}(t) F_{i}(t).
\end{equation}
%The elements of the frame satisfy Eq. \eqref{eq:Jacobiframe2}
%\begin{equation}\label{eq:Jacobiframe}
%\begin{pmatrix}
%\dot{E} \\
%\dot{F}
%\end{pmatrix} = 
%\begin{pmatrix}
%C_1(t)^{*} & -C_2(t) \\
%R(t) & -C_1(t)
%\end{pmatrix} \begin{pmatrix}
%E\\
%F
%\end{pmatrix},
%\end{equation}
%for some smooth families of $n\times n$ matrices $C_1(t),C_2(t),R(t)$, where $C_2(t) = C_2(t)^*$ and $R(t)= R(t)^*$. We  stress  that the particular structure of the equations is implied solely by the fact that the frame is Darboux, that is
%\begin{equation}
%\sigma(E_i,E_j) = \sigma(F_i,F_j) = \sigma(E_i,F_j) -\delta_{ij} = 0, \qquad i,j=1,\ldots,n.
%\end{equation}
%Moreover, $C_2(t) \geq 0$ as a consequence of the non-negativity of the sub-Riemannian Hamiltonian. To see this, for a bilinear form $B: V\times V \to \R$ and $n$-tuples $v,w \in V$ let $B(v,w)$ denote the matrix $B(v_i,w_j)$. With this notation
%\begin{equation}
%C_2(t) = \sigma(\dot{E},E)|_{\lambda(t)} = 2H(E,E)|_{\lambda(t)} \geq 0,
%\end{equation}
%where we identified $\ver_{\lambda(t)} \simeq T_{\gamma(t)}^*M$ and we see the Hamiltonian as a symmetric bilinear form on fibers. In the Riemannian case, $C_2(t) > 0$.  
In turn, the Jacobi equation, written in terms of the components $(p(t),x(t))$, becomes
\begin{equation}
\begin{pmatrix}\label{eq:Jacobicoord}
\dot{p} \\ \dot{x}
\end{pmatrix} = \begin{pmatrix} - C_1 & -R(t) \\ C_2 & C_1^{*}
\end{pmatrix} \begin{pmatrix}
p \\ x
\end{pmatrix}.
\end{equation}
%\item The sub-Riemannian directional curvature, defined as the quadratic form $\Rcan_{\gamma(t)}: T_{\gamma(t)} M \to \R$ whose representative matrix, in terms of an orthonormal frame $\{X_{ai}\}_{ai \in \y}$ is $R_{ai,bj}(t)$.
%\item For each superbox $\alpha$, the \emph{sub-Riemannian Ricci curvatures}
%\begin{equation}
%\Ric_{\gamma(t)}^\alpha:= \trace \left(\Rcan_{\gamma(t)}\big|_{S^{\alpha}_{\gamma(t)}}\right)=\sum_{ai \in \alpha} \Rcan_{\gamma(t)}(X_{ai}),
%\end{equation}
%which is precisely the partial trace of $\Rcan_{\gamma(t)}$, identified through the inner product with an operator on $T_{\gamma(t)}M$, on the subspace $S^\alpha_{\gamma(t)} \subseteq T_{\gamma(t)}M$.
%\end{itemize}
%
%The Jacobi equation, written in terms of the components $(p(t),x(t))$ with respect to a canonical Darboux frame $\{E_{ai},F_{ai}\}_{ai \in \y}$, becomes
%\begin{equation}
%\begin{pmatrix}\label{eq:Jacobicoord2}
%\dot{p} \\ \dot{x}
%\end{pmatrix} = \begin{pmatrix} - C_1 & -R(t) \\ C_2 & C_1^*
%\end{pmatrix} \begin{pmatrix}
%p \\ x
%\end{pmatrix}.
%\end{equation}
This is a generalization of the classical Jacobi equation seen as first-order equation for fields on the cotangent bundle. Its  structure depends on the Young diagram of the geodesic through the matrices $C_{i}(\y)$, while the remaining  invariants are contained in the curvature matrix $R(t)$. Notice that this includes the Riemannian case, where $\y$ is the same for every geodesic, with $C_{1}=0$ and $C_{2}=\mathbb{I}$.

\subsection{Geodesic cost and curvature operator}
In this section we define the geodesic cost and the curvature operator associated with a geodesic $\gamma$. This operator generalizes the Riemannian sectional curvature operator.

\begin{definition}
Let $x_0\in M$ and consider an ample geodesic $\gamma$ such that $\gamma(0)=x_0$. The \emph{geodesic cost} associated with $\gamma$ is the family of functions
$$c_t(x):=-S_t(x,\gamma(t)),\qquad x\in M, t>0,$$
where $S_{t}$ is the value function defined in \eqref{eq:value}.
\end{definition}
\begin{figure}
\begin{center}
\scalebox{0.8} 
{
\begin{pspicture}(0,-1.6)(6.42,1.6)
\definecolor{colour0}{rgb}{0.0,0.4,1.0}
\definecolor{colour1}{rgb}{0.0,0.6,1.0}
\psdots[linecolor=black, dotsize=0.12](0.8,-0.8)
\psbezier[linecolor=black, linewidth=0.04](0.8,-0.8)(2.8,-0.4)(6.4,0.8)(6.4,1.6)
\psdots[linecolor=colour0, dotsize=0.12](4.8,0.4)
\psbezier[linecolor=colour1, linewidth=0.04, linestyle=dashed, dash=0.17638889cm 0.10583334cm](4.8,0.4)(3.2,0.8)(1.6,0.0)(0.8,-0.4)
\psdots[linecolor=colour1, dotsize=0.12](0.8,-0.4)
\pscircle[linecolor=black, linewidth=0.02, linestyle=dotted, dotsep=0.10583334cm, dimen=outer](0.8,-0.8){0.8}
\rput[bl](0.4,-1.2){$x_0$}
\rput[bl](4.8,0.0){$\gamma(t)$}
\rput[bl](0.4,-0.4){$x$}
\rput[bl](2.4,-1.6){$x\mapsto -S_t(x,\gamma(t))$}
\end{pspicture}
}
\caption{The geodesic cost function}%
\label{fig:geodesic cost}%
\end{center}
\end{figure}
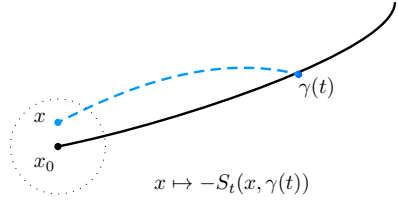
By \cite[Theorem 4.2]{curvature}, given an ample curve $\gamma(t)=\pi(e^{t\vec{H}}(\lambda))$ starting at $x_{0}$, the geodesic cost function $c_{t}(x)$ is smooth in a neighborhood of $x_0$ and for $t>0$ sufficiently small. Moreover the differential $d_{x_0}c_t=\lambda$ for every $t$ small. 

Let $\dot{c}_t=\frac{\partial}{\partial t}c_{t}$ denote the derivative with respect to $t$ of the geodesic cost. Then $\dot{c}_t$ has a critical point at $x_0$ and its second differential $d^2_{x_0}\dot{c}_t:T_{x_0}M\to\bbR$ is defined as $$d^2_{x_0}\dot{c}_t(v)=\left.\frac{d^2}{ds^2}\right|_{s=0}\dot{c}_t(\gamma(s)), \qquad \gamma(0)=x_0,\quad \dot{\gamma}(0)=v.$$
We restrict the second differential of $\dot{c}_t$ to the distribution $\calD_{x_0}$ and we define the following family of symmetric operators $\calQ_{\lambda}(t):\calD_{x_0}\to\calD_{x_0}$, for small $t$, associated with $d^2_{x_0}\dot{c}_t$ through the inner product defined on $\calD_{x_{0}}$:
\begin{equation}
\label{eq:volume Q}
d^2_{x_0}\dot{c}_t(v):=\la \calQ_{\lambda}(t)v|v\ra_{x_0}, \qquad t>0, v\in\calD_{x_0}.
\end{equation}
The following result is contained in \cite[Theorem A]{curvature}. 
\begin{theorem}
Let $\g:[0,T]\to M$ be an ample geodesic with initial covector $\lambda\in T_{x_0}^*M$ and let $\calQ_{\lambda}(t):\calD_{x_0}\to\calD_{x_0}$ be defined by \eqref{eq:volume Q}. Then $t\mapsto t^2\calQ_{\lambda}(t)$ can be extended to a smooth family of symmetric operators on $\calD_{x_0}$ for small $t\geq 0$. Moreover
$$\calI_{\lambda}:=\lim_{t\searrow 0}t^2\calQ_{\lambda}(t)\geq \bbI>0, \qquad \left.\frac{d}{dt}\right|_{t=0}t^2\calQ_{\lambda}(t)=0,$$
where $\bbI$ is the identity operator. In particular, there exists a symmetric operator $\calR_{\lambda}:\calD_{x_0}\to\calD_{x_0}$ such that
\begin{equation}\label{eq:R}
\calQ_{\lambda}(t)=\frac{1}{t^2}\calI_{\lambda}+\frac{1}{3}\calR_{\lambda}+O(t),\qquad t>0.
\end{equation}
\end{theorem}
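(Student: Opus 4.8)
The plan is to follow the argument of \cite[Sections 4--5]{curvature} and reduce the statement to an explicit computation with the canonical frame of Theorem~\ref{p:can}. The first step is to express $\calQ_\lambda(t)$ in symplectic terms. Using that $c_t$ is smooth near $x_0$ with $d_{x_0}c_t=\lambda$ for all small $t>0$ (recalled above from \cite[Theorem 4.2]{curvature}), one shows that a curve $s\mapsto\g_v(s)$ with $\g_v(0)=x_0$ and $\dot\g_v(0)=v\in\calD_{x_0}$ lifts to a one-parameter variation through geodesics, hence to a Jacobi field $\J_v$ along $\lambda(\cdot)$ which is vertical at time $t$, and that $d^2_{x_0}\dot c_t(v)$ is computed from $\J_v$. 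Equivalently, $\calQ_\lambda(t)$ is the symmetric operator on $\calD_{x_0}$ representing the solution at time $0$ of the matrix Riccati-type equation attached to the Jacobi (Hamiltonian) flow with the Lagrangian boundary condition ``vertical at time $t$'', i.e.\ it encodes the Jacobi curve $\Lam_\lambda(t):=e^{-t\vec H}_*\ver_{\lambda(t)}\subset T_\lambda(T^*M)$ (a smooth curve in the Lagrange Grassmannian with $\Lam_\lambda(0)=\ver_\lambda$), restricted to the $\calD_{x_0}$-component.

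The second step is to make this explicit using the canonical frame $\{E_{ai},F_{ai}\}_{ai\in\y}$. Since $\ver_{\lambda(t)}=\spn\{E_{ai}(t)\}$, the Jacobi fields vertical at $t$ are those $\J=\sum_{ai}(p_{ai}E_{ai}+x_{ai}F_{ai})$ whose $x$-component vanishes at $t$; solving the linear system \eqref{eq:Jacobicoord} backward from the terminal condition $x(t)=0$ yields $(p(0),x(0))$ as an explicit function of $t$ and of the free data $p(t)$. Because $C_1(\y),C_2(\y)$ are constant and $C_1(\y)$ is nilpotent, this is rational in $t$ with coefficients depending smoothly on $t$ through $R(t)$; writing $\Lam_\lambda(t)$ as a graph over $\ver_\lambda$ produces an explicit formula for $\calQ_\lambda(t)$, whose restriction to $\calD_{x_0}$ is read off from the block of the $\{F_{a1}(0)\}_{a=1,\dots,k}$, which span $\calD_{x_0}=\calF^1_{\g(0)}$. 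The crucial combinatorial fact, for which Lemma~\ref{l:decreasing} is used in the bookkeeping, is that after restriction to this first layer the pole of $\calQ_\lambda(t)$ at $t=0$ has order exactly $2$, independently of the row lengths $n_a$; in particular $t^2\calQ_\lambda(t)$ extends smoothly to $t=0$.

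The three assertions are then obtained by expanding $R(t)=R(0)+O(t)$ in this explicit formula. The part of $\calQ_\lambda(t)$ coming from the ``flat'' model $R\equiv 0$ supplies the principal part $\frac{1}{t^2}\calI_\lambda$, where $\calI_\lambda$ is block-diagonal, each block being an explicit constant positive-definite matrix determined only by the length $n_a$ of the corresponding row (equal to $1$ when $n_a=1$); an elementary inequality on these blocks gives $\calI_\lambda\ge\bbI$, with equality precisely when every $n_a=1$, i.e.\ in the Riemannian case. The same flat computation, via the structural equations \eqref{zelframe} (in particular $\dot F_{an_a}=\sum_{bj}R_{an_a,bj}E_{bj}$ and $\dot F_{ai}=\sum_{bj}R_{ai,bj}E_{bj}-F_{a(i+1)}$), shows that the coefficient of $t^1$ in the Taylor expansion of $t^2\calQ_\lambda(t)$ vanishes, i.e.\ $\frac{d}{dt}|_{t=0}t^2\calQ_\lambda(t)=0$. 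Finally the first contribution of $R$ to $\calQ_\lambda(t)-\frac{1}{t^2}\calI_\lambda$ occurs at order $t^0$; setting $\calR_\lambda:=3\lim_{t\to 0}(\calQ_\lambda(t)-\frac{1}{t^2}\calI_\lambda)$ gives \eqref{eq:R}, while symmetry of $\calR_\lambda$ and of the whole family is inherited from the symmetry of $R(t)$ and the Darboux normalization of the canonical frame.

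The main obstacle is the second step: pinning down the order of the pole of $\calQ_\lambda(t)$ after restriction to $\calD_{x_0}$ and carrying out the flat-model computation that forces the first-order term to vanish. Both depend delicately on the normality condition for $R(t)$ and on the chain structure of the canonical frame, and this is where essentially all the work lies; once the explicit rational form of $\calQ_\lambda(t)$ on the flat model is in hand, the positivity $\calI_\lambda\ge\bbI$ and the identification of $\calR_\lambda$ are comparatively routine.
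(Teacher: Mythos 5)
The paper does not actually prove this statement: it is quoted verbatim from \cite[Theorem A]{curvature} (``The following result is contained in \ldots''), so there is no in-paper argument to compare yours against. Judged on its own, your sketch reproduces the strategy that \cite{curvature} uses in the \emph{equiregular} case, and the structural claims you make there (the Jacobi-curve reformulation of $\calQ_\lambda(t)$, the flat model $R\equiv 0$ producing a block-diagonal $\calI_\lambda$ with blocks determined by the row lengths $n_a$, hence $\calI_\lambda\ge\bbI$ with equality exactly in the Riemannian case, and the first contribution of $R(0)$ entering at order $t^0$ to give $\calR_\lambda$) are all consistent with what is actually proved there.

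The genuine gap is the hypothesis mismatch. The theorem assumes only that $\g$ is \emph{ample}, whereas your entire second step rests on the canonical frame of Theorem~\ref{p:can}, which exists only for ample \emph{and equiregular} geodesics (it presupposes a well-defined Young diagram, i.e.\ a $t$-independent growth vector). For a merely ample geodesic the growth vector can jump, there is no Young diagram and no moving frame satisfying \eqref{zelframe}, so the reduction to the linear system \eqref{eq:Jacobicoord} with constant $C_1(\y),C_2(\y)$ is not available, and the ``bookkeeping via Lemma~\ref{l:decreasing}'' that you use to pin down the order-$2$ pole has no meaning. In \cite{curvature} the general ample case is handled differently: one works directly with the Jacobi curve $\Lam_\lambda(t)=e^{-t\vec H}_*\ver_{\lambda(t)}$, its flag induced by ampleness, and the asymptotics of the associated family of quadratic forms, without any canonical frame; the canonical frame and the explicit flat-model computation enter only afterwards, under the extra equiregularity assumption, to identify $\calR_\lambda$ concretely (formula \eqref{eq:ROmega}). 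As written, your argument proves the theorem only for ample equiregular geodesics; to cover the stated generality you would need to replace step two by a frame-free analysis of the Jacobi curve, or explicitly restrict the claim.
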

\begin{definition}
We call the symmetric operator $\calR_{\lambda}:\calD_{x_0}\to\calD_{x_0}$ in \eqref{eq:R} the \emph{curvature} at $\lambda$. Its trace $\text{tr}\,\calR_{\lambda}$ is the \emph{Ricci curvature} at $\lambda$. %Notice that this trace is well defined since $\distr_{x_{0}}$ is endowed by a inner product.
\end{definition}

When $\g$ is also equiregular, the curvature operator $\calR_{\lambda}$ can be written in terms of the smooth $n$-dimensional symmetric matrix $R(t)$ introduced in the canonical equations \eqref{zelframe}, as we now describe.
 
Let $\g(t)=\pi (e^{t\vec{H}}(\lambda))$ be ample and equiregular, and let $\{E_{ai}(t),F_{ai}(t)\}_{ai\in D}$ be a canonical frame along the curve $\lambda(t)$.
\begin{lemma}[\cite{curvature}, Lemma 8.3]\label{lemma:adapted basis}
The set of vector fields along $\g(t)$
$$X_{ai}(t):=\pi_* F_{ai}(t),\qquad ai\in D$$
is a basis for $T_{\g(t)}M$ adapted to the flag $\{\calF^i_{\g(t)}\}_{i=1}^m$ and $\{X_{a1}(t)\}_{a=1}^k$ is an orthonormal basis for $\calD_{\g(t)}$ along the geodesic.
\end{lemma}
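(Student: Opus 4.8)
The strategy is to read both assertions off the structural equations \eqref{zelframe} and properties (i)--(ii) of Theorem~\ref{p:can}, using only that $\pi_*|_{\lambda(t)}$ is surjective with kernel the vertical space $\V_{\lambda(t)}=\spn\{E_{ai}(t)\}_{ai\in\y}$.

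\emph{Step 1: $\{X_{ai}(t)\}$ is a frame.} Since $\{E_{ai}(t),F_{ai}(t)\}_{ai\in\y}$ is a basis of $T_{\lambda(t)}(T^*M)$ and, by (i), $\ker\pi_*|_{\lambda(t)}=\spn\{E_{ai}(t)\}$, the map $\pi_*$ carries $\spn\{F_{ai}(t)\}$ isomorphically onto $T_{\g(t)}M$. Hence the vector fields $X_{ai}(t)=\pi_*F_{ai}(t)$, $ai\in\y$, form a smooth moving frame on $T_{\g(t)}M$.

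\emph{Step 2: the first column.} By the second line of \eqref{zelframe}, $X_{a1}=-\pi_*\dot E_{a1}$. A computation in a Darboux chart $(p,x)$ shows that for a vertical field $E=\sum_i a_i(t)\partial_{p_i}$ along $\lambda(t)$ one has $\pi_*\dot E=-\sum_{i,j}a_i\,\tfrac{\partial^2 H}{\partial p_i\partial p_j}\,\partial_{x_j}$, i.e.\ $\pi_*\dot E$ is the image of $E$ under the fibrewise Hessian $\partial^2_{pp}H(\lambda(t))\colon T^*_{\g(t)}M\to T_{\g(t)}M$. Because $H$ has the form \eqref{eq:classH}, the image of $\partial^2_{pp}H$ is $\distr_{\g(t)}$ and the quadratic form it induces on $\distr_{\g(t)}$ is exactly the inner product making $X_1,\dots,X_k$ orthonormal; thus $X_{a1}(t)\in\distr_{\g(t)}$. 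The same chart computation identifies $\sigma(E_{a1},F_{b1})$ with the value of that form on $(E_{a1},E_{b1})$, i.e.\ with $\la X_{a1},X_{b1}\ra$, which by (ii) equals $\delta_{ab}$. So $\{X_{a1}(t)\}_{a=1}^k$ is an orthonormal $k$-tuple in the $k$-dimensional space $\distr_{\g(t)}=\calF^1_{\g(t)}$, hence an orthonormal basis of it.

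\emph{Step 3: adaptedness, by induction on $i$.} Assume $\calF^i_{\g(s)}=\spn\{X_{aj}(s):aj\in\y,\ j\le i\}$ for $s$ near $t$. Equiregularity makes the maps \eqref{eq:calL} well defined and yields $\calF^{i+1}_{\g(t)}=\calF^i_{\g(t)}+\{\,\calL_\tanf(Y)|_{\g(t)}:Y\in\Gamma(TM),\ Y|_{\g(s)}\in\calF^i_{\g(s)}\ \forall s\,\}$. Given such $Y$, write $Y|_{\g(s)}=\sum_{j\le i}f_{aj}(s)X_{aj}(s)$ and lift it to $\tilde Y(s)=\sum_{j\le i}f_{aj}(s)F_{aj}(s)$, so $\pi_*\tilde Y=Y|_\g$. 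The relation between the two Lie derivatives established in \cite{curvature} gives $\calL_\tanf(Y)|_{\g(t)}\equiv-\pi_*\dot{\tilde Y}(t)\pmod{\calF^i_{\g(t)}}$. Differentiating $\tilde Y$ and projecting, the terms $\dot f_{aj}F_{aj}$ contribute $\sum\dot f_{aj}X_{aj}\in\calF^i_{\g(t)}$, while the last two lines of \eqref{zelframe} and $\pi_*E_{bj}=0$ give $\pi_*\dot F_{ai}=-X_{a(i+1)}$ when $i<n_a$ and $\pi_*\dot F_{an_a}=0$; since $X_{a(j+1)}\in\calF^i_{\g(t)}$ for $j\le i-1$, only the top boxes survive modulo $\calF^i_{\g(t)}$ and
\[
\calL_\tanf(Y)|_{\g(t)}\equiv\sum_{a:\,n_a\ge i+1}f_{ai}(t)\,X_{a(i+1)}(t)\pmod{\calF^i_{\g(t)}}.
\]
Taking $Y=X_{ai}$ (extended arbitrarily off $\g$) the coefficients $f_{ai}(t)$ are arbitrary, so $\calF^{i+1}_{\g(t)}=\calF^i_{\g(t)}+\spn\{X_{a(i+1)}(t):n_a\ge i+1\}=\spn\{X_{aj}(t):aj\in\y,\ j\le i+1\}$; these $d_1+\dots+d_{i+1}=\dim\calF^{i+1}_{\g(t)}$ vectors belong to the frame of Step 1, hence form a basis of $\calF^{i+1}_{\g(t)}$. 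Ampleness gives $\calF^m_{\g(t)}=T_{\g(t)}M$ and closes the induction, so $\{X_{ai}(t)\}_{ai\in\y}$ is adapted to $\{\calF^i_{\g(t)}\}_{i=1}^m$ with first column an orthonormal basis of $\distr_{\g(t)}$.

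\emph{Main obstacle.} The only non-bookkeeping ingredient is the compatibility identity $\calL_\tanf(\pi_*V)\equiv-\pi_*\dot V\pmod{\calF^i_{\g(t)}}$ for fields $V$ along $\lambda(t)$ valued in the lift of $\calF^i_\g$, together with the fact (this is where equiregularity enters) that both sides are well defined modulo $\calF^i_{\g(t)}$ in spite of the non-canonical choices of the admissible extension $\tanf$, of the extension of $Y$ off $\g$, and of the lift $\tilde Y$. This is precisely what is developed around the flag of a geodesic in \cite{curvature}; granting it, Steps~1--3 are routine.
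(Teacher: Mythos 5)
The paper does not actually prove this lemma; it imports it verbatim from \cite{curvature} (Lemma 8.3 there), and your argument is essentially the proof given in that reference: the $E_{ai}$ span the vertical space so the $F_{ai}$ project to a frame; orthonormality of the first column follows from $F_{a1}=-\dot E_{a1}$, the Darboux relations, and the identification of $\pi_*\dot E$ with the fibrewise Hessian of $H$ (whose image is $\distr$ and which induces the metric); and adaptedness follows from the structural equations via the compatibility between $\calL_{\tanf}$ and the dot-derivative, whose well-posedness modulo the flag is exactly where equiregularity enters. The one slip is a sign: with the paper's conventions the compatibility identity is $\calL_{\tanf}(\pi_*V)\equiv +\pi_*\dot V$ modulo $\calF^i_{\g(t)}$, not $-\pi_*\dot V$, as one checks by comparing $\pi_*\dot F_{ai}=-X_{a(i+1)}$ with Lemma \ref{lemma:Xai}; this is immaterial for the span argument, so your conclusion stands.
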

%\begin{proof}
%The claim follows since $\{E_{ai}(t),F_{ai}(t)\}_{ai\in D}$ is a basis for $T_{\lambda{t}}(T^*M)$, $\pi:T(T*M)\to TM$ is a submersion and $\pi_*E_{ai}(t)=0$ by definition.
%\end{proof}
%For $i,j\in\bbN$ we set
%$$\Omega(i,j)=
%\begin{cases}
%0, & |i-j|\geq 2,\\
%\dfrac{1}{4(i+j)}, & |i-j|=1,\\
%\dfrac{i}{4i^2-1}, & i=j.
%\end{cases}$$
The following proposition is proved in \cite[Section 7.4]{curvature}.
\begin{prop}Let $\g$ be an ample and equiregular geodesic with initial covector $\lambda$. 
The matrix representing $\calR_{\lambda}$ in terms of the orthonormal basis $\{X_{a1}(t)\}_{a=1}^k$ depends only on the elements of $R_{a1,b1}(0)$ corresponding to the first column of the associated Young diagram. More precisely we have
\begin{equation}\label{eq:ROmega}
(\calR_{\lambda})_{ab}=3\Omega(n_a,n_b)R_{a1,b1}(0), \qquad a,b=1,\ldots,k,
\end{equation}
where for $i,j\in\bbN$ we set
$$\Omega(i,j)=
\begin{cases}
0, & |i-j|\geq 2,\\
\dfrac{1}{4(i+j)}, & |i-j|=1,\\
\dfrac{i}{4i^2-1}, & i=j.
\end{cases}$$
\end{prop}
%%%%%%%%%%%%%%%%%%%%%%%%%%%%%%%%%%%%%%%%%%%%%%%%%%%%%%%%%%%%%%%%%%%%%%%%%%%%%%%%%%%%%%%%%%%%%%%%%%%%%%%%%%%%%%%%%%%%%%%%%%%%%%%%%%%%%%%%%%%%%%%

\section{Volume geodesic derivative} \label{s:rho}
In this section we introduce the volume geodesic derivative $\rho$ describing the interaction between the dynamics and the volume $\mu$ on the manifold, and we study its basic properties. 

%Recall that given a smooth volume form $\mu$ on $M$ its value $\mu_{x}$ at a point is a nonzero element of the space $\Lambda^{n}(T_{x}M)$. We can associate with it the unique element $\mu_{x}^{*}$ in $\Lambda^{n}(T_{x}M)^{*}=\Lambda^{n}(T^{*}_{x}M)$ satisfying $\mu_{x}^{*}(\mu_{x})=1$. This defines a volume form on then fiber $T^{*}_{x}M$.

Recall that, given a smooth volume form $\mu$ on $M$, its value $\mu_{x}$ at a point is a nonzero element of the space $\Lambda^{n}(T_{x}M)$. We can associate with it the unique element $\mu_{x}^{*}$ in $\Lambda^{n}(T_{x}M)^{*}=\Lambda^{n}(T^{*}_{x}M)$ satisfying $\mu_{x}^{*}(\mu_{x})=1$. This defines a volume form on the fiber $T^{*}_{x}M$. By the canonical identification $T^{*}_{x}M\simeq T_{\lam}(T^{*}_{x}M)$ of a vector space with its tangent space to a point, this induces a volume form $\mu^{*}_{\lam}$ on the vertical space $\V_{\lambda}:=T_{\lam}(T^{*}_{x}M)$ for each $\lam\in T_{x}^{*}M$.

Let $\gamma(t)=\pi(\lam(t))$ be an ample and equiregular geodesic defined on $[0,T]$, with $\lam(t)=e^{t\vec H}(\lam) $ and  $\lambda\in T^{*}_{x}M$. Denote by $\mathcal{A}$ the set of $\lambda\in T^{*}M$ such that the corresponding trajectory is ample and equiregular. For a fixed $x\in M$, we set $\mathcal{A}_{x}:=\mathcal{A}\cap T^{*}_{x}M$.

Notice that, if $\lam\in \mathcal{A}_{x}$, then the exponential map $\pi\circ e^{t\vec H}:T^{*}_{x}M\to M$ is a local diffeomorphism at $\lambda$, for small $t\geq 0$. Then it makes sense to consider the pull-back measure $(\pi\circ e^{t\vec H})^{*}\mu$ and compare its restriction to the  vertical space $\V_{\lambda}$ with $\mu^{*}_{\lambda}$.

\begin{definition} For every $\lam\in \mathcal{A}_{x}$ we define the volume geodesic derivative by the identity 
\bqn
\rho(\lam)\mu^{*}_{\lam}=\frac{d}{dt}\bigg|_{t=0} \log \left(t^{-\mathcal{N}(\lambda)}\left.(\pi\circ e^{t\vec H})^{*}\mu\right|_{\V_{\lambda}}\right),
\eqn
where $\calN(\lambda)$ is defined in \eqref{eq:Nlambda}.
\end{definition}

Let $\{\theta_{ai}(t)\}_{ai\in D}\in T_{\g(t)}^*M$ be the coframe dual to $X_{ai}(t)=\pi_{*}F_{ai}(t)$ and define a volume form $\omega$ along $\g$ as
\begin{equation}\label{eq:volumeomega}
\omega_{\g(t)}:=\theta_{11}(t)\wedge \theta_{12}(t)\wedge\ldots\wedge\theta_{k n_k}(t).
\end{equation}
Given a fixed smooth volume $\mu$ on $M$, let $g_\lambda:[0,T]\to \bbR$ be the smooth function such that%depending on the volume $\mu$ and the curve $\g$ such that
\begin{equation}
\mu_{\gamma(t)}=e^{g_\lambda(t)}\omega_{\gamma(t)}.
\label{eq:g}
\end{equation}
The first main result of this section is the relation between the invariant $\rho$ and the function $g_\lambda(t)$ just introduced.
\begin{prop} For every $\lambda\in \mathcal{A}_{x}$ one has $\rho(\lam)=\dot{g}_{\lam}(0)$.
\end{prop}
\noindent The proof of this Proposition is a corollary of the proof of the main theorem, that is proved in Section \ref{s:proof}. We exploit the previous identity to prove some useful properties of the invariant $\rho$. We start by the following lemma.
\begin{lemma} \label{lemma:g}
Let $\g(t)=\pi (e^{t\vec{H}}(\lambda))$ be an ample and equiregular geodesic. Then we have
$$\dot g_{\lam}(t)=\dot g_{\lam(t)}(0), \qquad \forall t\in[0,T].$$
\end{lemma}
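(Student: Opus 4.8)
The plan is to exploit the autonomy of the Hamiltonian flow $\Phi_t:=e^{t\vec H}$, which makes the entire construction entering the definition of $g_\lam$ equivariant under $\Phi_t$. First I would record the elementary observation that the geodesic with initial covector $\lam(t)=\Phi_t(\lam)$ is the time-$t$ shift of $\g$: for $s$ near $0$,
\[
\pi(\Phi_s(\lam(t)))=\pi(\Phi_{t+s}(\lam))=\g(t+s).
\]
Since $\g$ is ample and equiregular on $[0,T]$, the germ of $\g$ at $\g(t)$ has the same flag dimensions, hence the same Young diagram $\y$, so $\lam(t)\in\mathcal{A}_{\g(t)}$ and the function $g_{\lam(t)}$ is well defined on a neighbourhood of $0$.

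The core step is to show that a canonical frame along $\lam(\cdot)$ shifts to a canonical frame along $\lam(t+\cdot)$. Concretely, if $\{E_{ai}(\cdot),F_{ai}(\cdot)\}_{ai\in\y}$ satisfies (i)--(iii) of Theorem \ref{p:can} along $\lam(\cdot)$ with normal curvature matrix $R(\cdot)$, then $s\mapsto\{E_{ai}(t+s),F_{ai}(t+s)\}_{ai\in\y}$ satisfies the same conditions along $s\mapsto\Phi_s(\lam(t))=\lam(t+s)$, with curvature matrix $R(t+\cdot)$. Indeed, property (i), $\pi_*E_{ai}=0$, and property (ii), the Darboux relations for $\sigma$, are pointwise in the parameter; for property (iii), if $W(s):=V(t+s)$ for a field $V$ along $\lam(\cdot)$, the intrinsic derivative satisfies $\dot W(s)=\dot V(t+s)$, so the structural equations \eqref{zelframe} hold verbatim for the shifted frame; and normality of $R$ is a pointwise algebraic condition on the matrix (see \cite[Appendix F]{curvature}), hence preserved under time translation. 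Therefore the adapted basis of Lemma \ref{lemma:adapted basis} for the shifted geodesic is $X^{\lam(t)}_{ai}(s)=\pi_*F_{ai}(t+s)=X_{ai}(t+s)$, and the form \eqref{eq:volumeomega} built from it satisfies $\omega^{\lam(t)}_{\g(t+s)}=\omega_{\g(t+s)}$. (Changing the canonical frame amounts to acting superbox-wise by the orthogonal matrices $O^\alpha$ of Theorem \ref{p:can}, which multiplies $\omega$ by $\prod_\alpha\det O^\alpha=\pm1$ along the connected curve and does not affect the function determined by \eqref{eq:g}.)

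Combining these, from \eqref{eq:g} applied to the shifted geodesic and to $\g$ at the common point $\g(t+s)$ we obtain
\[
e^{g_{\lam(t)}(s)}\,\omega_{\g(t+s)}=\mu_{\g(t+s)}=e^{g_{\lam}(t+s)}\,\omega_{\g(t+s)},
\]
so $g_{\lam(t)}(s)=g_{\lam}(t+s)$ for all small $s$; differentiating at $s=0$ yields $\dot g_{\lam(t)}(0)=\dot g_{\lam}(t)$, as claimed.

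The only point requiring genuine care is the shift-invariance of the canonical frame, and within it the fact that normality of $R(t)$ is preserved under time translation; since that condition is purely pointwise-algebraic, checking it against the definition in \cite{lizel} is routine, and everything else is formal bookkeeping.
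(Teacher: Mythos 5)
Your proposal is correct and follows essentially the same route as the paper's own proof: both reduce the claim to the identity $g_{\lam(t)}(s)=g_{\lam}(t+s)$ by observing that the time-shifted canonical frame is again a canonical frame along $e^{s\vec H}(\lam(t))$, so that $\omega_{\g(t+s)}$ is the same form whether built from $\g$ or from its shift. Your version merely spells out the verification of properties (i)--(iii) and the normality/non-uniqueness issues that the paper dispatches in one sentence.
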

\begin{proof}
Let $\lambda(t)=e^{t\vec{H}}(\lambda)\in T^*M$ be the lifted extremal and denote by $\gamma_t(s):=\g(t+s)$. Then $\g_t(s)=\pi (e^{s\vec{H}}(\lam(t)))$ and we have the sequence of identities
$$e^{g_\lambda(t+s)}\omega_{\g(t+s)}=\mu_{\g(t+s)}=\mu_{\g_t(s)}=e^{g_{\lambda(t)}(s)}\omega_{\g_t(s)}$$ 
Moreover $\omega_{\g(t+s)}=\omega_{\g_t(s)}$ since, if $(E_{\lambda(t+s)},F_{\lambda(t+s)})$ is a canonical frame along $\lam(t+s)$, it is a canonical frame also for $e^{s\vec{H}}\lam(t)$. It follows that $g_\lambda(t+s)=g_{\lambda(t)}(s)$ for every $s$ and differentiating with respcet to $s$ at $s=0$ one gets the result.
\end{proof}
%
%\begin{definition}
%Let $\mathcal{A}\subset T^*M$ be the set of covectors such that the corresponding geodesic is ample and equiregular. We define the function $\rho:\mathcal{A}\to \bbR$ as
%$$\rho(\lambda):=\dot{g}_{\lambda}(0).$$
%\end{definition}

Lemma \ref{lemma:g} allows us to write $g$ as a function of $\rho$, as follows
\begin{equation}\label{eq:gintegral}
g_{\lam}(t)=g_{\lam}(0)+\int_{0}^{t}\dot g_{\lam}(s) ds=g_{\lam}(0)+\int_{0}^{t} \rho(\lam(s)) ds.
\end{equation}

\begin{prop} Let $\tanf$ be any admissible extension of $\dot{\g}$ and $\omega$ the $n$-form defined in \eqref{eq:volumeomega}. Then for every $\lambda\in T^*_x M$
\begin{equation}\label{eq:rhodiv}
\rho(\lambda)= (\mathrm{div}_{\mu} \tanf-\mathrm{div}_{\omega} \tanf)|_{x}.
\end{equation}
\end{prop}
\begin{proof} It is a direct consequence of  the classical identity 
$$\diver_{f\omega}X-\diver_\omega X=X(\log f)$$ which holds for every smooth volume form $\omega$, smooth function $f$ and smooth vector field $X$.
\end{proof}
\begin{remark}[On the volume form $\omega$, I]\label{r:rem}
In the Riemannian case $\{X_{ai}(t)\}_{ai\in D}$ is an orthonormal frame for the Riemannian metric by Lemma \ref{lemma:adapted basis} and the form $\omega$ coincides with the restriction of the canonical Riemannian volume $\vol_g$ on the curve $\gamma$. Hence
\begin{equation}\label{eq:rho000}
\rho(v)=(\mathrm{div}_{\mu}\tanf) |_x-(\mathrm{div}_{\vol_g}\tanf) |_x.
\end{equation}
%where $\mathrm{div}_{\mu}X$ denotes the divergence of a vector field $X$ with respect to the volume $\mu$ and $\tanf$ tdenotes any smooth vector field such that $\tanf|_{\gamma(t)}=\dot \gamma(t)$.
In the general case $\rho$  can still be represented as the difference of two divergences
%\begin{equation}\label{eq:rho001}
%\rho(\lam)=(\mathrm{div}_{\mu}\tanf- \mathrm{div}_{\omega}\tanf) |_x,
%\end{equation}
but the volume form $\omega$ \emph{depends on the curve} $\gamma$ and is not the restriction to the curve of a global volume form.
\end{remark}

Next we recall a refinement of Lemma \ref{lemma:adapted basis}.
\begin{lemma}[\cite{curvature}, Lemma 8.5]\label{lemma:Xai}
For $t\in[0,T]$, the projections $X_{ai}(t)=\pi_{*}F_{ai}(t)$ satisfy
$$X_{ai}(t)=(-1)^{i-1}\calL_{\tanf}^{i-1}(X_{a1}(t))\;\mod \mc{F}^{i-1}_{\g(t)},\qquad a=1,\ldots,k,\ i=1,\ldots,n_a.$$
\end{lemma}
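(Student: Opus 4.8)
The plan is to argue by induction on $i$, the base case $i=1$ being the tautology $X_{a1}(t)=\calL_\tanf^0(X_{a1}(t))$. The whole scheme rests on two ingredients: the structural equations \eqref{zelframe} of the canonical frame together with the property $\pi_*E_{bj}=0$ from Theorem~\ref{p:can}(i), and the fact (recalled before Definition~\ref{def:symbol}) that under equiregularity $\calL_\tanf$ descends to well-defined linear maps $\calF^i_{\g(t)}/\calF^{i-1}_{\g(t)}\to\calF^{i+1}_{\g(t)}/\calF^i_{\g(t)}$, so that the class of $\calL_\tanf(X)$ modulo $\calF^i_{\g(t)}$ depends only on the class of $X\in\calF^i_{\g(t)}$ modulo $\calF^{i-1}_{\g(t)}$. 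First I would read off from the third line of \eqref{zelframe}, valid for $i\le n_a-1$, namely $\dot F_{ai}=\sum_{bj}R_{ai,bj}(t)E_{bj}-F_{a(i+1)}$, and apply $\pi_*$ using $\pi_*E_{bj}=0$, to obtain the exact identity
$$X_{a(i+1)}(t)=\pi_*F_{a(i+1)}(t)=-\pi_*\dot F_{ai}(t).$$
Thus the statement reduces to comparing $\pi_*\dot F_{ai}(t)$ with $\calL_\tanf(X_{ai}(t))$.

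The core of the argument — and the step I expect to be the main obstacle — is the claim
$$\pi_*\dot F_{ai}(t)\equiv \calL_\tanf\big(X_{ai}(t)\big)\pmod{\distr_{\g(t)}}.$$
Here one must be careful because $\vec H$ does not project to a vector field on $M$; only its value along $\lambda(t)$ does, with $\pi_*\vec H|_{\lambda(t)}=\dot\g(t)$. To prove the claim I would fix the admissible extension $\tanf=X_0+\sum_{i}\bar u_iX_i$ of $\dot\g$, where $\bar u_i$ are smooth functions on $M$ extending the geodesic controls $u_i(t)=\la\lambda(t),X_i(\g(t))\ra$, so that $\g$ is an integral curve of $\tanf$ and $\calL_\tanf$ is well defined along it; similarly $\pi_*\dot V(t)$ depends only on $V$ along $\lambda(\cdot)$, not on the extension in $T^*M$. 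Then, in canonical coordinates $(x,p)$, writing $\dot F_{ai}=[\vec H,\tilde F_{ai}]|_{\lambda(t)}$, a direct computation shows that $\pi_*\dot F_{ai}(t)-\calL_\tanf(X_{ai}(t))$ is a linear combination of $X_1,\dots,X_k$ at $\g(t)$. This is exactly where the special form \eqref{eq:classH} of $H$ is used: the fiberwise Hessian $\partial^2H/\partial p\,\partial p$ equals $\sum_{i}X_i\otimes X_i$, so the vertical component of $\tilde F_{ai}$ produces only terms in $\distr_{\g(t)}$, while the terms coming from the $x$-dependence of $\partial H/\partial p$ collapse to elements of $\distr_{\g(t)}$ once evaluated on the curve, because $\bar u_i(\g(t))=\la p(t),X_i(\g(t))\ra$ makes the non-$\distr$ contributions cancel. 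I expect the only delicate point to be this coordinate bookkeeping; the rest is formal.

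Granting the claim, the induction closes immediately. Since $\distr_{\g(t)}=\calF^1_{\g(t)}\subseteq\calF^i_{\g(t)}$ for $i\ge1$, we get $X_{a(i+1)}(t)\equiv-\calL_\tanf(X_{ai}(t))\pmod{\calF^i_{\g(t)}}$. By Lemma~\ref{lemma:adapted basis} the field $X_{ai}$ is a section of $\calF^i$ along $\g$, hence extends to a section of $\calF^i$ near $\g$ by equiregularity, and by the definition of the flag $\calL^{i-1}_\tanf(X_{a1})$ is also a section of $\calF^i$; therefore the inductive hypothesis $X_{ai}(t)\equiv(-1)^{i-1}\calL^{i-1}_\tanf(X_{a1}(t))\pmod{\calF^{i-1}_{\g(t)}}$ may be pushed forward by the well-defined induced map $\calL_\tanf\colon\calF^i_{\g(t)}/\calF^{i-1}_{\g(t)}\to\calF^{i+1}_{\g(t)}/\calF^i_{\g(t)}$, giving $\calL_\tanf(X_{ai}(t))\equiv(-1)^{i-1}\calL^i_\tanf(X_{a1}(t))\pmod{\calF^i_{\g(t)}}$. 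Substituting this into the previous congruence yields $X_{a(i+1)}(t)\equiv(-1)^i\calL^i_\tanf(X_{a1}(t))\pmod{\calF^i_{\g(t)}}$, which is precisely the assertion for the index $i+1$, completing the induction over the admissible range $1\le i\le n_a$.
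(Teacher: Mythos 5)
The paper itself does not prove this lemma---it is imported verbatim from \cite{curvature}, Lemma 8.5---but your argument reproduces the proof given there: induction via the structural equations \eqref{zelframe} together with $\pi_*E_{bj}=0$, the key step being that $\pi_*\dot V(t)\equiv\calL_\tanf(\pi_*V(t))$ modulo $\distr_{\g(t)}$, which you justify correctly by the fiberwise-quadratic form of $H$ and the fact that $\pi_*\vec H$ differs from $\tanf$ only by a $\distr$-valued correction vanishing along $\lambda(\cdot)$. The signs are consistent with the paper's own use of the lemma (for $i=2$ your conclusion $X_{a2}\equiv-\calL_\tanf X_{a1}\bmod\distr$ is exactly the formula invoked in Section \ref{sec:contact}), so the proof is correct and essentially the standard one.
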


\begin{prop}[$\rho$ depends only on $\mu$ and the symbol along $\g$] \label{p:pppp}
Let $\g,\g'$ be two geodesics associated with initial covectors $\lam\in \mathcal{A}_{\gamma(0)}$ and $\lam'\in \mathcal{A}_{\gamma'(0)}$ respectively.
Assume that there exists a diffeomorphism $\phi$ on $M$ such that for $t\geq 0$ small enough%such that
\bi
\iii[(i)] $\phi(\gamma(t))=\gamma'(t)$, 
\iii[(ii)] $\phi_{*}|_{\gamma(t)}$ induces an isomorphism of symbols between $S_{\g(t)}$ and $S_{\g'(t)}$
%%\ei 
%and $\mu$ is invariant under $\phi$, i.e. 
%\bi
\iii[(iii)] $\phi^{*}\mu_{\gamma'(t)}=\mu_{\gamma(t)}$, 
\ei
Then $\rho(\lambda)=\rho (\lambda')$.
\end{prop}
\begin{proof} Let $\{E_{ai}(t),F_{ai}(t)\}_{ai\in D}$ and $\{E'_{ai}(t),F'_{ai}(t)\}_{ai\in D}$  be canonical frames with respect to $\lambda$ and $\lambda'$ respectively, and $X_{ai}(t)=\pi_* (F_{ai}(t))$, $X_{ai}'(t)=\pi_*(F'_{ai}(t))$ be the associated basis of $T_{\g(t)}M$ and $T_{\g'(t)}M$. Since $\omega$ evaluated on the projection of the canonical frame gives 1 by construction, we have
\begin{gather} e^{g_\lambda(t)}=|\mu_{\g(t)}(X_{11}(t),\ldots,X_{kn_k}(t))|, \\
 e^{g_{\lambda'}(t)}=|\mu_{\g'(t)}(X'_{11}(t),\ldots,X'_{kn_k}(t))|.
 \end{gather}
Recall that $\{X_{a1}\}_{a=1}^k$ (resp.\ $\{X'_{a1}\}_{a=1}^k$) is an orthonormal basis for $\calD_{\g(t)}$ (resp.\ $\calD'_{\g(t)}$). Since the linear map $\phi_*|_{\gamma(t)}:\calD_{\g(t)}\to \calD'_{\g(t)}$ is an isometry for small $t\geq0$, there exists a family of orthogonal $k\times k$ matrices $O(t)$ such that
$$X'_{a1}(t)=\sum_{b=1}^k O_{ab}(t)\phi_*(X_{b1}(t)), \qquad \mbox{ for } a=1,\ldots,k.$$
Moreover using Lemma \ref{lemma:Xai} we have for $i>1$
\begin{equation}
\begin{split}
X'_{ai}(t)=&(-1)^{i-1}\calL^{(i-1)}_{\tanf'}(X'_{a1}(t))\mod\mc{F}^{i-1}_{\g'(t)}\\
=&(-1)^{i-1}\calL^{(i-1)}_{\tanf'}\left(\sum_{b=1}^k O(t)_{ab}\phi_*(X_{b1}(t))\right)\mod\mc{F}^{i-1}_{\g'(t)}\\
=&(-1)^{i-1}\sum_{b=1}^k O(t)_{ab}\calL^{(i-1)}_{\tanf'}\left(\phi_*(X_{b1}(t))\right)\mod\,\mc{F}^{i-1}_{\g'(t)},
\end{split}
\end{equation}
where the last identity follows by the chain rule. Indeed, when one differentiates the matrix $O(t)$, one obtains elements of $\mc{F}^{i-1}_{\g'(t)}$. Then
\begin{equation}
\begin{split}
X'_{ai}(t)=&(-1)^{i-1}\sum_{b=1}^k O(t)_{ab}\phi_*\calL^{(i-1)}_{\tanf}(X_{b1}(t))\,\mod\mc{F}^{i-1}_{\g'(t)}\\
=&\sum_{b=1}^{d_i} O(t)_{ab}\phi_*X_{bi}(t)\,\mod\mc{F}^{i-1}_{\g'(t)},
\end{split}
\end{equation}
where the sum is restricted to those indices $b$ such that $bi\in \mathrm{D}$. This proves that there exists an orthogonal transformation that sends $\phi_*X_{ai}$ in $X'_{ai}$. Therefore
\begin{equation}
\begin{split}
e^{g_{\lambda'}(t)}=&\left| \mu_{\g'(t)}\left(X'_{1,1}(t),\ldots, X'_{kn_k}\right)\right|=\left| \mu_{\g'(t)}\left(\phi_*X_{1,1}(t),\ldots, \phi_*X_{kn_k}\right)\right|\\
=&\left| (\phi^*\mu)_{\g(t)}\left(X_{1,1}(t),\ldots, X_{kn_k}\right)\right|=e^{g_\lambda(t)},
\end{split}
\end{equation}
and the proof is complete.
\end{proof}
Actually, from the previous proof it follows that the invariant $\rho$ depends only on the 1-jet of the one-parameter family of symbols (and the volume form $\mu$) along the geodesics.

\begin{remark}[On the volume form $\omega$, II]
The volume form $\omega$ depends only on the symbol of the structure along the geodesic, that represents the microlocal nilpotent approximation of the structure at $x$ along $\g(t)$. Symbols at different points along any geodesic in the Riemannian manifold are isomorphic, while in the general case this symbol could depend on the point on the curve. This is analogous to what happens for the nilpotent approximation for a distribution (see for instance the discussion contained in \cite{ABBH}). 
\end{remark}
%Eq.\ \eqref{eq:gintegral} and the last proposition say that indeed the whole function $g_\lam(t)$ depends only on the symbol along the curve (and $\mu$).

\begin{lemma} 
Let $\g(t)=\pi( \lambda(t))$ be an ample and equiregular geodesic. Assume that $\tanf$ is an admissible extension of its tangent vector such that $e^{t\tanf}$ is an isometry of the distribution along $\g(t)$. Then $\mathrm{div}_{\omega} \tanf\big|_{\g(t)}=0$ and $\rho(\lambda(t))=\mathrm{div}_{\mu}\tanf$.
\end{lemma}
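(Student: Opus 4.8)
The plan is to compute the divergence of $\tanf$ with respect to the curve-adapted volume form $\omega$ and show it vanishes under the hypothesis, then invoke the previously established formula $\rho(\lambda) = (\diver_\mu \tanf - \diver_\omega \tanf)|_x$ from \eqref{eq:rhodiv} (applied along the curve, using Lemma~\ref{lemma:g} to identify $\rho(\lambda(t))$ with the pointwise difference of divergences at $\g(t)$). So the whole statement reduces to the claim $\diver_\omega \tanf|_{\g(t)} = 0$.

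To prove that claim, recall $\omega_{\g(t)} = \theta_{11}(t)\wedge\cdots\wedge\theta_{kn_k}(t)$ is dual to the frame $X_{ai}(t) = \pi_* F_{ai}(t)$ constructed from the canonical frame. The divergence $\diver_\omega \tanf$ is characterized by $\mathcal{L}_\tanf \omega = (\diver_\omega \tanf)\,\omega$, equivalently by the identity
\begin{equation}
\omega\bigl([\tanf, X_{11}], X_{12}, \ldots, X_{kn_k}\bigr) + \cdots + \omega\bigl(X_{11}, \ldots, [\tanf, X_{kn_k}]\bigr) = -(\diver_\omega \tanf)\,\omega(X_{11},\ldots,X_{kn_k}) = -\diver_\omega \tanf,
\end{equation}
for any local frame; but here I want to use the specific frame $\{X_{ai}\}$ along $\g$. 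The key structural input is Lemma~\ref{lemma:Xai}: $[\tanf, X_{ai}] = -X_{a(i+1)} \mod \mc{F}^i_{\g(t)}$ (reindexing the Lie derivative formula), so modulo lower-flag terms the bracket shifts each basis vector to the next one in its row. When $i < n_a$ this means $\mathcal{L}_\tanf$ sends $X_{ai}$ into $-X_{a(i+1)} + (\text{span of }X_{bj}, j\le i)$, which are all distinct basis vectors except possibly reintroducing $X_{ai}$ itself only through the "lower flag" correction. The diagonal contribution — the coefficient of $X_{ai}$ in $[\tanf,X_{ai}]$ — is therefore what survives in the trace computing $\diver_\omega\tanf$, and for rows of length $\ge 2$ the only possible diagonal term comes from the last box $i = n_a$, where $[\tanf, X_{an_a}] \in \mc{F}^{n_a-1}_{\g(t)}$ and could contain an $X_{an_a}$ component. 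Here is where the isometry hypothesis enters: $e^{t\tanf}$ being an isometry of $\distr$ along $\g$ forces $\mathcal{L}_\tanf$ to act skew-symmetrically on $\distr_{\g(t)} = \spn\{X_{a1}\}$, i.e. the $X_{a1}$-components of $[\tanf, X_{b1}]$ form a skew-symmetric matrix and in particular have zero diagonal; this is the $i=1$ part of the trace. The hard part is the contributions from boxes $i>1$, and I expect these to be controlled by a propagation argument: differentiating the isometry condition, or using that $X_{ai}$ for $i>1$ lies in a fixed-dimension subspace $\mc{F}^i$ whose growth is governed by the equiregular structure, to conclude that the full trace of $\mathcal{L}_\tanf$ on $T_{\g(t)}M$ with respect to the adapted basis vanishes.

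The main obstacle is precisely this last point: Lemma~\ref{lemma:Xai} only controls $[\tanf, X_{ai}]$ modulo $\mc{F}^{i-1}_{\g(t)}$, so the diagonal entries of the transition matrix of $\mathcal{L}_\tanf$ (the $X_{ai}$-coefficient of $[\tanf,X_{ai}]$) lie in the uncontrolled part and must be pinned down separately. I would handle this by exploiting the canonical frame's structural equations \eqref{zelframe} directly: since $\dot F_{ai} = \sum_{bj} R_{ai,bj} E_{bj} - F_{a(i+1)}$ (with $F_{a(n_a+1)}:=0$) and the $E_{bj}$ are vertical ($\pi_* E_{bj}=0$), projecting gives $\pi_*\dot F_{ai} = -X_{a(i+1)}$ exactly, i.e. the Lie derivative $\dot{}$ along $\vec H$ produces clean shifts with no diagonal terms at all on the nose — but one must translate between the $\vec H$-Lie-derivative $\dot{(\cdot)}$ on $T^*M$ and the $\tanf$-Lie-derivative on $M$ of the projected fields, which differ by exactly the vertical corrections. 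The isometry hypothesis should make this translation exact: it ensures the orthonormal frame $\{X_{a1}\}$ is carried to an orthonormal frame, which combined with the uniqueness-up-to-$O^\alpha(t)$ clause of Theorem~\ref{p:can} rigidifies the correspondence enough to conclude $\diver_\omega\tanf = \tr(\mathcal{L}_\tanf \text{ on adapted basis}) = 0$. Once $\diver_\omega\tanf|_{\g(t)}=0$ is established, $\rho(\lambda(t)) = \diver_\mu\tanf|_{\g(t)} - \diver_\omega\tanf|_{\g(t)} = \diver_\mu\tanf|_{\g(t)}$ follows immediately from \eqref{eq:rhodiv} and Lemma~\ref{lemma:g}.
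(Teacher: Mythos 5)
Your reduction is correct: by \eqref{eq:rhodiv} the whole lemma comes down to showing $\mathrm{div}_{\omega}\tanf\big|_{\g(t)}=0$. But you never actually prove that claim. Your plan is to compute $\mathrm{div}_{\omega}\tanf$ as the trace of $\calL_{\tanf}$ in the adapted basis $\{X_{ai}\}$ and to kill the diagonal entries box by box; the first layer is handled by the isometry of $\distr$, but for the higher boxes you only say that the contributions \emph{should} be controlled by a propagation argument or by the structural equations, without carrying this out. That is exactly where the content of the lemma lies: Lemma~\ref{lemma:Xai} controls $[\tanf,X_{ai}]$ only modulo $\mc{F}^{i}_{\g(t)}$, and since $X_{ai}$ itself lies in $\mc{F}^{i}_{\g(t)}$, \emph{every} box (not only the last one of each row, as you assert) can contribute an uncontrolled diagonal term. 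Your suggested fix via $\pi_*\dot F_{ai}=-X_{a(i+1)}$ does not close the gap either, because $\dot{(\cdot)}$ is the Lie derivative along $\vec H$ of fields on $T^*M$, not the Lie derivative along $\tanf$ of their projections, and quantifying the discrepancy between the two is precisely the missing step.

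The paper avoids the box-by-box bookkeeping entirely. It writes $(\mathrm{div}_{\omega}\tanf)\big|_{\g(t)}=\frac{d}{d\epsilon}\big|_{\epsilon=0}\,\omega_{\g(t+\epsilon)}\bigl(e^{\epsilon\tanf}_*X_{11},\ldots,e^{\epsilon\tanf}_*X_{kn_k}\bigr)$ and observes that the quantity being differentiated is constant: since the flow of $\tanf$ is an isometry of the graded structure defining the symbol, the computation of Proposition~\ref{p:pppp} (applied with $\phi=e^{\epsilon\tanf}$, which maps $\g(\cdot)$ to $\g(\cdot+\epsilon)$ and intertwines the maps $\calL_{\tanf}^{i}$ because $e^{\epsilon\tanf}_*\tanf=\tanf$) shows that $e^{\epsilon\tanf}_*X_{ai}$ agrees with $X_{ai}(t+\epsilon)$ up to a block-orthogonal transformation and terms in $\mc{F}^{i-1}_{\g(t+\epsilon)}$, neither of which changes the value of $\omega$ on the parallelotope, which therefore stays equal to $\pm 1$. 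If you want to salvage your trace computation, the cleanest repair is to invoke this global argument — the isometry of the \emph{whole} graded structure, not only of the first layer — rather than trying to pin down each diagonal coefficient separately.
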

\begin{proof}
If we show that $\mathrm{div}_{\omega} \tanf_{\g(t)}=0$, then from   \eqref{eq:rhodiv} it immediately follows that $\rho(\lambda(t))=\mathrm{div}_{\mu}\tanf$ and $\rho$ depends only on the variation of the volume $\mu$ along the curve.

Let $\{X_{ai}(t)\}_{ai\in D}$ be the basis of $T_{\g(t)}M$ induced by the canonical frame along the curve $\lambda(t)$. The divergence is computed as
\begin{equation}
\begin{split}
(\mathrm{div}_{\omega} \tanf)\big|_{\g(t)}\omega_{\g(t)}(X_{11}(t),\ldots,X_{kn_k}(t))=&\calL_{\tanf}\omega(X_{11},\ldots,X_{kn_k})\big|_{\g(t)}\\
=&\left.\frac{d}{d\epsilon}\right|_{\epsilon=0}\omega_{\g(t+\epsilon)}(e^{\epsilon \tanf}_*X_{11},\ldots,e^{\epsilon \tanf}_*X_{kn_k}).
\end{split}
\end{equation}
Since the flow of $\tanf$ is an isometry of the graded structure that defines the symbol, the last quantity is equal to 0,
%$$\left.\frac{d}{d\epsilon}\right|_{\epsilon=0}\omega_{\g(\epsilon)}(e^{\epsilon \tanf}_*X_{11},\ldots,e^{\epsilon \tanf}_*X_{kn_k})=0,$$
which proves that $\mathrm{div}_{\omega}\tanf=0$ along the curve. 
\end{proof}

\begin{lemma}
The function $\rho:\mathcal{A}\to \R$ is a rational function.
\end{lemma}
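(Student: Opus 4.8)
The plan is to show that $\rho$ is rational by exhibiting it as a composition of rational (and differential-algebraic) operations applied to data that depend rationally on the initial covector $\lam$. The starting point is the representation $\rho(\lam) = \dot g_\lam(0)$, where $e^{g_\lam(t)}\omega_{\g(t)} = \mu_{\g(t)}$ and $\omega$ is the volume form built from the projected canonical frame $X_{ai}(t) = \pi_* F_{ai}(t)$. Equivalently, by \eqref{eq:rhodiv}, $\rho(\lam) = (\mathrm{div}_\mu\tanf - \mathrm{div}_\omega\tanf)|_x$. I would work in a fixed coordinate chart and a fixed smooth local frame on $M$, so that $\mu$, $\tanf$, the vector fields $X_0,\dots,X_k$, and the Hamiltonian $H$ in \eqref{eq:classH} all have smooth (indeed polynomial-in-$p$, for the fiber dependence) coefficients. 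The term $\mathrm{div}_\mu\tanf|_x$ is then a fixed smooth function of $x$ composed with nothing depending on $\lam$ at all (recall $\tanf$ is an admissible extension of $\dot\g$, and we may choose it of the form $X_0 + \sum u_i X_i$ where the $u_i = \la \lam, X_i(x)\ra$ are \emph{linear} in $\lam$); so the whole issue is the rationality in $\lam$ of $\mathrm{div}_\omega\tanf|_x$, equivalently of $\dot g_\lam(0)$.

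Next I would recall from Theorem~\ref{p:can} and the Jacobi equation \eqref{eq:Jacobicoord} how the canonical frame is produced. The Young diagram $\y$ is locally constant on $\mathcal{A}$ (equiregularity), hence so are the matrices $C_1(\y)$, $C_2(\y)$ and the combinatorial data $\calN(\lam)$, $\Omega(n_a,n_b)$. The curvature matrix $R(t)$ and the canonical frame $\{E_{ai}(t),F_{ai}(t)\}$ are obtained from the flow of $\vec H$ and a normalization procedure; the key point, established in \cite{lizel,curvature}, is that they depend smoothly on $\lam$ and that the \emph{normalization} conditions pinning down the frame (up to the residual orthogonal freedom, which does not affect $\omega$) are algebraic. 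More concretely, one can take a fixed reference Lagrangian-type basis of $T_\lam(T^*M)$ depending polynomially on $\lam$, flow it by $e^{t\vec H}$ (whose derivatives at $t=0$ are iterated brackets, polynomial in $\lam$), and then perform the Li--Zelenko normalization, which amounts to solving linear systems with coefficients polynomial in the Taylor coefficients of the flowed frame; by Cramer's rule the solution is rational in $\lam$. Since $\rho(\lam) = \dot g_\lam(0)$ only involves the $1$-jet at $t=0$ of $t\mapsto \mu_{\g(t)}(X_{11}(t),\dots,X_{kn_k}(t))$, I only need finitely many Taylor coefficients at $t=0$ of the $F_{ai}$ (equivalently of the projections $X_{ai}$), and each of those is a rational function of $\lam$ with denominator a suitable nonvanishing determinant (nonzero precisely on $\mathcal{A}$, the ampleness/equiregularity locus). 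Differentiating $g_\lam(t) = \log|\mu_{\g(t)}(X_{11}(t),\dots,X_{kn_k}(t))|$ in $t$ at $t=0$ gives $\rho(\lam)$ as a ratio of two such expressions, hence rational.

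So the proof structure I would write is: (1) fix coordinates and a smooth frame, reduce to showing $\dot g_\lam(0)$ is rational in $\lam$; (2) express $e^{g_\lam(t)} = |\mu_{\g(t)}(X_{11}(t),\dots,X_{kn_k}(t))|$ and note that $\dot g_\lam(0)$ depends only on the $1$-jets at $t=0$ of $\g$ and of the $X_{ai}$; (3) invoke the construction of the canonical frame (Theorem~\ref{p:can}, \cite{lizel}) to argue that these $1$-jets, computed via the flow of $\vec H$ and the algebraic Li--Zelenko normalization, are rational functions of $\lam$ on $\mathcal{A}$, with denominators nonvanishing exactly where the geodesic is ample and equiregular; (4) conclude that $\rho$, being a rational combination of these, is rational. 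The main obstacle, and the step deserving the most care, is (3): one must check that the normalization procedure that selects the canonical frame can genuinely be carried out algebraically in the relevant jets — i.e., that at each stage it reduces to solving finite-dimensional linear systems whose coefficient matrices are invertible on $\mathcal{A}$ — rather than requiring, say, an ODE to be integrated or a transcendental equation solved. This is essentially a bookkeeping unwinding of the recursion in \cite{lizel}, using that we only need a finite-order jet; I would phrase it as a lemma to the effect that, in a fixed frame, the coefficients $F_{ai}^{(\ell)}(0)$ for $\ell$ up to the needed order are rational in $\lam$, referring to \cite{curvature,barilaririzzicomp} for the detailed normalization, and then the rationality of $\rho$ follows immediately.
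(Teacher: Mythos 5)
Your proposal follows essentially the same route as the paper: both arguments rest on the observation that $\vec H$ is fiber-wise polynomial, so that the canonical frame $\{E_{ai},F_{ai}\}$, its projections $X_{ai}(t)=\pi_*F_{ai}(t)$, and hence the Taylor coefficients of $e^{g_\lambda(t)}=|\mu_{\g(t)}(X_{11}(t),\ldots,X_{kn_k}(t))|$ are rational in $\lambda$, with $\rho(\lambda)=\dot g_\lambda(0)$. The paper states the rationality of $E$ and $F$ in one line where you carefully unwind the Li--Zelenko normalization into linear-algebraic steps, but this is a matter of detail, not of method.
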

\begin{proof} Since $H$ is a quadratic function on fibers, then the vector field $\vec{H}$ is fiber-wise polynomial. Therefore for any vector field $V(t)\in T_{\lambda(t)}(T^*M)$, the quantity $\dot{V}=[\vec{H},V]$ is a rational function of the initial covector $\lambda$. It follows that both $E$ and $F$ are rational as functions of $\lambda$, and so are also the projections $X(t)=\pi_* F(t)$. We conclude that 
$$e^{g_\lambda(t)}=\left|\mu_{\g(t)}(X_{a1}(t),\ldots,X_{kn_k}(t))\right|,$$
and  the coefficients of its Taylor expansion, are rational expressions in $\lambda$.
\end{proof}

\begin{remark}
If the symbol is constant along the trajectory (i.e., symbols at different points are isomorphic) through a diffeomorphism $\phi$ and $\mu$ is preserved by $\phi$, then $\rho(\lambda(t))=0$.

Indeed it is sufficient to apply Proposition \ref{p:pppp}  to $\gamma_t(s):=\g(t+s)$ for every $s$ and one gets for $s,t\geq 0$ small $g(t)=g(t+s)$, that means that $g$ is constant and $\rho=0$.
\end{remark}
%The previous proposition is useful since together with the identity

\bdeff \label{d:unimod} We say that an Hamiltonian of our class \eqref{eq:classH} is \emph{unimodular} if there exists a volume form $\mu$ such that $\rho=0$ on $\mathcal{A}$.
\edeff
It is easy to see that if an Hamiltonian is unimodular with respect to some volume form $\mu$, then $\mu$ is unique.
\section{A formula for $\rho$} \label{s:formula}
In this section we provide a formula to compute $\rho$ in terms only of the volume $\mu$ and the linear maps $\calL_\tanf^i$. This will give another proof of the fact that the quantity $\rho(\lambda(t))$ depends only on the symbol and on $\mu$ along  $\g(t)=\pi(\lambda(t))$.

Fix a smooth volume $\mu$ on $M$ and let $Y_1,\ldots,Y_k$ be an orthonormal basis of $\calD$ in a neighborhood of $x_0$. Choose vector fields $Y_{k+1},\ldots,Y_n$ such that $Y_1,\ldots,Y_n$ is a local basis satisfying $\mu(Y_1,\ldots,Y_n)=1$ and define an auxiliary inner product on the tangent space declaring that this basis is orthonormal. 
% Suppose that the inner product on $\calD$ can be extended to a inner product on the whole tangent space $T_xM$, for $x$ in a neighborhood of $x_0$. Let $Y_1,\ldots,Y_n$ be an orthonormal basis around the point $x_0$ and suppose that the volume $\mu$ can be written near $x_0$ as the dual of this orthonormal basis. Under these hypothesis it is possible to find a simple formula for the invariant $\rho(\lambda)$. In particular equiregular slow growth distributions with the Popp's volume satisfy these assumptions.

Let $\gamma(t)=\pi( e^{t\vec{H}}(\lambda))$ be an ample and equiregular curve, with initial covector $\lambda\in T^*_{x_0}M$. Recall that, according to the definition of $g_\lambda(t)$, it holds
\begin{equation}\label{eq:linearterm}
g_\lambda(t) = \log|\mu(P_t)|,
\end{equation}
where $P_t$ is the parallelotope whose edges are the projections $\{X_{ai}(t)\}_{ai \in D}$ of the horizontal part of the canonical frame $X_{ai}(t) = \pi_* \circ e^{t\vec{H}}_* F_{ai}(t) \in T_{\gamma(t)} M$, namely
\begin{equation}\label{eq:parallelotope}
P_t=\bigwedge_{ai \in D} X_{ai}(t).
\end{equation}
By Lemma~\ref{lemma:Xai} we can write the adapted frame $\{X_{ai}\}_{ai \in D}$ in terms of the smooth linear maps $\mc{L}_\tanf$, and we obtain the following identity
\begin{equation}\label{eq:parallelotope2}
P_t = \bigwedge_{i=1}^m \bigwedge_{a_i = 1}^{d_i} X_{a_i i}(t) = \bigwedge_{i=1}^m \bigwedge_{a_i = 1}^{d_i} \mc{L}^{i-1}_\tanf( X_{a_i 1}(t)).
\end{equation}

Consider the flag $\{\calF_{\g(t)}^i\}_{i=1}^m$ and, using the auxiliary inner product induced by the choice of the basis, define the following sequence of subspaces of $T_{\g(t)}M$: for every $i\geq 1$ set  (with the understanding that $\calF^{0}=\{0\}$)
\begin{equation}
V_i:=\calF^i_{\g(t)}\cap (\calF^{i-1}_{\g(t)})^{\perp}.
\end{equation}
The subspace $V_{i}$ has dimension $\dim V_{i}=\dim \calF^i_{\g(t)}-\dim \calF^{i-1}_{\g(t)}$. Therefore there exists an isomorphism between $\calF^i_{\g(t)}/\calF^{i-1}_{\g(t)}$ and $V_i$, such that every $Y\in \calF^i_{\g(t)}/\calF^{i-1}_{\g(t)}$ is associated with the  element  of its equivalent class that lies in $V_i$. In conclusion, for the computation of $g_\lambda(t)$ in \eqref{eq:linearterm}, one can replace the vector $\mc{L}^{i-1}_\tanf( X_{a_i 1}(t))$ of the parallelotope in \eqref{eq:parallelotope2} with the corresponding equivalent element in $V_i$.

%we denote by $V_1=\calF^1_{\g(t)}$ the first layer of the flag. By the inner product on $T_{\g(t)}M$ it is well-defined the space $(\calF^1_{\g(t)})^{\perp}$, perpendicular to the first layer. Let $V_2:=\calF^2_{\g(t)}\cap (\calF^1_{\g(t)})^{\perp}$. This subspace has dimension $\dim \calF^2_{\g(t)}-\dim \calF^1_{\g(t)}$. Going on in this way, for $1<i\leq m$ let $V_i:=\calF^i_{\g(t)}\cap (\calF^{i-1}_{\g(t)})^{\perp}$. It has dimension $\dim \calF^i_{\g(t)}-\dim \calF^{i-1}_{\g(t)}$. 

Now consider the surjective map $\calL_{\tanf}^{i-1}:\calD_{\g(t)}\to \calF^{i}_{\g(t)}/\calF^{i-1}_{\g(t)}$. For every $i=1,\ldots, m$ this map descends to an isomorphism $\calL_{\tanf}^{i-1}:\calD_{\g(t)}/{\ker \calL_\tanf^{i-1}}\to \calF^{i}_{\g(t)}/\calF^{i-1}_{\g(t)} \simeq V_{i}$. Then, thanks to the inner product structure on $V_{i}$, we can consider the map 
$$(\calL_\tanf^{i-1})^* \circ \calL_\tanf^{i-1} : \calD_{\g(t)}/{\ker \calL_\tanf^{i-1}} \to \calD_{\g(t)}/{\ker \calL_\tanf^{i-1}}$$
 obtained by composing $\calL_\tanf^{i-1}$ with its adjoint $(\calL_\tanf^{i-1})^*$. This composition is a symmetric invertible operator and we define the smooth family of symmetric operators
\begin{equation}\label{eq:Mt}
M_i(t) :=  (\mc{L}_\tanf^{i-1})^* \circ \mc{L}_\tanf^{i-1}: \distr_{\gamma(t)}/ \ker \mc{L}_\tanf^{i-1} \to \distr_{\gamma(t)}/ \ker \mc{L}_\tanf^{i-1}, \qquad i=1,\ldots,m.
\end{equation}
Recall in particular that for every $v_1,v_2\in \distr_{\gamma(t)}/ \ker \mc{L}_\tanf^{i-1}$ it holds the identity
$$\langle(\mc{L}_\tanf^{i-1})^* \circ \mc{L}_\tanf^{i-1} v_1,v_2\rangle_{ \distr_{\gamma(t)}}=\langle \mc{L}_\tanf^{i-1} v_1,\mc{L}_\tanf^{i-1}v_2\rangle_{ V_i}.$$ 

By the expression of the parallelotope $P_t$ with elements of the subspaces $V_i$ and the definition of $\mu$ as the dual of an orthonormal basis of $T_{\g(t)}M$, we have
\begin{equation}
|\mu(P_t)| =\left|\mu\left(\bigwedge_{i=1}^m \bigwedge_{a_i = 1}^{d_i} \mc{L}^{i-1}_\tanf( X_{a_i 1}(t))\right)\right|= \sqrt{\prod_{i=1}^m \det M_i(t)}.
\end{equation}

This formula does not depend on the chosen extension $Y_{k+1},\ldots,Y_n$ of the orthonormal basis of $\calD$, since in the computations we only used that the volume $\mu$ evaluated at this basis is equal to $1$.
For $\rho(\lambda)=\left.\frac{d}{dt}\right|_{t=0} \log |\mu(P_t)|$, a simple computation shows that
\begin{equation}\label{eq:MMM}
\rho(\lambda)=\frac{1}{2}\sum_{i=1}^m \tr\left(M_i(0)^{-1}\dot{M}_i(0)\right).
\end{equation}
%We stress once more, that this last formula is expressed uniquely in terms of the volume $\mu$ along the curve and the symbol of $\g(t)$.

\section{Sub-Riemannian manifolds} \label{s:sr}
In this section we specialize our construction to sub-Riemannian manifolds and we investigate in more details the properties of the invariant $\rho$ for these structures. 

Recall that a sub-Riemannian structure on a smooth manifold $M$ is given by a completely non-integrable vector distribution $\distr$ endowed with an inner product on it. In particular $\distr$ has constant rank but does not need to be equiregular. An \emph{admissible curve} is a curve that is almost everywhere tangent to $\distr$ and for such a curve $\g$ we can compute its length by the classical formula
$$\ell(\g)=\int_{0}^{T}\|\dot\gamma(s)\|ds.$$ 
Once we fix a local orthonormal frame $X_1,\ldots,X_k$ for $g$ on $\distr$, the problem of finding the geodesics in a sub-Riemannian manifold, namely the problem of minimizing the length of a curve between two fixed points, is equivalent to the minimization of the energy (with $T>0$ fixed) and can be rewritten as the control problem
\begin{equation}
\left\{
\begin{array}{l}
\dot{x}=\sum_{i=1}^k u_i X_i(x)\\
J_{T}(u)=\frac{1}{2}\int_0^T \|u(s)\|^2 ds \leadsto\min,
\end{array}\right.
\end{equation}
This is an affine control problem, with zero drift field and quadratic cost. The complete non-integrability assumption on the distribution $\distr=\text{span}\{X_1,\ldots,X_k\}$ implies that the assumptions (H0)-(H1) are satisfied. 
%Moreover, in this case, the classical Rashewskii-Chow theorem says that (H2) is also valid, hence all our main assumptions are satisfied. 
The Hamiltonian function is fiber-wise quadratic and convex on fibers. In coordinates it is written as
$$H(p,x)=\frac{1}{2}\sum_{i=1}^k \la p, X_i(x)\ra^2,\qquad (p,x)\in T^{*}M.$$

Denote by $\mathcal{A}$ the set of $\lambda\in T^{*}M$ such that the corresponding trajectory is ample and equiregular. For a fixed $x\in M$ we set $\mathcal{A}_{x}=\mathcal{A}\cap T^{*}_{x}M$.

\begin{prop} \label{p:aequi} The set $\mathcal{A}$ is a non-empty open dense subset of $T^{*}M$.
\end{prop}
\begin{proof} Denote by $\mathcal{F}^{i}_{\lam}:=\mathcal{F}^{i}_{\gamma(0)}$ where $\g$ is the trajectory associated with initial covector $\lam$ and set $k_{i}(\lambda)=\dim \mathcal{F}^{i}_{\lam}$. By semicontinuity of the rank the integer valued and bounded function $\lambda\mapsto k_{i}(\lambda)$ is locally constant on an open dense set $\Omega_{i}$ of $T^{*}M$. Since the intersection of a finite number of open dense sets is open and dense, if follows that the set $\Omega=\cap_{i} \Omega_{i}$ where the growth vector is locally constant is open and dense in $T^{*}M$. To prove that it is non-empty fix an arbitrary point $x\in M$ and consider a $\lambda\in T^{*}_{x}M$ such that the corresponding trajectory is ample for all $t$ (the existence of such a trajectory is proved in \cite[Section 5.2]{curvature}). Since the functions $t\mapsto \dim \mathcal{F}^{i}_{\lam(t)}$ are lower semi-continuous and bounded with respect to $t$, repeating the previous argument we have that they are locally constant on an open dense set of $[0,T]$, then the curve is equiregular at these points.    
\end{proof}
We stress once more that for a fixed $x\in M$ one can have $\mathcal{A}_{x}=\emptyset$, as for instance in the non equiregular case. On the other hand, for every fixed $x$ the set $\lambda$ such that the corresponding trajectory is ample is open and dense and on each of these trajectories we can find equiregular points arbitrarily close to $x$. 
\subsection{Homogeneity properties}

For all $c>0$, let $H_c := H^{-1}(c/2)$ be the level set of the Hamiltonian function. In particular $H_1$ is the unit cotangent bundle: the set of initial covectors associated with unit-speed geodesics. Since the Hamiltonian function is fiber-wise quadratic, we have the following property for any $c>0$
\begin{equation}\label{eq:commutation}
e^{t \vec{H}}(c \lambda) = c e^{c t\vec{H}}(\lambda).
\end{equation}
Let $\delta_c : T^*M \to T^*M$ be the dilation along the fibers $\delta_c(\lambda) = c\lambda$ (if we write $\lam=(p,x)$ this means $\delta_c(p,x) = (cp,x)$). Indeed $\alpha \mapsto \delta_{e^\alpha}$ is a one-parameter group of diffeomorphisms. Its generator is the \emph{Euler vector field} $\mathfrak{e} \in \Gamma(\ver)$, and is characterized by $\delta_c  = e^{(\ln c)\mathfrak{e}}$.
We can rewrite~\eqref{eq:commutation} as the following commutation rule for the flows of $\vec{H}$ and $\mathfrak{e}$:
\begin{equation}
e^{t\vec{H}} \circ \delta_c = \delta_c \circ e^{ c t \vec{H}}.
\end{equation}
Observe that $\delta_c$ maps $H_1$ diffeomorphically on $H_{c}$. Let $\lambda \in H_1$ be associated with an ample, equiregular geodesic with Young diagram $\y$. Clearly also the geodesic associated with $\lambda^c:=c \lambda \in H_{c}$ is ample and equiregular, with the same Young diagram. This corresponds  to a reparametrization of the same curve: in fact $\lambda^c(t) =e^{t\vec{H}}(c\lam)= c(\lambda(ct))$, hence $\gamma^c(t) = \pi(\lambda^c(t))= \gamma(ct)$. The canonical frame associated with $\lambda^c(t)$ can be recovered by the one associated with $\lambda(t)$ as shown in the following proposition. Its proof can be found in \cite{BarRizJac}. 

%\begin{theorem}[Homogeneity properties of the directional curvature]\label{t:homogR}
%For any superbox $\alpha \in \y$, let $|\alpha|$ denote the column index of $\alpha$. Denoting $\lam^{c}(t)=e^{t\vec{H}}(c\lam)$ we have, for any $c>0$
%\begin{equation}
%\mathfrak{R}^{\alpha\beta}_{\lambda^{c}(t)} = c^{|\alpha|+ |\beta|} \mathfrak{R}^{\alpha\beta}_{\lambda(ct)},
%\end{equation}
%\end{theorem}
%\begin{remark}
%In the Riemannian setting, $\y$ has only one superbox with $|\al|=1$  (see Fig.~\ref{f:Yd2}). Then $\mathfrak{R}_{\lambda}:=\mathfrak{R}^{\alpha\alpha}_{\lambda(0)}$ is homogeneous of degree $2$ as a function of $\lam$. 
%\end{remark}
%Theorem \ref{t:homogR} follows  directly from the next result and Definition \ref{d:curv}.
%

\begin{prop}\label{p:framescaling}
Let $\lambda \in H_1$ and $\{E_{ai},F_{ai}\}_{ai \in \y}$ be the associated canonical frame along the extremal $\lambda(t)$. Let $c>0$ and define, for $ai \in \y$
\begin{equation}
E^c_{ai}(t):=\frac{1}{c^i}(d_{\lambda(ct)} P_{c}) E_{ai}(ct), \qquad F_{ai}^c(t):=c^{i-1}(d_{\lambda(ct)} \delta_c) F_{ai}(ct).
\end{equation}
The moving frame $\{E^c_{ai}(t),F^c_{ai}(t)\}_{ai \in \y} \in T_{\lambda^c(t)}(T^*M)$ is a canonical frame associated with the initial covector $\lambda^c =c\lambda\in H_{c}$, with matrix
\begin{equation}\label{eq:homog}
R^{\lambda^{c}}_{ai,bj}(t) = c^{i+j} R^\lambda_{ai,bj}(ct).
\end{equation}
\end{prop}

By  Proposition \ref{p:framescaling}, it follows the following homogeneity property of $g_{\lambda}$, and as a consequence of the function $\rho$.
\begin{lemma}\label{l:sgsg}
For every $\lambda\in \mathcal{A}_{x}$ and $c>0$ one has $c\lambda\in \mathcal{A}_{x}$. Moreover 
$$e^{g_{c\lambda}(t)}=c^{\calQ-n}e^{g_{\lambda}(ct)},$$
where $n=\dim M$ and $\calQ:=\sum_{i=1}^{m} id_{i}$. 
\end{lemma}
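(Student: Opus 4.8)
The plan is to derive the identity directly from the scaling rule for the canonical frame in Proposition~\ref{p:framescaling}. For the first assertion, recall that $\gamma^{c}(t)=\pi(\lambda^{c}(t))=\gamma(ct)$ is a reparametrization of $\gamma$; since the flag $\mathcal{F}^{i}_{\gamma^{c}(t)}$ is built from iterated Lie derivatives of horizontal fields along an admissible extension of $\dot\gamma^{c}$, and rescaling such an extension by a positive constant leaves the relevant spans unchanged, the growth vector of $\gamma^{c}$ at time $t$ equals that of $\gamma$ at time $ct$; hence $\gamma^{c}$ is again ample and equiregular with the same Young diagram $\y$, i.e.\ $c\lambda\in\mathcal{A}_{x}$ (this is the observation preceding the statement; alternatively it is forced by the very existence of the canonical frame along $\lambda^{c}$ produced by Proposition~\ref{p:framescaling}). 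For the quantitative part, recall from \eqref{eq:volumeomega}--\eqref{eq:g} that, since $\omega_{\gamma(t)}$ is dual to the frame $\{X_{ai}(t)=\pi_{*}F_{ai}(t)\}_{ai\in D}$, one has $e^{g_{\lambda}(t)}=\bigl|\mu_{\gamma(t)}(X_{11}(t),\dots,X_{kn_{k}}(t))\bigr|$, and likewise $e^{g_{c\lambda}(t)}=\bigl|\mu_{\gamma^{c}(t)}(X^{c}_{11}(t),\dots,X^{c}_{kn_{k}}(t))\bigr|$ in terms of the canonical frame $\{F^{c}_{ai}(t)\}$ along $\lambda^{c}(t)$ of Proposition~\ref{p:framescaling} and its projections $X^{c}_{ai}(t):=\pi_{*}F^{c}_{ai}(t)$.

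The key point is that the $F$-scaling collapses to a scaling of the $X_{ai}$. Indeed $\pi\circ\delta_{c}=\pi$, so differentiating gives $\pi_{*}\circ d\delta_{c}=\pi_{*}$; combined with $F^{c}_{ai}(t)=c^{\,i-1}(d_{\lambda(ct)}\delta_{c})F_{ai}(ct)$ this yields
\[
X^{c}_{ai}(t)=c^{\,i-1}\,\pi_{*}F_{ai}(ct)=c^{\,i-1}X_{ai}(ct)\in T_{\gamma(ct)}M=T_{\gamma^{c}(t)}M .
\]
Substituting into the expression for $e^{g_{c\lambda}(t)}$ and using multilinearity of the $n$-form $\mu$ to extract the factor $c^{\,i-1}$ from the slot labelled by each box $ai\in D$, we get
\[
e^{g_{c\lambda}(t)}=\Bigl(\prod_{ai\in D}c^{\,i-1}\Bigr)\bigl|\mu_{\gamma(ct)}(X_{11}(ct),\dots,X_{kn_{k}}(ct))\bigr|=c^{\,\sum_{ai\in D}(i-1)}\,e^{g_{\lambda}(ct)} .
\]

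It remains to compute the exponent. Grouping the boxes of $\y$ by columns, $\sum_{ai\in D}(i-1)=\sum_{i=1}^{m}(i-1)d_{i}$, and since $\sum_{i=1}^{m}d_{i}=n$ this equals $\sum_{i=1}^{m}i\,d_{i}-n=\tfrac12\bigl(\mathcal{N}(\lambda)-n\bigr)$ by \eqref{eq:Nlambda}; for an ample equiregular geodesic the increments $d_{i}$ realize those of the flag of the distribution, so $\sum_{i}i\,d_{i}$ is precisely the Hausdorff dimension $\mathcal{Q}$ of $M$ (Mitchell's formula). Hence the exponent is $\mathcal{Q}-n$, which proves the identity for $\lambda\in H_{1}$; the general case follows by writing $\lambda=c_{0}\lambda_{0}$ with $\lambda_{0}\in H_{1}\cap T^{*}_{x}M$ and applying the $H_{1}$ case twice, using $\delta_{c}\circ\delta_{c_{0}}=\delta_{cc_{0}}$. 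The argument is elementary throughout; the only steps deserving attention are the collapse $\pi_{*}\circ d\delta_{c}=\pi_{*}$, which is what makes the $F$-scaling reduce to the clean power law for the $X_{ai}$, and the bookkeeping of the exponent, the identification $\mathcal{Q}=\sum_{i}i\,d_{i}$ being a standard fact about equiregular structures.
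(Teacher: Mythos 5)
Your proof is correct and follows essentially the same route as the paper: both rest on Proposition~\ref{p:framescaling} via the projected scaling $X^{c}_{ai}(t)=c^{i-1}X_{ai}(ct)$, multilinearity of $\mu$ applied to the parallelotope, and the exponent count $\sum_{i}(i-1)d_{i}=\calQ-n$. The only differences are cosmetic: you spell out $\pi_{*}\circ d_{\lambda}\delta_{c}=\pi_{*}$ and the reduction from $H_{1}$ to general $\lambda$, which the paper leaves implicit.
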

\begin{proof}
Let $X^c_{ai}(t)$ and $X_{ai}(ct)$ be the basis of $T_{\g^c(t)}M=T_{\g(ct)}M$ induced by the canonical frame. Then by Proposition \ref{p:framescaling} it holds the identity $X^c_{ai}(t)=c^{i-1}X_{ai}(ct).$ Therefore by the definition of $g_\lambda$ and $g_{c\lambda}$ we have
\begin{align}
e^{g_{c\lambda}(t)}&=|\mu_{\g(ct)}(X^c_{11}(t),\ldots,X^c_{kn_k}(t))|\\
&=\prod_{i=1}^m\prod_{j=1}^{d_i} c^{i-1} |\mu_{\g(ct)}(X_{11}(ct),\ldots,X_{kn_k}(ct))|= \prod_{i=1}^m c^{(i-1)d_{i}} e^{g_{\lambda}(ct)}\\
&=c^{\calQ-n} e^{g_{\lambda}(ct)}. \qedhere
\end{align}

\end{proof}
Lemma \ref{l:sgsg} gives $g_{c\lambda}(t)=g_{\lambda}(ct)+(\calQ-n)\log(c)$ and differentiating at $t=0$ we obtain
\begin{cor} For every $\lambda\in \mathcal{A}_{x}$ and $c>0$ one has
\begin{equation}\label{eq:rhoscale}
\rho(c\lambda)=c\rho(\lambda) .
\end{equation}
\end{cor}
\begin{remark} The function $\rho$ is homogeneous of degree one but, in general, it might not be smooth. Indeed using  formula \eqref{eq:MMM} and denoting by $M_{i}^{c}(t)$ the matrices associated with the reparametrized curve $\gamma^c$, one can show from the homogeneity properties of Proposition \ref{p:framescaling} that
\begin{equation}\label{eq:MMM2}
M_{i}^{c}(t)=c^{2i-2}M_{i}(ct),\qquad \dot M_{i}^{c}(t)=c^{2i-1}\dot M_{i}(ct)
\end{equation}
from which it follows that $\rho$ is a rational function in $\lambda$ with the degree of the denominator which is at most $\lam^{2m-2}$. Notice that using \eqref{eq:MMM2} at $t=0$ one can obtain another proof of \eqref{eq:rhoscale} by
$$\rho(c\lam)=\frac12 \sum_{i=1}^{m}\tr\left(M^{c}_i(0)^{-1}\dot{M}^{c}_i(0)\right)=\frac{1}{2}\sum_{i=1}^m \tr\left(c M_i(0)^{-1}\dot{M}_i(0)\right) =c\rho(\lam).$$
\end{remark}

\subsection{Contact manifolds}\label{sec:contact} 
In this section we focus on the special case of a contact sub-Riemannian manifold. Recall that a sub-Riemannian manifold $(M,\calD,g)$ of odd dimension $2n+1$ is \emph{contact} if there exists a non degenerate 1-form $\omega\in\Lambda^1(M)$, such that $\calD_x=\ker \omega_x$ for every $x\in M$ and $d\omega|_{\calD}$ is non degenerate. In this case $\calD$ is called \emph{contact distribution}. %Since $d\omega|_\calD$ is not degenerate, the distribution is equiregular of step 2.

\begin{remark}
Given a sub-Riemannian contact manifold, the contact form $\omega$ is not unique. Indeed if $\omega$ is a contact form then also $\alpha\omega$ is a contact form for any non-vanishing smooth function $\alpha$. Once a contact form $\omega$ is fixed we can associate the \emph{Reeb vector field}, $X_0$, which is the unique vector field such that $\omega(X_0)=1$ and $d\omega(X_0,\cdot)=0$. Since the Reeb vector field $X_0$ is transversal to $\calD$, we normalize $\omega$ so that $\|X_0\|_{\calD^2/\calD}=1$.
\end{remark}

The contact form $\omega$ induces a fiber-wise linear map $J:\calD\to\calD$, defined by
$$\langle JX,Y\rangle=d\omega(X,Y)\qquad \forall X,Y\in\calD.$$
Observe that the restriction $J_x:=J|_{\distr_{x}}$ is a linear skew-symmetric operator on $(\calD_x,g_{x})$. % and it is well-defined its Hilbert-Schimidt norm $\|J_q\|=\left(\sum_{i,j=1}^{2n}\langle X_i,JX_j\rangle^2\right)^{1/2}$. In particular it is possible to normalize the contact form $\omega$ so that $\|J_q\|=1$.

Let $X_1,\ldots,X_{2n}$ be a local orthonormal frame of $\calD$, then $X_1,\ldots,X_{2n},X_0$ is a local  frame adapted to the flag of the distribution. Let $\nu^1,\ldots,\nu^{2n},\nu^0$ be the associated dual frame. The \emph{Popp volume} $\mu$ on $M$ (see \cite{nostropopp} for more details) is the volume
\begin{equation}
\mu=\nu^1\wedge\ldots\wedge\nu^{2n}\wedge\nu^0.
\label{eq:contact popp}
\end{equation}

On contact sub-Riemannian manifolds, every non constant geodesic $\g(t)=\pi (e^{t\vec{H}}(\lambda))$ is ample and equiregular with the same growth vector $(2n,2n+1)$. Moreover, it is possible to compute explicitly the value of the associated smooth function $g_\lambda(t)$ and the constant $C_\lam$ of Theorem \ref{th:main} (cf.\ Remark \ref{remark:rema}).

We compute now the value of the function $g_\lambda(t)$ with respect to the Popp's volume and a given geodesic. 
\begin{prop}Let $\gamma(t)=\pi (e^{t\vec{H}}(\lambda))$ be a geodesic on a contact manifold. Then
\begin{equation}
\rho(\lambda)=\frac{d}{dt}\bigg|_{t=0}\log \|J\dot\g(t)\|,
\label{eq:g log}
\end{equation}
In particular if $J^{2}=-1$ then $\rho=0$.
\end{prop}
\begin{proof}
Recall that, by definition of $g_\lambda$ (cf.\ \eqref{eq:g}), one has
\begin{equation}
g_\lambda(t)=\log |\mu(P_t)|
\label{eq:g log}
\end{equation}
where $P_t$ is the parallelotope whose edges are given by the projections $X_{ai}(t)$  of the fields $F_{ai}(t)$ of a canonical basis along $\lambda(t)$ on $T_{\g(t)}M$. 

Let $\tanf$ be an horizontal extension of the tangent vector field $\dot{\g}(t)$ and consider the map $\calL_\tanf:\calD_{\g(t)}\to \calD^2_{\g(t)}/\calD_{\g(t)}$. Since the manifold is contact, this map is surjective. and its kernel is a subspace of $\calD_{\g(t)}$ of dimension $2n-1$. Let $X_1,\ldots,X_{2n}$ be an orthonormal basis of $\calD_{\g(t)}$ such that $X_1\in (\ker \calL_\tanf)^\perp$ and  $X_2,\ldots,X_{2n}\in \ker \calL_\tanf$. Then  there exists an orthogonal map that transforms the first $2n$ vectors projections of the canonical basis, in this basis. 

Notice that the definition \eqref{eq:g log} of $g_\lambda(t)$ does not change if we replace the first $2n$ edges of the parallelotope by $X_1,\ldots,X_{2n}$. Moreover, by Lemma \ref{lemma:Xai}, the last projected vector $X_{ai}=X_{1,2}$ can be written as 
$$X_{1,2}(t)=-\calL_\tanf X_1(t) \mod \calD.$$ 
Notice that $X_1$ is not in the kernel of $\calL_\tanf$, thus this basis is also adapted to the Young diagram of $\g$. Thanks to \eqref{eq:contact popp}, the Popp volume of the parallelotope is equal to the length of the component of $\calL_\tanf X_1(t)$ with respect to $X_0$, namely
$$|\mu(P_t)|=|\langle [\tanf,X_1],X_0\rangle_{\g(t)}|.$$
This quantity can be written equivalently in terms of the map $J$. Indeed
\begin{equation}
\begin{split}
|\mu(P_t)|&=|\langle [\tanf,X_1],X_0\rangle_{\g(t)}|=|\omega_{\g(t)}([\tanf,X_1])|=|d\omega_{\g(t)}(\tanf,X_1)|\\
&=|\langle J_{\g(t)}\tanf,X_1\rangle_{\g(t)}|.
\end{split}
\end{equation}
Since $\langle J\tanf,Y\rangle=-\omega(\calL_\tanf Y)$ for every horizontal field $Y$, then $\ker \calL_\tanf=J\tanf^\perp$. This implies that $J\tanf$ is a multiple of $X_1$, i.e., $J\tanf=\|J\tanf\| X_1$. Then we simplify the formula for $|\mu(P_t)|$ as
$$|\mu(P_t)|=|\langle J_{\g(t)}\tanf,X_1\rangle_{\g(t)}|=\|J\tanf_{\g(t)}\|.$$
Notice that if $J^{2}=-1$, then $J$ is an isometry, hence $\|J\tanf_{\g(t)}\|=\|\tanf_{\g(t)}\|=1$.
\end{proof}
\begin{remark}\label{rem:popprho3d} If $\dim M=3$, then $\rho(\lambda(t))=0$  for every $t$. Indeed $\ker \calL_\tanf$ has dimension 1 and $\tanf=\|\tanf\| X_2$. Moreover, if we denote by $c_{ij}^k$ the structure constants such that $[X_i,X_j]=\sum_{k=0}^2 c_{ij}^k X_k$, then the normalization of $\omega$ implies $c_{12}^0=-1$ and
\begin{equation}
\begin{split}
|\mu(P_t)|&=|\langle [\tanf,X_1],X_0\rangle|=\|\tanf_{\g(t)}\|\, |\langle[X_2,X_1],X_0\rangle|\\
&=\|\tanf_{\g(t)}\| =1.
\end{split}
\end{equation}\end{remark}

\begin{remark} Notice that even in the contact case, not every structure is unimodular (in the sense of Definition \ref{d:unimod}). When $J^{2}=-1$ then the structure is unimodular, choosing $\mu$ as the Popp volume. See also \cite{ABR-Contact} for the computation of the curvature in the contact case.
\end{remark}

\section{Proof of the main result} \label{s:proof}
In this section we prove the following proposition, that is Theorem \ref{th:main} written along the canonical frame. 
\begin{prop}\label{prop:main}
Let $\g(t)=\pi(e^{t\vec{H}}(\lambda))$ be an ample equiregular geodesic and let $\omega_{\g(t)}$ be the $n$-form defined in \eqref{eq:volumeomega}. Given a volume $\mu$ on $M$, define implicitly the smooth function $g_{\lam}:[0,T]\to \bbR$ by $\mu_{\g(t)}=e^{g_{\lam}(t)}\omega_{\g(t)}$. Then we have the following Taylor expansion
\begin{equation}
\left.\left\langle \left(\pi\circ e^{t\vec{H}}\right)^*\mu, E(0)\right\rangle\right|_{\lambda}=\;C_{\lambda}t^{\mc{N}(\lambda)}e^{g_{\lam}(t)}\left(1-t^2\frac{\mathrm{tr} \mc{R}_\lambda}{6}+o(t^2) \right)
\label{eq:volumevariation}
\end{equation}
where $E$ is the $n$-dimensional row vector introduced in Remark \ref{rmk:notation} and $C_{\lambda}$ depends only on the structure of the Young diagram. In particular we have the identity
$$\rho(\lam)\mu_{\lam}^{*}= \frac{d}{dt}\bigg|_{t=0} \log \left(t^{-\mathcal{N}(\lambda)}\left.(\pi\circ e^{t\vec H})^{*}\mu\right|_{\V_{\lambda}}\right)=\dot g_{\lam}(0)\mu_{\lam}^{*}.$$
\end{prop}
\begin{remark}\label{remark:rema}
As it follows from the proof, the constant $C_{\lambda}$ is explicitly computed  by
\begin{equation}
C_\lambda=\prod_{a=1}^k \frac{\prod_{j=0}^{n_a-1}j!}{\prod_{j=n_a}^{2n_a-1}j!}>0.
\label{eq:c1}
\end{equation} 
In the contact case, since the Young diagram is equal for all $\lambda$, with $2n$ rows of length 1, and one row of length $2$, the leading constant is $C_{\lambda}=\frac{1}{12}$.
\end{remark}

\begin{proof}
The left hand side of the equation \eqref{eq:volumevariation} can be rewritten as
$$\left.\left\langle \left(\pi \circ e^{t\vec{H}}\right)^*\mu, E(0)\right\rangle\right|_{\lambda}=\left.\left\langle e^{g(t)}\omega,  \left(\pi \circ e^{t\vec{H}}\right)_*E(0)\right\rangle\right|_{\g(t)}.$$
 
For every $ai\in D$, the field $e^{t\vec{H}}_*E_{ai}(0)$ is a Jacobi field, so in coordinates with respect to the canonical frame we can write 
$$e^{t\vec{H}}_*E(0)=E(t)M(t)+F(t)N(t)$$ 
for $n\times n$ matrices $M$ and $N$, that satisfy the Jacobi equations \eqref{eq:Jacobicoord}. More explicitly we have the system
\begin{equation}
\left\{\begin{array}{ll}
\dot{N}_{ai,bj}=N_{ai-1,bj} & \mbox{ if } i\neq 1\\
\dot{N}_{a1,bj}=M_{a1,bj}\\
\dot{M}_{ai,bj}=-R(t)_{ai,ch}N_{ch,bj}-M_{ai+1,bj} & \mbox{ if } i\neq n_a\\
\dot{M}_{an_a,bj}=-R(t)_{an_a,ch}N_{ch,bj}.
\end{array}
\right.
\label{eq:derivateMN}
\end{equation}
Moreover $M(0)=\mathrm{Id}$ and $N(0)=0$. It follows that 
$$\left.\left\langle \left(\pi \circ e^{t\vec{H}}\right)^*\mu, E(0)\right\rangle\right|_{\lambda}=e^{g(t)}\det N(t).$$
In what follows we compute the Taylor expansion of the matrix $N(t)$ at $0$. 

Let us first discuss the proof in the case of a Young diagram made of a single row. In this case, for simplicity, we will omit the index $a$ in the notation for $N$ and $M$. Fix integers $1\leq i,j\leq n$. The coefficients $N_{ij}$ can be computed by integrating  $M_{1j}$. So let us find the asymptotic expansion of $M_{1j}$. Notice that
\begin{equation}
\begin{split}
&M_{1j}(0)=\delta_{1j}\\
&\dot{M}_{1j}=-R_{1h}N_{hj}-M_{2j}(1-\delta_{1n})\\
&\ddot M_{1j}=-\dot{R}_{1h}N_{hj}-\sum_{h\neq 1}R_{1h}N_{h-1j}-R_{11}M_{1j}+(1-\delta_{1n})\left(R_{2h}N_{hj}+M_{3j}(1-\delta_{2n})\right)
\end{split}
\end{equation}
In these equations the only non-vanishing component at $t=0$ is $M_{jj}(0)=1$, that can be obtained only by differentiating terms $M_{ij}$ with $i<j$. Thus, in the expansion of $M_{1j}(t)$, the element $M_{jj}$ appears first at $(j-1)$-th derivative. Next, it appears, multiplied by $R_{11}(0)$, at $(j+1)$-th derivative.
%, since in $\ddot{M}_{1j}$ we have again a component depending on $M_{1j}$, while all the other components need more derivatives to generate a non-vanishing term. 
We can conclude that the asymptotics of $M_{1j}$ at $t=0$  is
$$M_{1j}(t)=(-1)^{j-1}\frac{t^{j-1}}{(j-1)!}-(-1)^{j-1}R_{11}(0)\frac{t^{j+1}}{(j+1)!}+o(t^{j+1}).$$
Since $M_{1j}$ is the $i$-th derivative of $N_{ij}$ and $N(0)=0$, we have also the expansion for $N$:
$$N_{ij}(t)=(-1)^{j-1}\frac{t^{i+j-1}}{(i+j-1)!}-(-1)^{j-1}R_{11}(0)\frac{t^{i+j+1}}{(i+j+1)!}+o(t^{i+j+1}).$$

Let us now consider a general distribution of dimension $k> 1$. Now we have to study the whole system in \eqref{eq:derivateMN}. Fix indeces $ai,bj\in D$. Again, to find  $N_{ai,bj}$ it's enough to determine the expansion of $M_{a1,bj}$, by integration. To compute the latter, notice that 
\begin{equation}
\begin{split}
&M_{a1,bj}(0)=\delta_{ab}\delta_{1j}\\
&\dot{M}_{a1,bj}=-R_{a1,ch}N_{ch,bj}-M_{a2,bj}(1-\delta_{1n_a})\\
&M_{a1,bj}^{(2)}=-\dot{R}_{a1,ch}N_{ch,bj}-\sum_{h\neq 1}R_{a1,ch}N_{ch-1,bj}-R_{a1,c1}M_{c1,bj}\\
&\qquad \qquad+(1-\delta_{1n_a})\left(R_{a2,ch}N_{ch,bj}+M_{a3,bj}(1-\delta_{2n_a})\right)
\end{split}
\end{equation}
When $a= b$,   the argument is similar to the one discussed above when $k=1$ (in this case every derivative  generates also terms like $M_{ch,aj}$, but these terms, when $c\neq a$, need higher order derivatives to generate non-vanishing terms). One obtains:
\begin{equation*}
\begin{split}
M_{a1,aj}(t)&=(-1)^{j-1}\frac{t^{j-1}}{(j-1)!}-(-1)^{j-1}R_{a1,a1}\frac{t^{j+1}}{(j+1)!}+o(t^{j+1}),\\
N_{ai,aj}(t)&=(-1)^{j-1}\frac{t^{i+j-1}}{(i+j-1)!}-(-1)^{j-1}R_{a1,a1}\frac{t^{i+j+1}}{(i+j+1)!}+o(t^{i+j+1}).
\end{split}
\end{equation*}
On the other hand, if $a\neq b$, then the first term different from zero of $M_{a1,bj}$ appears at  $j+1$-th derivative, multiplied by $R_{a1,b1}$, %since we need first to generate the element $M_{b1,bj}$, that appears only at the second derivative of $M_{a1,bj}$. 
Therefore the Taylor expansions of $M_{ai,bj}$ and of a generic element of the matrix $N$ can be derived as
\begin{equation*}
\begin{split}
M_{a1,bj}(t)&=\delta_{ab}(-1)^{j-1}\frac{t^{j-1}}{(j-1)!}-(-1)^{j-1}R_{a1,b1}(0)\frac{t^{j+1}}{(j+1)!}+o(t^{j+1}),\\
N_{ai,bj}(t)&=\widetilde{N}_{ai,bj}t^{i+j-1}-G_{ai,bj}t^{i+j+1}+o(t^{i+j+1}).
\end{split}
\end{equation*}
where the constant matrices $\widetilde{N}$ and $G$ are defined by
\begin{equation*}
%\begin{split}
\widetilde{N}_{ai,bj}:=(-1)^{j-1}\frac{\delta_{ab}}{(i+j-1)!},\qquad  G_{ai,bj}:=(-1)^{j-1}\frac{R_{a1,b1}(0)}{(i+j+1)!}.
%\end{split}
\end{equation*}

To find the asymptotics of the left hand side of \eqref{eq:volumevariation}, we need only to determine the asymptotic of $\det N(t)$.
Let $I_{\sqrt{t}}$ be a $n$-dimensional diagonal matrix, whose $jj$-th element is equal to $\sqrt{t}^{2i-1}$, for $k_{i-1}<j\leq k_i$. Then the Taylor expansion of $N$ can be written as $N(t)=I_{\sqrt{t}}\left(\widetilde{N}-t^2 G+O(t^3)\right)I_{\sqrt{t}}$ and its determinant is
$$\det N(t)=\det \widetilde{N}\; t^{\calN}\left(1-\tr\left(\widetilde{N}^{-1} G\right)t^2+o(t^2)\right),$$
where $\calN=\calN(\lambda)$ is the geodesic dimension given in Definition \ref{def:geodesic dimension}. 
The main coefficient is computed in the following lemma, whose proof is contained  in Appendix \ref{s:appA}.
\begin{lemma}\label{th:conti}
The determinant of $\widetilde{N}$ is given by
\begin{equation}
C_{\lambda}=\det \widetilde{N}=\prod_{a=1}^k \frac{\prod_{j=0}^{n_a-1}j!}{\prod_{j=n_a}^{2n_a-1}j!}.
\label{eq:c0}
\end{equation}
\end{lemma}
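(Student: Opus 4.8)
The goal is to compute $\det\widetilde N$ for the block-diagonal (in the row index $a$) matrix with entries $\widetilde N_{ai,bj}=(-1)^{j-1}\delta_{ab}/(i+j-1)!$, where for each fixed $a$ the indices $i,j$ run over $1,\dots,n_a$. Since $\widetilde N$ is block diagonal, $\det\widetilde N=\prod_{a=1}^k\det\widetilde N^{(a)}$ where $\widetilde N^{(a)}$ is the $n_a\times n_a$ matrix with entries $(-1)^{j-1}/(i+j-1)!$. Pulling the sign $(-1)^{j-1}$ out of column $j$ gives an overall factor $\prod_{j=1}^{n_a}(-1)^{j-1}=(-1)^{n_a(n_a-1)/2}$, so it remains to evaluate the determinant of the matrix $A^{(a)}$ with entries $A_{ij}=1/(i+j-1)!$, $1\le i,j\le n_a$.

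The plan is to recognize $A^{(a)}$ as a Hankel-type / Cauchy-like matrix whose determinant has a classical product formula. Writing $N:=n_a$, one clears denominators: multiply row $i$ by $(i+N-1)!$, so that the $(i,j)$ entry becomes $(i+N-1)!/(i+j-1)!=(i+j)(i+j+1)\cdots(i+N-1)$, a polynomial in $i$ of degree $N-j$; this exhibits $A^{(a)}$ (up to the known row factors $\prod_{i=1}^N (i+N-1)!^{-1}$) as a matrix whose $j$-th column is a degree-$(N-j)$ polynomial evaluated at $i=1,\dots,N$. A matrix of the form $\big(p_j(i)\big)_{i,j}$ with $\deg p_j = N-j$ and leading coefficients $c_j$ has determinant $\big(\prod_j c_j\big)\cdot\det\big(i^{N-j}\big)_{i,j}$, i.e. a Vandermonde determinant $\prod_{1\le i<i'\le N}(i'-i)=\prod_{i=1}^{N-1} i!$ times the product of leading coefficients. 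Here each polynomial $(i+j)\cdots(i+N-1)$ is monic, so $c_j=1$, and one gets $\det(\text{cleared matrix})=\prod_{i=1}^{N-1} i!$. Undoing the row multiplications yields $\det A^{(a)} = \prod_{i=1}^{N-1}i! \big/ \prod_{i=1}^{N}(i+N-1)! = \prod_{j=0}^{N-1} j! \big/ \prod_{j=N}^{2N-1} j!$.

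It then remains to track the sign $(-1)^{N(N-1)/2}$ introduced earlier and check it cancels (or is absorbed): in fact one should verify directly that $\det\widetilde N^{(a)}>0$. A clean way is to reorder the columns of $\widetilde N^{(a)}$ by the permutation $j\mapsto N+1-j$, which contributes a sign $(-1)^{N(N-1)/2}$ and turns the entry into $(-1)^{N-j}/(i+N-j)!$; alternatively, and more robustly, one observes that the Vandermonde/leading-coefficient computation above can be run directly on $\widetilde N^{(a)}$ (the columns are still polynomials in $i$ of the correct degrees, now with leading coefficients $\pm1$ whose product is exactly $(-1)^{N(N-1)/2}$), so the two sign contributions cancel and $\det\widetilde N^{(a)}=\prod_{j=0}^{n_a-1}j!\big/\prod_{j=n_a}^{2n_a-1}j!>0$. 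Multiplying over $a=1,\dots,k$ gives the claimed formula $C_\lambda=\det\widetilde N=\prod_{a=1}^k \big(\prod_{j=0}^{n_a-1}j!\big)\big/\big(\prod_{j=n_a}^{2n_a-1}j!\big)$, which is manifestly positive.

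The main obstacle is purely bookkeeping: correctly clearing the factorial denominators to expose the polynomial-column structure, and then carefully matching the two sources of signs so that they cancel and leave a positive product. No deep input is needed beyond the Vandermonde determinant and the elementary fact that row/column operations and the "leading coefficient $\times$ Vandermonde" reduction for polynomial columns are valid; the computation is elementary once the matrix is recognized as a specialization of a Cauchy/Hankel determinant. I would present it as the short lemma proof it is, deferring the explicit factorial manipulations to the appendix as the paper does.
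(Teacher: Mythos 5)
Your proof is correct, but it takes a genuinely different route from the paper's. The paper does not evaluate the Hankel-type determinant directly: it imports the explicit formula for $\widehat{N}^{-1}$ already computed in \cite{curvature} (Section 7.3 and Appendix G, as a product $\widehat{S}^{-1}\widehat{A}^{-1}$), applies Cramer's rule to the $(n,n)$ entry to obtain the recursion $\det\widehat{N}_{(n)}=\det\widehat{N}_{(n-1)}\,\tfrac{(n-1)!^2}{(2n-2)!\,(2n-1)!}$, and then induces on $n$ from the base case $\det\widehat{N}_{(1)}=1$. Your argument is instead self-contained: clearing the factorial denominators row by row exposes columns that are monic polynomials in $i$ of degrees $N-1,\dots,0$, so the determinant collapses to a (reversed) Vandermonde, giving $\prod_{j=0}^{N-1}j!\big/\prod_{j=N}^{2N-1}j!$ per block. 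What the paper's route buys is brevity given that $\widehat{N}^{-1}$ is needed anyway for the companion Lemma on $\tr(\widehat{N}^{-1}\widehat{G})$; what yours buys is independence from the external inverse formula and a transparent reason for the factorial product. One small caveat: in your second paragraph the identity $\det\bigl(i^{N-j}\bigr)_{i,j}=\prod_{1\le i<i'\le N}(i'-i)$ is off by the sign $(-1)^{N(N-1)/2}$ coming from the reversed column order (so your stated value of $\det A^{(a)}$ there should carry that sign --- e.g.\ for $N=2$ one has $\det A^{(2)}=-1/12$), but you correctly flag and resolve exactly this cancellation against the column-sign factor in your final paragraph, so the end result $\det\widetilde{N}^{(a)}=\prod_{j=0}^{n_a-1}j!\big/\prod_{j=n_a}^{2n_a-1}j!>0$ stands.
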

Since the matrix $\widetilde{N}$ is block-wise diagonal, to find the trace of $\widetilde{N}^{-1} G$ we just need the elements of $G$ with $a=b$. Thanks to \eqref{eq:ROmega}, that relates the curvature operator $\calR_\lambda$ with the elements of the matrix $R_{a1,b1}$, we have
$$\calR_{aa}=3 \frac{n_a}{4 n_a^2-1}R_{a1,a1}(0).$$
Moreover we have the following identity
\begin{equation}
\begin{split}
\tr\left(\widetilde{N}^{-1} G\right)&=\sum_{a=1}^k \left(\sum_{i,j=1}^{n_a}[\widetilde{N}^{-1}]_{ai,aj}\frac{(-1)^{j-1}}{(i+j+1)!}\right)R_{a1,a1}(0).%\\
%&=\sum_{a=1}^k\frac{1}{2}\frac{n_a}{4 n_a^2-1}R_{a1,a1}(0).
\end{split}
\label{eq:traceN}
\end{equation}
where we denoted by $[\widetilde{N}^{-1}]_{ai,bj}$ the $(ai,bj)$ entry of the matrix $\widetilde{N}^{-1}$. 
The proof of the statement is then reduced to the following lemma, whose proof is  postponed in Appendix \ref{s:appB}.
\begin{lemma}\label{th:conti1}
Let $\widehat{N}$ and $\widehat{G}$ be $n\times n$ matrices, whose elements are $\widehat{N}_{ij}=\frac{(-1)^{j-1}}{(i+j-1)!}$ and $\widehat{G}_{ij}=\frac{(-1)^{j-1}}{(i+j+1)!}$. Then 
\begin{equation}
\tr\left(\widehat{N}^{-1} \widehat{G}\right)=\frac{1}{2}\frac{n}{4 n^2-1}.
\label{eq:traceN2}
\end{equation}
\end{lemma}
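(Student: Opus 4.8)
The plan is to evaluate $\tr(\widehat N^{-1}\widehat G)$ by turning it into a pairing in a space of polynomials and computing that pairing through the (formal) orthogonal polynomials attached to the moment sequence $\mu_m:=\tfrac1{(m+1)!}$. \emph{Removing the signs and setting up a bilinear form.} First I would factor out the column signs: with $D=\diag(1,-1,\dots,(-1)^{n-1})$ and the symmetric Hankel matrices $A_{ij}=\tfrac1{(i+j-1)!}$, $B_{ij}=\tfrac1{(i+j+1)!}$, one has $\widehat N=AD$, $\widehat G=BD$, so, since $D^2=\bbI$,
\[
\tr(\widehat N^{-1}\widehat G)=\tr(D^{-1}A^{-1}BD)=\tr(A^{-1}B).
\]
Now view $A$ as the Gram matrix, in the basis $1,x,\dots,x^{n-1}$, of the symmetric bilinear form $\la x^a,x^b\ra:=\mu_{a+b}$ on polynomials; then $B$ is the Gram matrix, in the same basis, of $(f,g)\mapsto\la xf,xg\ra$, and the form is ``Hankel'', i.e.\ multiplication by $x$ is self-adjoint: $\la xf,g\ra=\la f,xg\ra$.

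\emph{Reduction to two terms.} Let $p_1,\dots,p_n$ be the basis of $\R[x]_{\le n-1}$ dual to $1,x,\dots,x^{n-1}$, that is $\la x^{j-1},p_k\ra=\delta_{jk}$; because $A=A^T$, the coefficient vectors of the $p_k$ are precisely the columns of $A^{-1}$. A short manipulation using $A=A^T$, $B=B^T$ and self-adjointness of $x$ gives
\[
\tr(A^{-1}B)=\sum_{k=1}^n\la p_k,x^{k-1}\ra_B=\sum_{k=1}^n\la xp_k,x^k\ra=\sum_{k=1}^n\la p_k,x^{k+1}\ra .
\]
For $k\le n-2$ the monomial $x^{k+1}$ lies in $\{1,\dots,x^{n-1}\}$, so by duality $\la p_k,x^{k+1}\ra=\delta_{k+2,k}=0$; hence only the last two terms survive,
\[
\tr(\widehat N^{-1}\widehat G)=\la p_{n-1},x^n\ra+\la p_n,x^{n+1}\ra .
\]

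\emph{Computing the two pairings via orthogonal polynomials.} Next I would pass to the monic formal orthogonal polynomials $\pi_j$ of $\la\cdot,\cdot\ra$ (they exist since the Hankel matrices $A^{(n')}$ are invertible, which follows from Lemma~\ref{th:conti}), with squared norms $h_j=\la\pi_j,\pi_j\ra$ and three-term recurrence $\pi_{j+1}=(x-b_j)\pi_j-c_j\pi_{j-1}$. Then $p_n=\pi_{n-1}/h_{n-1}$, and solving the two defining conditions for $p_{n-1}$ yields $p_{n-1}=h_{n-2}^{-1}\pi_{n-2}-\tfrac12 h_{n-1}^{-1}\pi_{n-1}$. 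The two facts I need about the recurrence are: $b_j=0$ for $j\ge1$ (a direct check), and, computing $h_{n-1}=\det A^{(n)}/\det A^{(n-1)}$ from the Hankel determinant furnished by Lemma~\ref{th:conti}, $h_{n-1}=(-1)^{n-1}\big((n-1)!\big)^2/\big((2n-2)!\,(2n-1)!\big)$, whence $c_j=h_j/h_{j-1}=-1/\big(4(4j^2-1)\big)$. Expanding $x^m$ in the basis $\{\pi_l\}$ and using $b_j=0$, the coefficient of $\pi_{m-1}$ equals $\tfrac12$ and that of $\pi_{m-2}$ equals $s_2(m)=\tfrac14+\sum_{j=1}^{m-1}c_j$; feeding this into the recurrence gives $\la p_n,x^{n+1}\ra=\tfrac14+\sum_{j=1}^n c_j$ and $\la p_{n-1},x^n\ra=s_2(n)-\tfrac14=\sum_{j=1}^{n-1}c_j$.

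\emph{Conclusion and the main obstacle.} Since $\tfrac1{4j^2-1}=\tfrac12\big(\tfrac1{2j-1}-\tfrac1{2j+1}\big)$, the sums telescope: $\sum_{j=1}^m c_j=-\tfrac{m}{4(2m+1)}$. Hence $\la p_n,x^{n+1}\ra=\tfrac{n+1}{4(2n+1)}$, $\la p_{n-1},x^n\ra=-\tfrac{n-1}{4(2n-1)}$, and
\[
\tr(\widehat N^{-1}\widehat G)=\frac{n+1}{4(2n+1)}-\frac{n-1}{4(2n-1)}=\frac{2n}{4(4n^2-1)}=\frac12\cdot\frac{n}{4n^2-1},
\]
which is \eqref{eq:traceN2} (the degenerate case $n=1$, and $n=2$ where $b_0=\tfrac12$ slightly changes the bookkeeping, are checked by hand). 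The hard part will be the orthogonal-polynomial step: pinning down the recurrence coefficients of the reciprocal-factorial moments — equivalently, the Jacobi continued fraction of $\sum_m\mu_m t^m=(e^t-1)/t$. The exact value $c_j=-1/(4(4j^2-1))$ (here extracted from Lemma~\ref{th:conti}) together with $b_j=0$ is precisely what forces the final telescoping, so making these identifications clean and rigorous is the real content of the argument; everything else is routine linear algebra.
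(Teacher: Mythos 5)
Your overall architecture is sound and, where it can be checked, correct: the sign-stripping conjugation giving $\tr(\widehat N^{-1}\widehat G)=\tr(A^{-1}B)$, the reduction to the two pairings $\la p_{n-1},x^n\ra+\la p_n,x^{n+1}\ra$ (which is exactly the paper's observation that $\widehat G_{ji}=\widehat N_{j(i+2)}$ for $i\le n-2$, so only the $i=n-1$ and $i=n$ diagonal entries survive), and the extraction of $h_{n-1}=(-1)^{n-1}((n-1)!)^2/((2n-2)!\,(2n-1)!)$, hence $c_j=-1/(4(4j^2-1))$, from the Hankel determinants of Lemma \ref{th:conti} are all legitimate. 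The two values $-\tfrac{n-1}{4(2n-1)}$ and $\tfrac{n+1}{4(2n+1)}$ you obtain are precisely the right-hand sides of the paper's identities \eqref{eq:6} and \eqref{eq:7}. Where you diverge from the paper is in how these two surviving terms are evaluated: the paper inverts $\widehat N$ explicitly via Schechter's formula for generalized Hilbert matrices (Lemma \ref{l:hilbert}) and proves two binomial identities, whereas you use the three-term recurrence and a telescoping sum.

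The genuine gap is the claim that $b_j=0$ for $j\ge1$, which you first call ``a direct check'' but then, correctly, identify at the end as ``the real content of the argument''. It is not a direct check: the Hankel determinants supplied by Lemma \ref{th:conti} determine the $h_j$ and hence the $c_j$, but they do not determine the $b_j$ (for that one also needs the shifted Hankel determinants), and there is no symmetry of the moment functional to invoke --- the recentred odd moments do not vanish (e.g.\ $\la (x-\tfrac12)^3,1\ra=\tfrac1{24}\neq 0$), so the vanishing of $b_j=\la x\pi_j,\pi_j\ra/h_j$ for every $j\ge1$ is an infinite family of nontrivial identities. The statement is true: via the standard correspondence between the three-term recurrence and the Jacobi continued fraction of $\sum_m\mu_mt^m=(e^t-1)/t$, it is equivalent to the evenness of $\tfrac{t}{e^t-1}+\tfrac t2=\tfrac t2\coth\tfrac t2$, i.e.\ to the vanishing of the odd Bernoulli numbers; but that equivalence and the resulting induction ($f_1$ even $\Rightarrow$ $b_1=0$ and $f_2$ even, and so on) must actually be written down. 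Without it, both the coefficient $\tfrac12$ of $\pi_{m-1}$ in $x^m$ and the recursion behind $s_2(m)=\tfrac14+\sum_{j=1}^{m-1}c_j$ are unproven, and the final telescoping collapses. Once that lemma is supplied, the rest of your argument goes through and yields a proof genuinely different from, and arguably more conceptual than, the paper's.
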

\end{proof}

%%%%%%%%%%%%%%%%%%%%%%%%%%%%%%%%%%%%%%%%%%%%%%%%%%%%%%%%%%%%%%%%%%%%%%%%%%%%%%%%%%%%%%%%%%%%%%%%%%%%%%%%%%%%%%%%%%%%%%%%%%%%%%%%%%%%%%

%%%%%%%%%%%%%%%%%%%%%%%%%%%%%%%%%%%%%%%%%%%%%%%%%%%%%%%%%%%%%%%%%%%%%%%%%%%%%%%%%%%%%%%%%%%%%%%%%%%%%%%%%%%%%%%%%%%%%%%%%%%%%%%%%%%%
\appendix
\section{Proof of Lemma \ref{th:conti}}\label{s:appA}

We compute the value of the leading constant $C_{\lambda}:=\det \widetilde{N}$. Recall that $\widetilde{N}$ is a block matrix, whose only non-vanishing blocks are the diagonal ones. Moreover, every $aa$-block of the diagonal is the matrix $\widehat{N}$ of dimension $n_a$. Thus, to find the determinant of $\widetilde{N}$, it is sufficient to evaluate the determinant of the generic matrix $\widehat{N}$ of dimension $n$ defined by
$$\widehat{N}_{ij}=\frac{(-1)^{j-1}}{(i+j-1)!}$$

The matrix $\widehat{N}$ has already been studied in \cite{curvature}, Section 7.3 and Appendix G, and its inverse can be expressed as a product of two matrices $\widehat{N}^{-1}_{ij}=\left(\widehat{S}^{-1}\widehat{A}^{-1}\right)_{ij}$, where 
\begin{gather}
\widehat{A}^{-1}_{ij}:=\frac{(-1)^{i-j}}{(i-j)!} \qquad i\geq j,\\
\widehat{S}^{-1}_{ij}:=\frac{1}{i+j-1}\binom{n+i-1}{i-1}\binom{n+j-1}{j-1}\frac{(n!)^2}{(n-i)!(n-j)!}.
\end{gather}
Therefore the inverse of $\widehat{N}$ is
\begin{equation}\label{eq:4a}
\widehat{N}^{-1}_{ij}=\sum_{k=j}^n\frac{(-1)^{k-j}}{(i+k-1)(k-j)!}{n+i-1\choose i-1}{n+k-1\choose k-1}\frac{(n!)^2}{(n-i)!(n-k)!}.
\end{equation}
By Cramer's rule one obtains
\begin{equation}\label{eq:ultima}
\widehat{N}^{-1}_{ij}=(-1)^{i+j}\frac{\det \widehat{N}^0_{ji}}{\det \widehat{N}},
\end{equation}
where $\widehat{N}^0_{ji}$ is the matrix of dimension $n-1$ obtained from $\widehat{N}$ by removing the $j$-th row and the $i$-th column. Applying \eqref{eq:ultima} for $i=j=n$ we reduce the computation of the determinant of the matrix $\widehat{N}$ of dimension $n$ as the product of the $(n,n)$-entry of the matrix $\widehat{N}^{-1}$ and the the determinant of the matrix $\widehat{N}$ of dimension $n-1$, namely we get the recursive formula:
$$\det \widehat{N}_{(n)}=\frac{\det \widehat{N}_{(n-1)}}{\widehat{N}^{-1}_{nn}}=\det \widehat{N}_{(n-1)} \frac{(n-1)!^2}{(2n-2)!(2n-1)!},$$
where the last equality follows from equation \eqref{eq:4a}. Using that, for $n=1$, the determinant is equal to $1$,  we obtain the general formula
\begin{equation}
\det \widehat{N}_{(n)}=\frac{\prod_{j=0}^{n-1}j!}{\prod_{j=n}^{2n-1}j!}.
\label{eq:detB}
\end{equation}
The value of the constant $C_\lambda$ is then obtained as the product
\begin{equation}
C_\lambda=\prod_{a=1}^k \det \widehat{N}_{(n_a)}=\prod_{a=1}^k \frac{\prod_{j=0}^{n_a-1}j!}{\prod_{j=n_a}^{2n_a-1}j!}.
%\label{eq:c0}
\end{equation}This concludes the proof of Lemma \ref{th:conti}.

\section{Proof of Lemma \ref{th:conti1}}\label{s:appB}
The goal of this appendix is to prove Lemma \ref{th:conti1}, i.e., the identity
\begin{gather}\label{eq:222}
\tr\left(\widehat{N}^{-1} \widehat{G}\right)=\sum_{i,j=1}^n \widehat{N}^{-1}_{ij}\widehat{G}_{ji}=\frac{1}{2}\frac{n}{4 n^2-1}.
\end{gather}
where we recall that the matrices  $\widehat{N}$ and $\widehat{G}$ are defined by
\begin{equation}
\widehat{N}_{ij}=\frac{(-1)^{j-1}}{(i+j-1)!},\qquad \widehat{G}_{ij}=\frac{(-1)^{j-1}}{(i+j+1)!}.
\end{equation}
Using the formula \eqref{eq:4a} for the expression of $\widehat{N}_{ij}^{-1}$, we are reduced to prove the following combinatorial identity
\begin{equation} 
\sum_{i=1}^{n}\sum_{j=1}^n\sum_{k=j}^n\frac{(-1)^{k-j}}{(i+k-1)(k-j)!}{n+i-1\choose i-1}{n+k-1\choose k-1}\frac{(n!)^2}{(n-i)!(n-k)!}\frac{(-1)^{i-1}}{(i+j+1)!}=\frac{1}{2}\frac{n}{4 n^2-1}.
\label{eq:traceN3}
\end{equation}
It is immediate to check that for $n=1$ the identity \eqref{eq:traceN3} is true. Then in what follows we assume that $n\geq 2$. 

Notice that for $i=1,\ldots,n-2,$ we have $\widehat{G}_{ji}=\widehat{N}_{j(i+2)}$ therefore this sum reduces to the sum of the components with $i=n-1$ and $i=n$:
\begin{align*}
\sum_{i,j=1}^n \widehat{N}^{-1}_{ij}\widehat{G}_{ji}&=\sum_{i=1}^{n-2}\sum_{j=1}^n \widehat{N}^{-1}_{ij}\widehat{G}_{ji}+\sum_{i=n-1}^{n}\sum_{j=1}^n \widehat{N}^{-1}_{ij}\widehat{G}_{ji}\\
&=\sum_{i=1}^{n-2}\sum_{j=1}^n \widehat{N}^{-1}_{ij}\widehat{N}_{j(i+2)}+\sum_{i=n-1}^{n}\sum_{j=1}^n \widehat{N}^{-1}_{ij}\widehat{G}_{ji}\\
&=\sum_{i=1}^{n-2} \delta_{i(i+2)}+\sum_{i=n-1}^{n}\sum_{j=1}^n \widehat{N}^{-1}_{ij}\widehat{G}_{ji}\\
&=\sum_{i=n-1}^{n}\sum_{j=1}^n \widehat{N}^{-1}_{ij}\widehat{G}_{ji},
\end{align*}
where $\delta_{ij}$ is the Kronecker symbol.
%Using the formula \eqref{eq:4a} for the expression of $\widehat{N}_{ij}^{-1}$, the sum in \eqref{eq:222} is then computed as follows
%\begin{equation}
%%\tr\left(\widehat{N}^{-1} \widehat{G}\right)=\\
%\sum_{i=n-1}^{n}\sum_{j=1}^n\sum_{k=j}^n\frac{(-1)^{k-j}}{(i+k-1)(k-j)!}{n+i-1\choose i-1}{n+k-1\choose k-1}\frac{(n!)^2}{(n-i)!(n-k)!}\frac{(-1)^{i-1}}{(i+j+1)!}.
%\label{eq:traceN3}
%\end{equation}
In particular, our initial claim \eqref{eq:traceN2}  follows by summing the next two combinatiorial identities, that are valid for $n\geq2 $:  
\begin{equation}
\sum_{j=1}^n\sum_{k=j}^n\frac{(-1)^{k-j+n}}{(n+k-2)(k-j)!}{2n-2\choose n-2}{n+k-1\choose k-1}\frac{(n!)^2}{(n-k)!(n+j)!}=-\frac{n-1}{4(2n-1)},
\label{eq:6}
\end{equation}
\begin{equation}
\sum_{j=1}^n\sum_{k=j}^n\frac{(-1)^{k-j+n+1}}{(n+k-1)(k-j)!}{2n-1\choose n-1}{n+k-1\choose k-1}\frac{(n!)^2}{(n-k)!(n+j+1)!}=\frac{n+1}{4(2n+1)}.
\label{eq:7}
\end{equation}
Before proving \eqref{eq:6} and \eqref{eq:7}, let us first simplify them. Using the following binomial identity, for $n\geq 2$ and $1\leq j\leq k\leq n$ 
$$\frac{1}{(k-j)!}{2n-2\choose n-2}\frac{(n!)^2}{(n-k)!(n+j)!}\frac{2(2n-1)}{n-1}={2n \choose n+k}{n+k\choose n+j},$$
one obtains that \eqref{eq:6} is equivalent to 
\begin{equation} \label{eq:cisiamo1}
\begin{split}
&\sum_{j=1}^n\sum_{k=j}^n\frac{(-1)^{n+k+j}}{n+k-2}{2n \choose n+k}{n+k\choose n+j}{n+k-1\choose k-1}=-\frac{1}{2}.
\end{split}
\end{equation}
\begin{lemma} \label{l:b0} For $k,n\geq 1$, one has the following combinatorial identity
\begin{equation}
\begin{split}
\sum_{j=1}^k(-1)^j{n+k\choose n+j}&=-{n+k-1 \choose k-1}.
\end{split}
%\label{eq:}
\end{equation}
\end{lemma}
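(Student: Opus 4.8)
The plan is to reduce the alternating partial sum to a complete alternating row sum (which vanishes) together with a partial sum having a standard closed form.

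First I would reindex the sum. Setting $\ell=n+j$, so that $\ell$ runs from $n+1$ to $n+k$ as $j$ runs from $1$ to $k$, and using $(-1)^j=(-1)^n(-1)^\ell$, one gets
\[
\sum_{j=1}^k(-1)^j{n+k \choose n+j}=(-1)^n\sum_{\ell=n+1}^{n+k}(-1)^\ell{n+k \choose \ell}.
\]
Next I would use the vanishing of the full alternating row sum $\sum_{\ell=0}^{n+k}(-1)^\ell\binom{n+k}{\ell}=0$ (valid since $n+k\geq1$) to replace the tail by minus the head, namely $\sum_{\ell=n+1}^{n+k}(-1)^\ell\binom{n+k}{\ell}=-\sum_{\ell=0}^{n}(-1)^\ell\binom{n+k}{\ell}$.

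Then I would invoke the classical partial-sum identity $\sum_{\ell=0}^{m}(-1)^\ell\binom{N}{\ell}=(-1)^m\binom{N-1}{m}$, applied with $N=n+k$ and $m=n$ (legitimate since $0\le n\le n+k$); this identity itself follows by telescoping after Pascal's rule, writing $(-1)^\ell\binom{N}{\ell}=(-1)^\ell\binom{N-1}{\ell}-(-1)^{\ell-1}\binom{N-1}{\ell-1}$. This yields $\sum_{\ell=0}^{n}(-1)^\ell\binom{n+k}{\ell}=(-1)^n\binom{n+k-1}{n}$, so the original sum equals $(-1)^n\cdot\bigl(-(-1)^n\bigr)\binom{n+k-1}{n}=-\binom{n+k-1}{n}=-\binom{n+k-1}{k-1}$, which is the assertion.

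There is essentially no serious obstacle: the argument is a short manipulation of binomial coefficients, and the only care needed is in tracking the signs and the index ranges through the reindexing step. As a sanity check, for $k=1$ the left-hand side is $-\binom{n+1}{n+1}=-1$ and the right-hand side is $-\binom{n}{0}=-1$, consistent with the formula.
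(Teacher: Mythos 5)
Your argument is correct and follows essentially the same route as the paper's proof: reindexing the sum, using the vanishing of the full alternating row sum to pass to the head partial sum, and then applying the identity $\sum_{\ell=0}^{n}(-1)^\ell\binom{n+k}{\ell}=(-1)^n\binom{n+k-1}{n}$ (which the paper establishes by induction and you establish by the equivalent telescoping via Pascal's rule). No gaps.
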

\begin{proof}[Proof of Lemma \ref{l:b0}] We have the following chain of identities
\begin{equation}
\begin{split}
\sum_{j=1}^k(-1)^j{n+k\choose n+j}&=(-1)^n\sum_{j=n+1}^{n+k}(-1)^j{n+k\choose j}\\
&=-(-1)^n\sum_{j=0}^{n}(-1)^j{n+k\choose j}
=-{n+k-1 \choose n}=-{n+k-1 \choose k-1},
\end{split}
%\label{eq:}
\end{equation}
where in the first one we used a change of variable $j\to j+n$ in the sum, while in the second one we used the general identity $0=(-1+1)^N=\sum_{j=0}^N(-1)^j{N\choose j}$. The last equality follows from the identity 
$$\sum_{j=0}^n (-1)^j{n+k\choose j}=(-1)^n{n+k-1\choose n},$$
that can be easily proved for every fixed $k$, by induction on $n\geq 1$.
\end{proof}
Applying Lemma \ref{l:b0} to the left hand side of \eqref{eq:cisiamo1}, we have
\begin{equation*}
\begin{split}
&\sum_{j=1}^n\sum_{k=j}^n\frac{(-1)^{n+k+j}}{n+k-2}{2n \choose n+k}{n+k\choose n+j}{n+k-1\choose k-1}=\\
&=\sum_{k=1}^n\left[\sum_{j=1}^k(-1)^j{n+k\choose n+j}\right]\frac{(-1)^{n+k}}{n+k-2}{2n \choose n+k}{n+k-1\choose k-1}\\
&=-\sum_{k=1}^n\frac{(-1)^{n+k}}{n+k-2}{2n \choose n-k}{n+k-1\choose k-1}^2.
\end{split}
\end{equation*}
Thus we have proved that  \eqref{eq:cisiamo1}, which is equivalent to \eqref{eq:6}, is also equivalent to
\begin{equation}
\sum_{k=1}^n\frac{(-1)^{n+k}}{n+k-2}{2n \choose n-k}{n+k-1\choose k-1}^2=\frac{1}{2}.
\label{eq:81}
\end{equation}
Performing analogous  transformations, one proves that \eqref{eq:7} is equivalent to
\begin{equation}
\sum_{k=1}^n\frac{(-1)^{n+k}}{(n+k)(n+k-1)}{2n+1 \choose n-k}{n+k\choose k-1}^2=\frac{1}{2}.
\label{eq:91}
\end{equation}
The proof is then completed thanks to the next lemma.
\begin{lemma} \label{l:b1} For $n\geq 2$, the following combinatorial identities hold
\begin{gather}\label{eq:8}
\sum_{k=1}^n\frac{(-1)^{n+k}}{n+k-2}{2n \choose n-k}{n+k-1\choose k-1}^2=\frac{1}{2},\\
\sum_{k=1}^n\frac{(-1)^{n+k}}{(n+k)(n+k-1)}{2n+1 \choose n-k}{n+k\choose k-1}^2=\frac{1}{2}.
\label{eq:9}
\end{gather}
\end{lemma}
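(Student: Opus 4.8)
Both identities in Lemma~\ref{l:b1} are terminating hypergeometric sums, so the natural engine is creative telescoping. For \eqref{eq:8} the plan is to set
\[
F(n,k):=\frac{(-1)^{n+k}}{n+k-2}\binom{2n}{n-k}\binom{n+k-1}{k-1}^2,\qquad S_1(n):=\sum_{k\in\mathbb{Z}}F(n,k),
\]
where $F$ is supported on $1\le k\le n$ because the double zero of $\binom{n+k-1}{k-1}^2$ at $k\le 0$ cancels the simple pole of $\tfrac1{n+k-2}$ at $k=2-n$, so that $S_1(n)$ is precisely the left-hand side of \eqref{eq:8}. Since $F(n+1,k)/F(n,k)$ and $F(n,k+1)/F(n,k)$ are explicit rational functions of $(n,k)$, one runs Gosper's algorithm on $F(n+1,k)-F(n,k)$ to obtain (if it succeeds) a rational certificate $R(n,k)$ with
\[
F(n+1,k)-F(n,k)=G(n,k+1)-G(n,k),\qquad G:=R\cdot F,
\]
and then, summing over $k$ and using that the boundary terms vanish, $S_1(n+1)=S_1(n)$ for $n\ge 2$; if Gosper fails, Zeilberger's algorithm produces instead a linear recurrence with polynomial coefficients annihilating $S_1$. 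In either case one checks that the constant $\tfrac12$ satisfies the resulting recurrence — a polynomial identity in $n$ — and verifies the base value $S_1(2)=\tfrac12$ directly. The identical scheme, with
\[
F(n,k):=\frac{(-1)^{n+k}}{(n+k)(n+k-1)}\binom{2n+1}{n-k}\binom{n+k}{k-1}^2,
\]
settles \eqref{eq:9}, using the base value $S_2(2)=\tfrac12$.

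A more self-contained alternative, closer to the elementary style of this appendix, is to reduce both identities to a one-parameter family. Using the relations $\binom{2n+1}{n-k}=\tfrac{2n+1}{n+k+1}\binom{2n}{n-k}$ and $\binom{n+k}{k-1}=\tfrac{n+k}{n+1}\binom{n+k-1}{k-1}$ together with the partial fraction $\tfrac{n+k}{(n+k-1)(n+k+1)}=\tfrac{1/2}{n+k-1}+\tfrac{1/2}{n+k+1}$, one rewrites the left-hand sides of \eqref{eq:8} and \eqref{eq:9} in terms of
\[
T_c(n):=\sum_{k=1}^n\frac{(-1)^{n+k}}{n+k+c}\binom{2n}{n-k}\binom{n+k-1}{k-1}^2 .
\]
Concretely \eqref{eq:8} is the case $c=-2$, and \eqref{eq:9} is equivalent to $T_{-1}(n)+T_{1}(n)=\tfrac{(n+1)^2}{2n+1}$. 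One is then led to prove $T_{-m}(n)=\tfrac1m$ for integers $1\le m\le n$ (and, separately, $T_{1}(n)=\tfrac{n^2}{2n+1}$); writing $\tfrac1{n+k+c}=\int_0^1 x^{n+k+c-1}\,dx$ turns $T_c(n)$ into $\int_0^1 x^{n+c-1}P_n(x)\,dx$ for a fixed polynomial $P_n(x)=\sum_{k=1}^n(-1)^{n+k}x^k\binom{2n}{n-k}\binom{n+k-1}{k-1}^2$ independent of $c$, so all these evaluations become moment identities for $P_n$ that can be checked by induction on $n$ using the Pascal recurrences for the two binomial factors, with a base case collapsing by Chu--Vandermonde — exactly in the spirit of the reduction from \eqref{eq:6}--\eqref{eq:7} to \eqref{eq:8}--\eqref{eq:9} via Lemma~\ref{l:b0}.

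The genuine obstacle in both routes is the non-polynomial weight $\tfrac1{n+k-2}$ (resp.\ $\tfrac1{(n+k)(n+k-1)}$): it places the summand outside the range of the classical closed-form evaluations (Chu--Vandermonde, Pfaff--Saalsch\"utz, Dixon), so one has to either carry an auxiliary parameter — the $c$ of $T_c$, or the integration variable $x$ — through the whole argument, or feed an augmented set of hypergeometric parameters into the telescoping machinery. Once that bookkeeping is fixed the rest is routine; I expect the creative-telescoping proof to be the shortest to make fully rigorous, at the cost of an opaque rational certificate, while the $T_c$/moment argument is the more transparent one and keeps the appendix self-contained.
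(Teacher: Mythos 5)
Your two routes are both plausible in principle, but as written neither is a proof: the hard content is deferred in each case. In the creative-telescoping route you never exhibit the certificate $R(n,k)$ or the Zeilberger recurrence; the argument rests entirely on the output of an algorithm that is hedged with ``if it succeeds'' and never run. (To be fair, the summands are proper hypergeometric --- the weight $\tfrac{1}{n+k-2}$ is $\Gamma(n+k-2)/\Gamma(n+k-1)$, and your observation that the double zero of $\binom{n+k-1}{k-1}^2$ kills the pole at $k=2-n$ correctly extends the summand by zero --- so Zeilberger is guaranteed to terminate and this route would close once the certificate is displayed. But a proof that says ``the algorithm will find something and $\tfrac12$ will satisfy it'' cannot be checked.) In the second route the algebraic reduction of \eqref{eq:9} to $T_{-1}(n)+T_1(n)=\tfrac{(n+1)^2}{2n+1}$ is correct (I verified the binomial and partial-fraction manipulations, and the values at $n=2$), but the actual content --- the evaluations $T_{-m}(n)=\tfrac1m$ and $T_1(n)=\tfrac{n^2}{2n+1}$ --- is asserted, not proved. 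The proposed induction ``using the Pascal recurrences for the two binomial factors'' is not worked out and is not obviously closable: the polynomial $P_n(x)$ does not satisfy any simple recurrence in $n$, because $\binom{2n}{n-k}$ and $\binom{n+k-1}{k-1}^2$ shift with $n$ in incompatible ways, so the inductive step is precisely where the difficulty of the lemma lives and it has been pushed there without resolution.

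For contrast, the paper's proof takes a third route that avoids both recurrences and auxiliary parameters: it recognizes $\beta_{k,n}$ (and $\gamma_{k,n}$) as, up to explicit elementary factors, entries of rows of the inverses of two generalized Hilbert matrices $H_{ij}=1/(a_i-b_j)$ --- the classical one with $a_i=i$, $b_j=-j+1$, and a perturbed one with a single node moved ($a_n=-n$, resp.\ $a_n=-n-1$) --- and then invokes Schechter's closed formulas for $(H^{-1})_{ij}$ and for the row sums $\sum_j(H^{-1})_{ij}$ (Lemma~\ref{l:hilbert}). Taking a suitable linear combination of the two row sums eliminates the unwanted factor of $(n\pm j)$ and produces $\tfrac12$ directly. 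If you want to salvage your approach, the shortest honest fix is to actually state the Zeilberger recurrence and certificate for each sum (or at least the first-order recurrence $S(n+1)=S(n)$ with its rational certificate); the $T_c$ route would additionally require a genuine proof of $T_{-m}(n)=\tfrac1m$, which is a harder statement than the two you set out to prove.
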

\begin{proof}[Proof of Lemma \ref{l:b1}]
Let us first prove  \eqref{eq:8}. Denote by 
$$\beta_{k,n}:=\frac{(-1)^{n+k}}{n+k-2}{2n \choose n-k}{n+k-1\choose k-1}^2,$$ the coefficient appearing in the sum  \eqref{eq:8}, we want to prove $\sum_{k=1}^{n}\beta_{k,n}=\frac12$.
To this aim, we apply Lemma \ref{l:hilbert} to two different matrices.

Let us first consider the Hilbert matrix $H_1:=\left[\frac{1}{i+j-1}\right]_{ij}$. It is a matrix of type \eqref{eq:10}, with $a_i:=i$ and $b_j:=-j+1$. We compute the coefficients of the $(n-1)$-th row of $H_1^{-1}$:
\begin{equation}
\begin{split}
(H_1^{-1})_{n-1,j}&=\frac{1}{-n+2-j}\frac{\prod_k(-n+2-k)(j+k-1)}{\prod_{k\neq j}(j-k)\prod_{l\neq n-1}(-n+2+l-1)}\\
&=\frac{n(n-1)^2}{2(2n-1)}\;(n+j)\;\frac{(-1)^{n+j+1}}{n+j-2}{2n \choose n-j}{n+j-1\choose j-1}^2\\
&=-\frac{n(n-1)^2}{2(2n-1)}\;(n+j)\;\beta_{j,n}
\end{split}
\label{eq:13}
\end{equation}
Then consider $H_2:=\left[\frac{1}{a_i-b_j}\right]$, with $a_i=i$ for $i<n$ and $a_n=-n$, while $b_j=-j+1$. We compute the coefficients of the $(n-1)$-th row of $H_2^{-1}$. For $j<n$ we have
\begin{equation}
\begin{split}
(H_2^{-1})_{n-1,j}&=\frac{1}{-n+2-j}\frac{\prod_{k\neq n}(-n+2-k)(-n+2+n)\prod_k(j+k-1)}{\prod_{k\neq j, k\neq n}(j-k) \;(n+j)\prod_{l\neq n-1}(-n+2+l-1)}\\
&=\frac{n(n-1)}{2(2n-1)}\;(n-j)\;\frac{(-1)^{n+j+1}}{n+j-2}{2n \choose n-j}{n+j-1\choose j-1}^2\\
&=-\frac{n(n-1)}{2(2n-1)}\;(n-j)\;\beta_{j,n},
\end{split}
\label{eq:14}
\end{equation}
while for $j=n$ we get
\begin{equation}
\begin{split}
(H_2^{-1})_{n-1,n}=&\frac{1}{-n+2+n}\frac{\prod_{k\neq n}(-n+2-k)(-n+2+n)\prod_k(-n+k-1)}{\prod_{k\neq n}(-n-k) \;(n+j)\prod_{l\neq n-1}(-n+2+l-1)}=\frac{n^2(n-1)}{2(2n-1)}.
\end{split}
\label{eq:15}
\end{equation}
%From equations \eqref{eq:13} and \eqref{eq:14} we can compute the sum over $j$ of $(n+j)\beta_{j,n}$ and $(n-j)\beta_{j,n}$. 
Setting 
$$\alpha_1:=\sum_{j=1}^n(H^{-1}_1)_{n-1,j},\qquad \alpha_2:=\sum_{j=1}^n(H^{-1}_2)_{n-1,j}$$
and summing over $j$ the identities  \eqref{eq:13} and \eqref{eq:14} one gets
$$\sum_{j=1}^n\beta_{j,n}=-\frac{1}{2n}\left[\frac{2(2n-1)}{n(n-1)^2}\alpha_1+\frac{2(2n-1)}{n(n-1)}\left(\alpha_2-\frac{n^2(n-1)}{2(2n-1)}\right)\right].$$
Now the proof of equation \eqref{eq:8} is completed once we use formula \eqref{eq:12} to find the values 
$$\alpha_1=-\frac{(2n-2)!}{(n-2)!^2},\qquad \alpha_2=\frac{2(2n-3)!}{(n-2)!^2}.$$
Equation \eqref{eq:9} can be proved along the same lines. More precisely, define
$$\gamma_{k,n}:=\frac{(-1)^{n+k}}{(n+k)(n+k-1)}{2n+1 \choose n-k}{n+k\choose k-1}^2$$ 
as the coefficients appearing in the sum \eqref{eq:9}, and consider the Hilbert matrix $H_1$, and the matrix $H_3$ obtained by $a_j=j$ if $j<n$ and $a_n=-n-1$, and $b_j=-j+1$. Then by Lemma \ref{l:hilbert}
$$(H_1^{-1})_{n,j}=(n+j+1)\,\frac{n(n+1)^2}{2(2n+1)}\gamma_{j,n}.$$
Moreover, for $j<n$
$$(H_3^{-1})_{n,j}=(n-j)\,\frac{n(n+1)^2}{(2n+1)(2n-1)}\gamma_{j,n},$$
while for $j=n$ $$(H_3^{-1})_{n,n}=-\frac{n(n+1)^2}{2(2n-1)}.$$
One can also compute 
$$\eta_1:=\sum_{j=1}^n(H^{-1}_1)_{n,j}=\frac{(2n-1)!}{(n-1)!^2},\qquad \eta_3:=\sum_{j=1}^n(H^{-1}_3)_{n,j}=-\frac{2(2n-2)!}{(n-1)!^2}.$$ 
Then the sum in \eqref{eq:9} is given by
\begin{equation}
\sum_{j=1}^n \gamma_{j,n}=\frac{1}{2n+1}\left[\frac{2(2n+1)}{n(n+1)^2}\eta_1+\frac{(2n+1)(2n-1)}{n(n+1)^2}\left(\eta_3+\frac{n(n+1)^2}{2(2n-1)}\right)\right]=\frac{1}{2}.
\end{equation}
that completes the proof of the lemma.
\end{proof}
The following lemma concerns the inverse of the generalized Hilbert matrix.
\begin{lemma}[see \cite{art:Schechter}] \label{l:hilbert}
Let $a_1,\ldots,a_n,b_1,\ldots,b_n$ be $2n$ distinct reals and define the
$n\times n$ matrix 
\begin{equation}
H_{ij}=\frac{1}{a_i-b_j}.
\label{eq:10}
\end{equation}
Then we have
\begin{equation}
(H^{-1})_{ij}=\frac{1}{b_i-a_j}\frac{\displaystyle \prod_{k=1}^n(b_i-a_k)(a_j-b_k)}{\displaystyle \prod_{k\neq j}(a_j-a_k)\prod_{l\neq i}(b_i-b_l)}, \qquad 
%\label{eq:11}
%\end{equation}
%\begin{equation}
\sum_{j=1}^n(H^{-1})_{ij}=-\frac{\displaystyle \prod_{k=1}^n(b_i-a_k)}{\displaystyle \prod_{k\neq i}(b_i-b_k)}.
\label{eq:12}
\end{equation}
%where , the inverse of $H$ and the sum of the coefficients on the $i$-th row can be expressed as follows (see)
\end{lemma}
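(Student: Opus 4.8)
The plan is a direct verification: the asserted identity is the classical inversion formula for a Cauchy matrix, and with the right bookkeeping tool it collapses to a one-line polynomial identity. Write $B$ for the matrix with entries
\[
B_{ij}:=\frac{1}{b_i-a_j}\,\frac{\prod_{l=1}^n(b_i-a_l)(a_j-b_l)}{\prod_{l\neq j}(a_j-a_l)\,\prod_{l\neq i}(b_i-b_l)},
\]
and the goal is to check $BH=\mathrm{Id}$, that is $\sum_{j=1}^n B_{ij}H_{jk}=\delta_{ik}$ for all $i,k$, and then to sum the rows of $B$.

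The only tool needed is the Lagrange interpolation identity: for pairwise distinct reals $c_1,\dots,c_n$ and any polynomial $P$ with $\deg P\le n-1$,
\[
\sum_{j=1}^n\frac{P(c_j)}{\prod_{l\neq j}(c_j-c_l)}=\bigl(\text{coefficient of }x^{n-1}\text{ in }P\bigr),
\]
obtained by comparing leading coefficients on the two sides of $P(x)=\sum_j P(c_j)\prod_{l\neq j}\frac{x-c_l}{c_j-c_l}$; in particular the sum vanishes when $\deg P\le n-2$. I would also use the elementary fact that the sum of the residues of a rational function decaying like $x^{-2}$ at infinity is zero.

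First I would compute $(BH)_{ik}$ by factoring out of the $j$-sum everything independent of $j$, which leaves
\[
(BH)_{ik}=\frac{\prod_l(b_i-a_l)}{\prod_{l\neq i}(b_i-b_l)}\ \sum_{j=1}^n\frac{\prod_l(a_j-b_l)}{(b_i-a_j)(a_j-b_k)\prod_{l\neq j}(a_j-a_l)}.
\]
When $i\neq k$ the factors $(a_j-b_i)$ and $(a_j-b_k)$ cancel against $\prod_l(a_j-b_l)$, the summand becomes $-\prod_{l\neq i,k}(a_j-b_l)\big/\prod_{l\neq j}(a_j-a_l)$, whose numerator has degree $n-2$ in $a_j$, so the interpolation identity forces the sum to vanish and $(BH)_{ik}=0$. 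When $i=k$ only one cancellation survives and the summand equals $\prod_{l\neq i}(a_j-b_l)\big/\bigl[(b_i-a_j)\prod_{l\neq j}(a_j-a_l)\bigr]$, which I would recognize as the residue at $x=a_j$ of $\varphi(x):=\prod_{l\neq i}(x-b_l)\big/\bigl[(b_i-x)\prod_l(x-a_l)\bigr]$; since $\varphi(x)=O(x^{-2})$, the sum of its residues is zero, and subtracting off the residue at $x=b_i$, which is $-\prod_{l\neq i}(b_i-b_l)/\prod_l(b_i-a_l)$, gives $(BH)_{ii}=1$. This proves $H^{-1}=B$. For the row sums the same factoring yields $\sum_j B_{ij}=\frac{\prod_l(b_i-a_l)}{\prod_{l\neq i}(b_i-b_l)}\sum_j\frac{\prod_l(a_j-b_l)}{(b_i-a_j)\prod_{l\neq j}(a_j-a_l)}$, and now $\prod_l(a_j-b_l)/(b_i-a_j)=-\prod_{l\neq i}(a_j-b_l)$ is a polynomial of degree $n-1$ in $a_j$ with leading coefficient $-1$, so by the interpolation identity the inner sum is $-1$ and $\sum_j B_{ij}=-\prod_l(b_i-a_l)/\prod_{l\neq i}(b_i-b_l)$, as claimed.

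The whole argument is elementary; the one place I would single out as the main point is the diagonal case $i=k$, where the relevant sum is no longer a plain instance of the polynomial interpolation identity and must instead be evaluated through the vanishing of the total residue of $\varphi$. Everything else is a matter of carefully tracking signs and which factors cancel in the products $\prod_l(a_j-b_l)$ and $\prod_l(b_i-a_l)$.
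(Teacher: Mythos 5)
Your proof is correct. Note that the paper itself offers no argument for this lemma at all: it is stated with a bare citation to the reference \cite{art:Schechter} on the inversion of generalized Hilbert (Cauchy) matrices, so there is no ``paper proof'' to compare against. Your direct verification that $BH=\mathrm{Id}$ is complete and self-contained: the off-diagonal case correctly reduces, after cancelling $(a_j-b_i)$ and $(a_j-b_k)$ against $\prod_l(a_j-b_l)$, to the Lagrange interpolation identity applied to a polynomial of degree $n-2$ in $a_j$; the diagonal case is rightly singled out as the one place where a plain interpolation argument does not apply, and your evaluation via the vanishing of the total residue of $\varphi(x)=\prod_{l\neq i}(x-b_l)\bigl/\bigl[(b_i-x)\prod_l(x-a_l)\bigr]$ (which is $O(x^{-2})$ at infinity with only simple poles, since the $a$'s and $b$'s are distinct) gives exactly $\sum_j\mathrm{Res}_{a_j}\varphi=-\mathrm{Res}_{b_i}\varphi=\prod_{l\neq i}(b_i-b_l)\big/\prod_l(b_i-a_l)$, hence $(BH)_{ii}=1$. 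The row-sum formula \eqref{eq:12} then follows from the same interpolation identity applied to the degree-$(n-1)$ polynomial $-\prod_{l\neq i}(a_j-b_l)$ with leading coefficient $-1$. Since $B$ and $H$ are square, $BH=\mathrm{Id}$ suffices to conclude $B=H^{-1}$. This is a clean, elementary replacement for the external citation.
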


\addtocontents{toc}{\protect\setcounter{tocdepth}{0}}
\section*{Acknowledgments} 
This research has been supported by the European Research Council, ERC StG 2009 ``GeCoMethods'', contract number 239748 and by the ANR project SRGI ``Sub-Riemannian Geometry and Interactions'', contract number ANR-15-CE40-0018.

%\bibliographystyle{abbrv}
%\bibliography{bibliografia}
%\end{document}
%

	\bibliographystyle{abbrv}
	\bibliography{bibliografia}		
\end{document}